\renewcommand{\H}{\operatorname{H}}
\DeclareMathOperator{\h}{\it h}
\newcommand{\C}{\mathbf{C}}
\newcommand{\N}{\mathbf{N}}
\DeclareMathOperator{\NS}{NS}
\DeclareMathOperator{\shhom}{\mathcal{H}\kern -.3ex\mathit{om}}
\DeclareMathOperator{\shann}{\mathcal{A}\kern -.05ex\mathit{nn}}
\DeclareMathOperator{\cliff}{Cliff}
\DeclareMathOperator{\Pic}{Pic}
\DeclareMathOperator{\Proj}{Proj}
\DeclareMathOperator{\Spec}{Spec}
\DeclareMathOperator{\im}{im}
\DeclareMathOperator{\Aut}{Aut}
\newcommand{\cond}[1]{\mathcal{C}_{#1}}
\newcommand{\curves}{\mathrm{Curves}}
\DeclareMathOperator{\ES}{ES}
\renewcommand{\subsetneq}{\varsubsetneq}
\newcommand{\ie}{i.e.\ }
\newcounter{memorise}
\def\Hom{\mathrm{Hom}}
\def\Z{\mathbf{Z}}
\def\P{\mathbf{P}}
\def\O{\mathcal{O}}
\def\cH{\mathcal{H}}
\def\X{\mathcal{X}}
\def\cC{\mathcal{C}}
\def\cD{\mathcal{D}}
\def\V{{V}}
\def\Proj{\mathrm{Proj}}
\def\separation{\medskip}
\renewcommand{\epsilon}{\varepsilon}
\newlength{\theorempostskipamount}
\newenvironment{theorem}[1][]
{\paragraph{\ Theorem.} {\normalfont #1} \it}
{\vspace{\the\theorempostskipamount}}
\newenvironment{lemma}[1][]
{\paragraph{\ Lemma.} {\normalfont #1} \it}
{\vspace{\the\theorempostskipamount}}
\newenvironment{proposition}[1][]
{\paragraph{\ Proposition.} {\normalfont #1} \it}
{\vspace{\the\theorempostskipamount}}
\newenvironment{definition}[1][]
{\paragraph{\ Definition.} {\normalfont #1} \it}
{\vspace{\the\theorempostskipamount}}
\newenvironment{corollary}[1][]
{\paragraph{\ Corollary.} {\normalfont #1} \it}
{\vspace{\the\theorempostskipamount}}
\newenvironment{problem}[1][]
{\paragraph{\ Problem.} {\normalfont #1} \it}
{\vspace{\the\theorempostskipamount}}
\newenvironment{conjecture}[1][]
{\paragraph{\ Conjecture.} {\normalfont #1} \it}
{\vspace{\the\theorempostskipamount}}
\newenvironment{remark}[1][]
{\paragraph{\ Remark.} {\normalfont #1}}
{\vspace{\the\theorempostskipamount}}
\newenvironment{example}[1][]
{\paragraph{\ Example.} {\normalfont #1}}
{\vspace{\the\theorempostskipamount}}
\newenvironment{examples}[1][]
{\paragraph{\ Examples.} {\normalfont #1}}
{\vspace{\the\theorempostskipamount}}
\newenvironment{warning}[1][]
{\paragraph{\ Warning.} {\normalfont #1}}
{\vspace{\the\theorempostskipamount}}
\newenvironment{proof}[1][Proof]{\noindent \textit{#1.~}}
{\hfill $\Box$ \\ }
\newcommand{\qed}{\hfill  $\Box$\separation}
\def\@removefromreset#1#2{\let\@tempb\@elt
   \def\@tempa#1{@&#1}\expandafter\let\csname @*#1*\endcsname\@tempa
   \def\@elt##1{\expandafter\ifx\csname @*##1*\endcsname\@tempa\else
         \noexpand\@elt{##1}\fi}%
   \expandafter\edef\csname cl@#2\endcsname{\csname cl@#2\endcsname}%
   \let\@elt\@tempb
   \expandafter\let\csname @*#1*\endcsname\@undefined}
\titleformat{\section}[hang]{\normalfont\Large\bfseries}{}{0cm}%
{\thesection \  --\ }
\titleformat{\subsection}[hang]{\normalfont\large\bfseries}{}{0cm}%
{\thesubsection\ -- \,}
\titleformat{\subsubsection}[hang]{\normalfont\bfseries}{}{0cm}{}
\renewcommand{\theparagraph}{(\arabic{section}.\arabic{paragraph})}
\titleformat{\paragraph}[runin]{\normalfont\bfseries}
{\theparagraph}{0cm}{}
\titlespacing{\paragraph}{0cm}
{2.75ex plus 1ex minus .2ex}
{.6em}
\renewcommand{\thesubparagraph}
{(\arabic{section}.\arabic{paragraph}.\arabic{subparagraph})}
\titleformat{\subparagraph}[runin]{\normalfont}
{\thesubparagraph}{0cm}{}
\titlespacing{\subparagraph}{0cm}
{0mm}
{.3em}
\newcommand{\percentfill}{%
\leavevmode \cleaders \hb@xt@ .80em{\hss \%\hss }\hfill \kern \z@
}
\newenvironment{introduction}
{\titleformat{\section}[hang]{\normalfont\Large\bfseries}{}{0cm}{}
\renewcommand{\theparagraph}{(\Alph{paragraph})}
\renewcommand{\thesubparagraph}{(\Alph{paragraph}.\arabic{subparagraph})}

\section*{Introduction}}
{}
\newenvironment{closing}
{\titleformat{\section}[hang]{\normalfont\large\bfseries}{}{0cm}{}
\setlength{\itemsep}{0mm}
\small}
{}
\renewcommand\@maketitle{%
  \newpage
  \begin{center}%
  \let \footnote \thanks
    {\Large \bf \@title \par}%
    \vskip 1em%
    {\large
      \begin{tabular}[t]{c}%
        \@author
      \end{tabular}\par}%
  \end{center}%
  \par
  \vskip 1.5em}
\begin{document}
\renewcommand{\O}{\mathcal{O}}
\excludecomment{complimentary-computation}

\title{Equigeneric and equisingular families of curves on surfaces}
\author{T. 
Dedieu -- E. 
Sernesi}
\date{}
\maketitle

\begin{abstract}
We investigate the following question:
let $C$ be an integral curve contained in a smooth complex algebraic
surface $X$; 
is it possible to deform $C$ in $X$ into a nodal curve 
while preserving its geometric genus?

We affirmatively answer it in most cases when $X$ is a Del Pezzo
or Hirzebruch surface (this is due to Arbarello and Cornalba, Zariski,
and Harris), and in some cases when $X$ is a $K3$ surface.
Partial results are given for all surfaces with numerically trivial
canonical class.
We also give various examples for which the answer is negative.
%
\end{abstract}

\vskip 1cm
\hfill
\begin{minipage}[t]{8.5cm}
\small
Lascia lente le briglie del tuo ippogrifo, o Astolfo,\\
e sfrena il tuo volo dove pi\`u ferve l'opera dell'uomo.\\
Per\`o non ingannarmi con false immagini\\
ma lascia che io veda la verit\`a\\
e possa poi toccare il giusto.

\smallskip
\hfill
\parbox[t]{7cm}
{---\,Banco del mutuo soccorso, 
freely inspired by \emph{Orlando furioso}}
\end{minipage}

\begin{introduction}
Historically, the study of families of nodal irreducible plane
curves 
(the so--called \emph{Severi varieties}, named after 
\cite{Sev})
was motivated by the fact that every smooth projective curve is
birational to such a plane curve,
and that plane curves should be easier to study since they are
divisors.
One can of course consider similar families of curves in any smooth
algebraic surface and, as it has turned out, their study is rewarding
whether one is interested in surfaces or in curves.

Let $X$ be a smooth algebraic surface, and $\xi$ an element of its 
Néron--Severi group. For $\delta \in \Z _{\geq 0}$, we denote 
by $\V ^{\xi, \delta}$ the family of integral curves in $X$ of
class $\xi$, whose singular locus consists of exactly $\delta$
nodes
(\ie $\delta$ ordinary double points;
we call such curves \emph{nodal}, or \emph{$\delta$--nodal}).
These families are quite convenient to work with, being fairly 
well--understood from a deformation--theoretic point of view.
For instance, when the canonical class $K_X$ is non--positive this
enables one to show that they are smooth of the expected dimension in
the usual cases
(when $K_X$ is positive however, they tend to behave more wildly, 
see, e.g., \cite{CC99,ChS97}).
Moreover, they have been given a functorial definition 
in \cite{jW74b} (see also \cite[\S 4.7.2]{eS06}).

Yet, there is no definitive reason why one should restrict one's
attention to curves having this particular kind of singularities
(even when $X$ is the projective plane),
and it seems much more natural from a modular point of view to
consider the families $\V ^\xi _g$, $g \in \Z _{\geq 0}$, of integral
curves in $X$ of class $\xi$ that have geometric genus $g$
(\ie the normalizations of which have genus $g$).
We call these families \emph{equigeneric}.
These objects have however various drawbacks, for instance their
definition only makes sense set--theoretically, and 
accordingly there is no such thing as a local equigeneric deformation
functor (\ie one that would describe equigeneric deformations over
an Artinian base).

It is a fact that every irreducible equigeneric family $V$ of curves in
$X$ contains a Zariski open subset, all members of which have the same
kind of singularities (families enjoying the latter property are called
\emph{equisingular}), 
and these singularities determine via their deformation theory
the codimension $V$ is expected to have in the universal
family of all class $\xi$ curves in $X$.
This expected codimension is the lowest possible when the general
member of $V$ is nodal 
(in such a case, the expected codimension equals the number of nodes,
which itself equals the difference between the arithmetic and
geometric genera of members of $V$),
so that it makes sense to consider the following.

\begin{problem}
\label{Pb:main}
Let $C$ be an integral curve in $X$. 
Is it possible to deform $C$ in $X$ into a nodal curve 
while preserving its geometric genus?
\end{problem}

One may rephrase this as follows: 
let $\xi$ be the class of $C$ in $\NS(X)$,
$p_a(\xi)$ the arithmetic genus of curves having class $\xi$,
$g$ the geometric genus of $C$,
and $\delta = p_a(\xi) - g$;
is $\V ^\xi _g$ contained in the Zariski closure of $\V
^{\xi,\delta}$?
Observe that whenever the answer is affirmative, 
the Severi varieties $\V ^{\xi,\delta}$ provide a consistent way of 
understanding the equigeneric families $\V ^\xi _g$.

In any event, it is a natural question to ask what kind of
singularities does the general member of a given family $\V ^\xi _g$ 
have
(besides, this question is important for enumerative geometry,
see \cite{beauville-counting,FGS99,KS13}). 
Closely related to this is the problem of determining whether a given
equisingular family has the expected dimension. The actual dimension
is always greater or equal to the expected dimension, and whenever
they differ the family is said to be \emph{superabundant}.

In this text, we provide an answer to various instances of 
Problem~\ref{Pb:main}.
Some of these answers are not new, see below for
details and proper attributions.

\begin{theorem}\ 
\label{T:main}

\smallskip \subparagraph{}
{\normalfont (Arbarello--Cornalba \cite{AC80,AC81}, Zariski \cite{oZ82})}
\label{T:P^2}
Let $X=\P^2$ and $L=\O _{\P^2} (1) \in \Pic X = \NS (X)$.
For integers $n \geq 1$ and $0 \leq g \leq p_a(nL)$,
the general element of every irreducible component of $\V ^{nL} _g$
is a nodal curve.

\smallskip \subparagraph{}
{\normalfont (Harris \cite{jH86})}
\label{T:F_n}
Let $X$ be a degree $d$ Hirzebruch surface.
For every effective class $L \in \Pic X = \NS (X)$ and integer
$0 \le g \le p_a(L)$, 
the general member of every irreducible component of
$V ^L_{g}$ is a nodal curve.

\smallskip \subparagraph{}
{\normalfont (Harris \cite{jH86})}
\label{T:DP}
Let $X$ be a degree $d$ Del Pezzo surface, and 
$K_X \in \Pic X = \NS (X)$ its canonical class.
For integers $n\geq 1$ and $0 \leq g \leq p_a(-nK_X)$, 
the general element of every irreducible component of 
$\V ^{-nK_X} _{g}$ is nodal unless $dn \leq 3$
(it is at any rate immersed 
unless $d=n=1$ and $g=0$).

\smallskip \subparagraph{}
\label{T:K3}
Let $X$ be a very general algebraic $K3$ surface,
$L$ the positive generator of $\Pic X = \NS (X)$,
and write $L^2 = 2p-2$.
For $p/2 < g \leq p_a(L)=p$, 
the general element of every irreducible component of
$V ^L _g$ is nodal.

For integers $k \geq 1$ and $0 < g \leq p_a(kL)$, 
the general element of every irreducible component of
$\V ^{kL} _g$ is immersed;
if its normalization is non--trigonal\,\footnote
{When $k=1$, \cite{CK14} provides a sufficient
condition for a general element of $V^L _g$ to have a non--trigonal
normalization (see Corollary~\ref{coro:CK-nontrigonal}).},
then it is actually nodal.

\smallskip \subparagraph{}
\label{T:Enriques}
Let $X$ be an Enriques surface, and $L \in \Pic X = \NS(X)$ an
effective class.
For $3 \leq g \leq p_a(L)$, if $[C]\in \V ^L _{g}$ has
a non--hyperelliptic normalization $\bar{C}$,
then the general element of every component of $\V ^L _{g}$ 
containing $C$ is immersed. If moreover
$\bar C$ has Clifford index $\ge 5$, 
then $C$ is nodal.

\smallskip \subparagraph{}
\label{T:Abelian}
Let $X$ be an Abelian surface and $\xi \in \NS (X)$.
For $2 < g \leq p_a(\xi)$,
the general element of every irreducible component of
$\V ^{\xi} _g$ is immersed;
if its normalization is non--trigonal, then
it is actually nodal.
\end{theorem}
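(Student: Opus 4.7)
The plan is to mirror the argument for the $K3$ case (Theorem~\ref{T:K3}), exploiting the common feature $K_X\equiv 0$.

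\textbf{Setup.} Let $V$ be an irreducible component of $\V^\xi_g$, pick a general $[C]\in V$, and let $f\colon\bar C\to X$ denote the composition of the normalization $\bar C\to C$ with $C\hookrightarrow X$. From the exact sequence
\[0\to T_{\bar C}\to f^*T_X\to N_f\to 0\]
and $\det f^*T_X\cong\O_{\bar C}$ one obtains $\det N_f\cong\omega_{\bar C}$. Write $\tau\subset N_f$ for the torsion subsheaf, $m=\mathrm{length}(\tau)$, and $\tilde N_f:=N_f/\tau$; then $\tilde N_f$ is a line bundle of degree $2g-2-m$, and $f$ is an immersion precisely when $m=0$.

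\textbf{Immersedness.} The standard deformation-theoretic analysis of equigeneric families supplies an upper bound $\dim V\le h^0(\bar C,\tilde N_f)$, while $K_X\equiv 0$ gives the expected lower bound $\dim V\ge g$. If $m\ge 1$, then $\tilde N_f$ is a special line bundle of degree $\le 2g-3$, and Clifford's theorem yields
\[g\le h^0(\tilde N_f)\le\tfrac{1}{2}(2g-2-m)+1=g-\tfrac{m}{2},\]
a contradiction, forcing $m=0$. The hypothesis $g>2$ is required to accommodate the $+2$ contribution coming from the two-dimensional translation action of $X$ on itself --- which produces canonical sections of $N_f$ via the trivialization of $T_X$ --- and to ensure these are absorbed into $h^0(\tilde N_f)$ rather than producing a spurious $+2$ excess; the low-genus cases may be disposed of separately, since elliptic subtori ($g=1$) and theta divisors ($g=2$) are automatically smoothly embedded.

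\textbf{Nodality under non-trigonality.} Once $f$ is an immersion, the remaining non-nodal singularities of $C$ come from two branches meeting with coincident tangents (tacnode or higher tangency) or from three or more branches meeting at a common point. By a standard argument on the characteristic linear series of $f$ --- parallel to the one used in the $K3$ setting of Theorem~\ref{T:K3} --- the preimages in $\bar C$ of such a non-nodal singularity, together with their tangency or collinearity data, produce a pencil of degree at most~$3$ on $\bar C$, for instance via the factorization $\bar C\to J(\bar C)\to X$ through the Albanese and subtraction on $J(\bar C)$. Non-trigonality of $\bar C$ excludes all such pencils, so every singularity of $C$ must be an ordinary node.

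\textbf{Main obstacle.} The hardest step is the fine-tuning of the upper bound $\dim V\le h^0(\tilde N_f)$: the translation-induced sections of $N_f$ are global and canonical, so one must show they do not produce a $+2$ surplus in the naive dimension count that would defeat Clifford. This requires the precise interplay between the intrinsic moduli of $\bar C$ and the extrinsic translations of $X$, which the hypothesis $g>2$ just barely accommodates.
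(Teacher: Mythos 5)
Your proposal addresses only one of the six assertions of the theorem, namely the Abelian case \ref{T:Abelian}, and it takes \ref{T:K3} as given. The cases of $\P^2$, Hirzebruch and Del Pezzo surfaces are handled in the paper by a different (and easier) mechanism --- Corollary~\ref{C:nocusp1}, i.e.\ checking that $\deg\bigl(\omega_{\bar C}\otimes\varphi^*\omega_X^{-1}\bigr)=2g-2-K_X\cdot C$ is large enough for non-speciality, global generation and separation of three points --- and the Enriques case needs the specific facts that a Prym-canonical sheaf on a non-hyperelliptic curve is globally generated and that Clifford index $\geq 5$ kills trisecants. None of this is touched by your argument, so as a proof of the stated theorem the proposal is incomplete.

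Even restricted to the Abelian case there are two genuine gaps. First, the lower bound $\dim V\ge g$ does not ``follow from $K_X\equiv 0$'': the deformation-theoretic bound \eqref{E:mor3} only gives $\chi(N_f)=-K_X\cdot C-(1-g)=g-1$, and $g-1\le h^0(\tilde N_f)\le g-m/2$ only yields $m\le 2$, which does not force immersedness. The paper closes this gap by Proposition~\ref{P:estim} (deforming $X$ to a non-algebraic torus, or equivalently using the fibration of $\curves^\xi_X$ over $\widehat X$ together with the translation action); your discussion of the ``$+2$ from translations'' gestures at this but treats it as a nuisance rather than supplying the missing inequality. Second, the nodality step is not justified: the link between a non-nodal immersed singularity and a $g^1_3$ is not ``subtraction on the Albanese'', but the failure of $\omega_{\bar C}=N_\varphi$ to separate a length-$3$ scheme supported over the singular point (equivalently, by Riemann--Roch, $h^0(A)\ge 2$ for some effective $A$ of degree $3$), and one must then actually produce, from a section of $N_\varphi$ separating those points, a deformation of $C$ inside $V$ that is not equisingular --- this is the content of part \ref{T:nocuspII} of Theorem~\ref{T:nocusp} and is the step your ``standard argument on the characteristic linear series'' elides. (Your Clifford-theorem derivation of immersedness, granting $\dim V\ge g$ and the upper bound $\dim V\le h^0(\tilde N_f)$ of Lemma~\ref{L:AC}, is a correct variant of the paper's use of global generation of $\omega_{\bar C}$; and your claim that genus-$2$ curves on an Abelian surface are smoothly embedded is false, cf.\ Example~\ref{Ex:jacobian}, though this is harmless since the theorem assumes $g>2$.)
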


(Here a curve is said to be \emph{immersed} if the differential of its
normalization morphism is everywhere injective.)

In all cases within the above Theorem~\ref{T:main}, the corresponding 
Severi varieties
$\V ^{\xi, p_a(\xi)-g}$ are smooth and of the expected dimension
(if non empty;
non--emptiness is also known, except for Enriques and 
Abelian\,\footnote
{After the present text was completed, 
Knutsen, Lelli-Chiesa and Mongardi (arXiv:1503.04465)
proved the non-emptiness of $V^{\xi,p_a(\xi)-g}$ for
$\xi$ the numerical class of a polarization of type $(1,n)$ on an
Abelian surface, 
and $2 \leq g \leq p_a(\xi)$.}
surfaces).
In addition, their irreducibility has been proven in the following
cases: 
when $X$ is the projective plane \cite{jH86,harris-morrison},
when $X$ is a Hirzebruch surface \cite{tyomkin},
and when $X$ is a Del Pezzo surface, $g=0$, and $(d,n) \neq (1,1)$
\cite{testa};
when $X$ is a $K3$ surface, only a particular case is known
\cite{CD}.
These irreducibility properties transfer to the corresponding
equigeneric families when Problem~\ref{Pb:main} admits a positive
answer.

For surfaces with trivial canonical class one can formulate the
following conjecture, which Theorem~\ref{T:main} only partly solves.

\begin{conjecture}
\label{C:K3}
Let $X$ be a $K3$ (resp. Abelian) surface, 
and $\xi \in \NS (X)$.
For $g>0$ (resp. $g>2$) 
the general element of every irreducible component of
$\V ^{\xi} _g$ is nodal.
\end{conjecture}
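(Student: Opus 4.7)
The plan is to leverage Theorem~\ref{T:main}(\ref{T:K3},\ref{T:Abelian}), which already asserts that the general element of every component of $V^\xi_g$ is \emph{immersed} (under the relevant hypotheses on $g$), and is \emph{nodal} whenever its normalization is non--trigonal. What remains therefore divides into two independent tasks: (i)~eliminating the trigonal exception, and (ii)~for K3, extending from a very general $(X,L)$ with $\xi=kL$ to an arbitrary polarized K3 surface and an arbitrary class $\xi \in \NS(X)$.

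For (ii), a one--parameter specialization argument is the natural tool. I would embed $(X,\xi)$ in a smooth family $\X \to T$ of polarized K3 surfaces whose generic fibre $(X_t,\xi_t)$ has Picard lattice of rank one generated by $\xi_t$, and propagate a given component $W \subset V^\xi_g$ to the generic fibre via the relative Hilbert scheme of $\X$ together with semi--continuity of the geometric genus. Theorem~\ref{T:main}(\ref{T:K3}) then applies to the generic fibre, and since immersedness and nodality are open conditions within an equigeneric family, the conclusion specializes back to $W$ on $X$.

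For (i), the natural device is the Lazarsfeld--Mukai construction: a base--point--free $g^1_3$ on the normalization $\bar C \to C$ pulls back to a rank--two sheaf $E$ on $X$ with $\det E = \O_X(C)$ and a section vanishing along a curve birational to the union of the fibres of the $g^1_3$. Stability analysis via Mukai's classification on K3 surfaces (and its counterpart for sheaves on an Abelian surface) produces an auxiliary divisor class $D$ on $X$ with $D \cdot \xi$ small: either an elliptic pencil cutting out the $g^1_3$ on $C$, or, on K3, a smooth rational curve. Moving $C$ within $|D|$ or along first--order deformations preserving $D$ then furnishes additional equigeneric deformation directions transverse to those used to prove immersedness, and these should smooth any residual non--nodal singularity while preserving $g$, contradicting the hypothesis that the general member of $W$ is not nodal.

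The hard part is clearly step (i). Because equigeneric families carry no infinitesimal deformation functor (as emphasized in the introduction), one cannot close the argument by an abstract obstruction calculation; one must instead exhibit explicit global deformations within $V^\xi_g$ whose members are nodal, and this requires a careful geometric construction starting from $D$. When the Lazarsfeld--Mukai bundle is strictly semistable, or when $D$ meets $C$ uncooperatively along its singular locus, the construction becomes delicate and calls for a case--by--case analysis of the singularities of triple covers. The Abelian setting is harder still, since one cannot exploit rational curves and must rely entirely on the elliptic pencils produced by the construction; this is where the proof most likely requires genuinely new ingredients.
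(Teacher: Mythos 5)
This statement is Conjecture~\ref{C:K3}: the paper does not prove it, and explicitly presents it as an open problem that Theorem~\ref{T:main} ``only partly solves.'' So there is no proof in the paper to compare yours against, and what you have written is a research programme rather than a proof --- as you yourself concede when you say the Abelian case ``most likely requires genuinely new ingredients.'' Concretely, both halves of your plan have gaps. In step~(ii), the specialization runs the wrong way: given an arbitrary $(X,\xi)$ and a component $W \subset \V^\xi_g$, there is no reason the curves parametrized by $W$ deform to the very general fibre of your family $\X \to T$ (they may be obstructed, or $W$ may simply not be the specialization of any component on the generic fibre), and even if they did, nodality of the general member upstairs does not descend: under specialization nodes can collide into worse singularities, and ``nodal'' is not a closed condition along an equigeneric degeneration. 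Moreover the input to your specialization is itself incomplete, since on the very general fibre Theorem~\ref{T:K3} gives nodality only for $g > p/2$ with $k=1$, or under the non--trigonality hypothesis.

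In step~(i), the Lazarsfeld--Mukai construction explains \emph{why} a trigonal normalization obstructs condition~\ref{c:3ample}, but it does not by itself produce the equigeneric deformations you need. The paper's Warning~\ref{warn} and Example~\ref{Ex:jacobian} show exactly where such arguments break: one can have $\H^0(I_C \otimes \O_C(C)) = \H^0(A_C \otimes \O_C(C))$ with $I_C \subsetneq A_C$, so that no first--order equigeneric direction separates the non--nodal singularity, even though $\nu^*(A_C\otimes\O_C(C))$ is globally generated. Your proposal offers no mechanism to exclude this. By contrast, the partial result the paper does prove in this direction (Proposition~\ref{P:K3-gnlzddivs}, for $g>p/2$ on a very general primitively polarized $K3$) proceeds by working with generalized divisors directly on the singular curve $C$ and bounding $\h^0(C, I_C\otimes\O_C(C))$ via the Clifford index of $C$ (using the results of Ballico--Fontanari--Tasin and Green--Lazarsfeld); this is a complete argument for that range, but it is genuinely different from, and narrower than, what you propose.
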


Note however that Problem~\ref{Pb:main} does not always have a
positive solution.
This happens for instance when $X$ is a $K3$ (resp. Abelian) surface
and $g=0$ (resp. $g=2$); the latter case is however somewhat
exceptional, since the corresponding equigeneric families are
$0$--dimensional (see subsection~\ref{s:app-triv} for further
discussion). 
We give other instances, hopefully less exceptional,
of Problem~\ref{Pb:main} having a negative
solution in Section~\ref{S:museum}.
This comes with various examples, some of them new, of equigeneric and
equisingular families having superabundant behaviour.

 Surfaces of general type are missing from our analysis, as  their Severi varieties are notably not well-behaved and, especially, not keen to be studied using the techniques of the present text.  For information about this case one may consult \cite{CC99,ChS97}.

\bigskip
Problem~\ref{Pb:main} was first studied (and solved) for the
projective plane 
in the (19)80s by Arbarello and Cornalba \cite{AC80,AC81}
(see also \cite[Chap.~XXI \S\S8--10]{ACGII} for a unified treatment in
english),
and Zariski \cite{oZ82},
with different approaches.
The latter considers curves in surfaces as divisors and studies
the deformations of their equations
(we call this the \emph{Cartesian} point of view),
while the former see them as images of maps from smooth curves
(we call this the \emph{parametric} point of view).
Harris generalized this result using the Cartesian
theory in \cite{jH86},
thus obtaining as particular cases parts \ref{T:F_n} and \ref{T:DP} of
the above theorem.
There is however a subtle flaw in this text  (\cite[Prop.~2.1]{jH86})
 which  has been subsequently worked around using the parametric theory in
  \cite{harris-morrison}.
Apparently it had not been spotted before;
we   analyze it    in detail in subsection~\ref{s:pull-back}.


Note also that \cite[Lemma~3.1]{chen99} states 
Conjecture~\ref{C:K3} for $K3$ surfaces as a result,
but the proof reproduces the incomplete argument of
\cite[Prop.~2.1]{jH86};
unfortunately, in this case the parametric approach does not provide a
full proof either.
We also point out that the result of Conjecture~\ref{C:K3} for $K3$
surfaces is used in 
\cite[proof of Thm.~3.5]{bryan-leung}; the weaker 
Theorem~\ref{T:K3} should however be enough for this proof, see
\cite{FGS99,beauville-counting}.

Eventually let us mention that the recent \cite{KS13} by Kleiman and Shende provides an answer
to Problem~\ref{Pb:main} 
for rational surfaces under various conditions. They use the cartesian approach, while in the Appendix Tyomkin reproves the same results using the parametric approach. 

We need arguments from both the parametric and Cartesian approaches here.
The core of the parametric theory in the present text is
Theorem~\ref{T:nocusp}, 
which is essentially due to Arbarello--Cornalba, Harris, and
Harris--Morrison.
Except for its part \ref{T:K3}, Theorem~\ref{T:main} is a more or less
direct corollary of Theorem~\ref{T:nocusp};
parts \ref{T:Enriques} and \ref{T:Abelian}, which to the best of our
knowledge appear here for the first time\,\footnote
{Theorem~\ref{T:Abelian} has later on been used by
Knutsen, Lelli-Chiesa and Mongardi (arXiv:1503.04465)
to prove Conjecture~\ref{C:K3} for $X$ an Abelian surfaces and $\xi$
the numerical class of a polarization of type $(1,n)$.},
still require additional arguments from a different nature, admittedly
not new either (see subsection~\ref{s:app-triv}).

The parametric approach is more modern in spirit, and arguably more
agile, but although it enables one to give a full solution to
Problem~\ref{Pb:main} for minimal rational surfaces,
it does not provide a fully satisfactory
way of controlling equisingular deformations of curves;
somehow, it requires too much positivity of $-K_X$
(see, e.g., Remark~\ref{R:toostrong}), which explains why
Theorem~\ref{T:main} is not optimal in view of Conjecture~\ref{C:K3}.
For $K3$ surfaces, Theorem~\ref{T:K3},
which is  our main original contribution to the subject,
is beyond what is possible today with
the mere parametric approach;
we obtain it along the Cartesian approach, 
with the new tackle of formulating it in terms of general divisors on
singular curves (see section~\S\ref{s:gnlzddivs}),
and with the help of additional results from
Brill--Noether theory.
This is yet not a definitive answer either, and we believe finer
arguments are required in order to fully 
understand the subtleties of the question.

\bigskip
The organization of the paper is as follows.
In Section~\ref{S:background}, we define the abstract
notions of equigeneric and equisingular families of curves and specify
our setup.
In Section~\ref{S:parametric} we recall the relevant facts from the
parametric deformation theory, which culminate in the already
mentioned Theorem~\ref{T:nocusp}.
Section~\ref{S:cartesian} is devoted to Cartesian deformation theory,
which involves the so-called equisingular and adjoint ideals of an
integral curve with planar singularities.
In Section~\ref{S:apps} we apply the results of the two former
sections in order to prove Theorem~\ref{T:main},
and in Section~\ref{S:museum} we gather examples
in which the situation is not the naively expected one.

\bigskip \noindent
\textbf{Acknowledgements.}
We learned much from our reading of \cite{DH88} and
\cite[pp.~105--117]{harris-morrison}.
We also had the pleasure of helpful and motivating discussions
with C.~Ciliberto, L.~Ein, and C.~Galati,
and are grateful to F.~Flamini for his careful reading of this text.
Our special thanks go to A.~L.~Knutsen who in particular showed us the
crucial Example~\ref{Ex:jacobian} at the right time.
Finally, we thank X.~Chen for having kindly answered our questions
about his work \cite{chen99}.

This project profited of various visits of the authors one to another,
which have been made possible by the research group GRIFGA, in
collaboration between CNRS and INdAM.
T.D. was partially supported by French ANR projects CLASS and MACK.  E.S. is a member of GNSAGA--INdAM and was partially supported
by the project MIUR-PRIN \emph{Geometria delle variet\`a algebriche.}
\end{introduction}

\section{Equigeneric and equisingular families of curves}
\label{S:background}

We work over the field $\C$ of complex numbers.

\subsection{General definitions}

While the definition of equigenericity is rather straightforward, that
of equisingularity is much more subtle, and requires some care.
The definition given here is taken from Teissier \cite{bT77,bT80}, who
slightly modified the one originally introduced by Zariski (see
\cite[\S 5.12.2]{bT77} for a comment on this).
The two versions are anyway equivalent in our setting 
(explicited in subsection~\ref{s:setting})
by \cite[II, Thm.~5.3.1]{bT80}.
We invite the interested reader to take a look at \cite{DH88} as well.

Let $p: \mathcal{C} \to Y$ be a flat family of reduced 
curves,
where $Y$ is any separated scheme.
\begin{definition}
\label{d:equigen}
The family $p: \mathcal{C} \to Y$ is equigeneric if
\begin{inparaenum}
\renewcommand{\theenumi}{\normalfont (\roman{enumi})}
\item $Y$ is reduced,
\item the locus of singular points of fibres is proper over $Y$,
and \item\label{c:delta-inv}
 the sum of the $\delta$-invariants of the singular points of
the fibre $C_y$ is a constant function on $y \in Y$.
\end{inparaenum}
\end{definition}

When $p$ is proper, condition~\ref{c:delta-inv} above is equivalent to the
geometric genus of the fibres being constant on $Y$.

\begin{definition}
\label{d:equising}
The family $p: \mathcal{C} \to Y$ is equisingular if
there exist
\begin{inparaenum}[\normalfont (a)]
\item disjoint sections $\sigma_1, \ldots, \sigma_n$ of
$p$, the union of whose images contains the locus of singular
points of the fibres,
and \item a proper and birational morphism $\epsilon: \bar{\cal C} \to
{\cal C}$,
\end{inparaenum}
such that
\begin{inparaenum}[\normalfont (i)]
\item the composition ${\bar p}:= p\circ\epsilon: \bar{\cal C}
  \to Y$ is flat,
\item for every $y\in Y$, the induced morphism $\epsilon_y:\bar C_y
  \to C_y$ is a resolution of singularities (here $\bar C_y$ and $C_y$
  are the respective fibres of $\bar p$ and $p$ over $y$),
and \item for $i=1,\ldots,n$, the induced morphism $\bar p:
\epsilon^{-1}(\sigma_i(Y)) \to Y$ is locally (on
$\epsilon^{-1}(\sigma_i(Y))$) trivial.
\end{inparaenum}
\end{definition}

In Definition \ref{d:equigen}, the reducedness assumption on the base
is an illustration of the fact that equigenericity cannot be
functorially defined, unlike equisingularity.
%
The following result of Zariski, Teisser, Diaz--Harris, provides a
more intuitive interpretation of equisingularity.
Two germs of isolated planar curve singularities 
$(C_1,0) \subset (\C^2,0)$ and $(C_2,0) \subset (\C^2,0)$
are said to be topologically equivalent if there exists a
homeomorphism $(\C^2,0) \to (\C^2,0)$ mapping $(C_1,0)$ to $(C_2,0)$
(cf. \cite[I.3.4]{GLSb}).
The corresponding equivalence classes are called 
\emph{topological types}.
\begin{theorem}
  [{\cite[II, Thm.~5.3.1]{bT80}, \cite[Prop.~3.32]{DH88}}]
Let $p: \mathcal{C} \to Y$ be a flat family of reduced curves on a
smooth surface $X$, \ie  ${\cal C} \subset X \times Y$, and $p$ is
induced by the second projection. We assume that ${\cal C}$ and
$Y$ are reduced separated schemes of finite type. Let $\Sigma
\subset \cal C$ be the locus of singular points of fibres of $p$.
If $\Sigma$ is proper over $Y$   the following two conditions are
equivalent:
\begin{compactenum}[\normalfont (i)]
\item 
the family $p: \mathcal{C} \to Y$ is locally equisingular in the
analytic topology;
\item 
for each topological type of isolated planar curve singularity, 
all fibres over closed
points of $Y$ have the same number of singularities of that
topological type.
\end{compactenum}
\end{theorem}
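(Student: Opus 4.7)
The plan is to prove the two implications separately, in each case reducing to an analytic neighborhood of a singular point of a fibre and then exploiting the classical fact that the topological type of a plane curve singularity is determined by (and determines) the data of its embedded resolution, namely the multiplicity sequence obtained by successively blowing up infinitely near points (Enriques--Chisini). Since $\Sigma$ is proper over $Y$ and the question is local on $Y$ in the analytic topology, one may assume $Y$ is the germ of an analytic space and that $\Sigma$ is a finite union of multisections, each meeting every fibre in a single singular point.

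For the implication (i) $\Rightarrow$ (ii), the sections $\sigma_i$ and the simultaneous resolution $\epsilon\colon \bar{\mathcal C}\to \mathcal C$ provided by Definition~1.1.2, together with the local triviality of $\bar p$ over $\epsilon^{-1}(\sigma_i(Y))$, imply that the exceptional divisor above $\sigma_i(Y)$ has constant combinatorial structure, and in particular constant multiplicity and intersection behaviour with the strict transform of the fibre. Applying this observation to the iterated blowup that realises the embedded resolution shows that the multiplicity sequence of the singularity of $C_y$ at $\sigma_i(y)$ does not depend on $y$, whence the topological type is constant along each $\sigma_i$; summing over $i$ gives (ii).

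The harder implication is (ii) $\Rightarrow$ (i). The strategy is to build the sections $\sigma_i$ and the simultaneous resolution $\epsilon$ inductively, by embedded resolution in families. Locally analytically around a singular point, properness of $\Sigma$ and the fact that the singular points of the fibres have a fixed number and fixed topological types let us follow each singularity by an analytic section $\sigma_i\colon Y\to \mathcal C$. One then blows up $X\times Y$ along $\sigma_i(Y)$: the strict transform $\mathcal C'$ of $\mathcal C$ is flat over $Y$ (flatness of a strict transform in a family of reduced planar curves over a reduced base is controlled by the constancy of the multiplicity, which is a topological invariant), and the singularities of the fibres of $\mathcal C' \to Y$ again have constant topological types, with strictly smaller total $\delta$-invariant. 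Iterating finitely many times yields the desired $\epsilon\colon \bar{\mathcal C}\to\mathcal C$ with smooth fibres, and local triviality of $\bar p$ over the preimages of each $\sigma_i$ follows from the fact that the successive exceptional configurations are combinatorially constant.

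The main obstacle is the inductive step of the second implication, specifically the verification that blowing up a section through singular points of constant topological type yields a new flat family whose fibre singularities still have constant topological types. This is where one has to invoke the characterisation of the topological type in terms of the full multiplicity sequence of infinitely near points, together with the fact that the multiplicity of a planar singularity, being equal to the intersection multiplicity with a generic line, is semicontinuous and is in fact locally constant precisely when the topological type is; this yields flatness of the strict transform and lets the induction proceed. Once these local models are produced, compatibility on overlaps is automatic because the construction is canonical, and one assembles the global $\sigma_i$ and $\epsilon$ as required by Definition~1.1.2.
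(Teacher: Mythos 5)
A preliminary remark: the paper does not prove this statement --- it is quoted from Teissier and from Diaz--Harris, and no argument for it appears in the text --- so your proposal can only be judged on its own terms. Your direction (ii)~$\Rightarrow$~(i) follows the classical Zariski--Teissier line (inductive construction of a simultaneous embedded resolution, using that the topological type determines the entire tree of infinitely near points together with their multiplicities, whence flatness of the successive strict transforms); as a sketch this is the right approach, modulo the usual care needed to split the finite cover $\Sigma\to Y$ into actual sections when $Y$ is merely reduced.

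The direction (i)~$\Rightarrow$~(ii) contains a genuine gap. The notion of equisingularity in force here provides a proper birational $\epsilon\colon\bar{\mathcal{C}}\to\mathcal{C}$ which, by the simultaneous-resolution theorem of Teissier quoted in Section~1, is necessarily the \emph{normalization} of $\mathcal{C}$: it is a finite map, and there is no ``exceptional divisor above $\sigma_i(Y)$''. The only information one can extract from the flatness of $\bar p$ and the local triviality of $\epsilon^{-1}(\sigma_i(Y))\to Y$ is the constancy, along each section, of the $\delta$-invariant, of the number $r$ of branches, and of the multiplicities of the individual branches. Your sentence ``applying this observation to the iterated blowup that realises the embedded resolution'' presupposes a simultaneous embedded resolution with combinatorially constant exceptional configuration, which is essentially the conclusion to be proved. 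Passing from the numerical data actually available to the constancy of the topological type is the whole content of the cited theorem (for plane curves it amounts to the $\mu$-constant theorem, since $\mu=2\delta-r+1$), and it is not a formality, because these invariants do not determine the topological type: three smooth branches that are pairwise simply tangent and three smooth branches with pairwise intersection multiplicities $4$, $1$, $1$ both have $\delta=6$, $r=3$ and branch multiplicities $(1,1,1)$ (even the same $\mu=10$), yet are topologically distinct. What has to be shown --- and what your argument silently assumes --- is that no family keeping these invariants constant can interpolate between two such germs.
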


One then has the following result, 
often used without any mention in the literature.
It is an application of the generic smoothness theorem.
\begin{proposition}[{\cite[II, 4.2]{bT80}}]
\label{P:gen-equising} 
Let $p: \cC \to Y$ be an equigeneric family of 
reduced 
curves. There exists a dense Zariski-open subset
$U\subset Y$ such that the restriction $\cC\times_YU\to U$ is
equisingular.
\end{proposition}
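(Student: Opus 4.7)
The plan is to proceed by a finite sequence of Zariski-open shrinkings of $Y$, each justified by generic smoothness or generic flatness, with the essential input being that constancy of $\sum\delta$ on fibres forces the simultaneous normalization to exist as a flat family. I would first reduce to the case $Y$ irreducible by treating one component at a time. Applying generic smoothness to $p|_{\cC\setminus\Sigma}\colon \cC\setminus\Sigma \to Y$, I shrink $Y$ so that all singularities of $p$ are contained in $\Sigma$. Since the $\delta$-invariant of any planar singularity is bounded by the constant $\sum_{x\in \mathrm{Sing}(C_y)}\delta(C_y,x)$, each fibre $\Sigma_y$ has uniformly bounded cardinality; so $\Sigma\to Y$ is quasi-finite, and being proper by assumption it is finite. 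A further shrinking makes $\Sigma\to Y$ étale, and then $\Sigma$ splits as a disjoint union of $n$ sections $\sigma_1,\ldots,\sigma_n$ of $p$.

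The second step is to construct the simultaneous resolution. Let $\epsilon\colon \bar{\cC} \to \cC$ be the normalization of the total space. The key claim is that, because $\sum\delta$ is constant on fibres, $\bar p := p\circ\epsilon$ is flat over $Y$ and $\epsilon$ commutes with base change to closed points, so each $\epsilon_y\colon \bar C_y \to C_y$ is the normalization of $C_y$. This is a theorem of Teissier (building on work of Zariski and Chiang-Hsieh--Lipman) for flat families of reduced curves with planar singularities; informally, constancy of $\delta$ is exactly what is needed to identify $\epsilon_\ast\O_{\bar\cC}/\O_\cC$ as a locally free sheaf on $Y$-pushforward, so that the normalization process is stable under restriction to fibres. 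After this, generic smoothness applied to $\bar p$ allows one further shrinking so that $\bar{\cC}\to Y$ is smooth.

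The final step is to upgrade equigenericity to equisingularity. The topological type of an isolated planar curve singularity is encoded in a finite list of discrete numerical invariants (for instance, the multiplicity sequence in an embedded resolution, equivalently the characteristic pairs of the Puiseux parametrizations of the branches together with their mutual intersection numbers); all of these are bounded in terms of $\delta$, so only finitely many topological types occur along $\sigma_1(Y),\ldots,\sigma_n(Y)$, and they stratify $Y$ constructibly in an upper-semicontinuous manner. The generic stratum is therefore open and dense, and restricting $Y$ to this stratum yields a family which is topologically equisingular along each $\sigma_i$; together with the smoothness of $\bar p$ already secured, this gives the local triviality required by Definition~\ref{d:equising}. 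The principal obstacle is the simultaneous resolution step: the constancy of $\delta$ enters essentially there, since without it the normalization of the total space need not commute with restriction to fibres and no coherent choice of sections $\bar\sigma_i$ on $\bar{\cC}$ can be made; once that step is granted, the rest is bookkeeping via generic smoothness and semicontinuity of numerical invariants of singularities.
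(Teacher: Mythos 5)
The paper does not actually prove this proposition: it is quoted from Teissier \cite[II, 4.2]{bT80}, with only the remark that it is an application of the generic smoothness theorem. Your outline is the argument one would expect, and you correctly identify the crucial input --- that constancy of $\delta$ forces the normalization of the total space to be a simultaneous resolution, flat over $Y$ and restricting to the normalization on each fibre --- which is precisely Theorem~\ref{T:simult} of the paper (note that this requires $Y$ \emph{normal}, so you should first shrink to the smooth locus of the reduced scheme $Y$). Two of your steps, however, do not hold up as written.

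First, ``a further shrinking makes $\Sigma\to Y$ \'etale, and then $\Sigma$ splits as a disjoint union of $n$ sections'' is false: a finite \'etale cover of an irreducible variety need not split over any dense Zariski-open subset. For instance, in the family of plane curves $y^2=(x^2-t)^2(x^2-t-1)$ over the $t$-line, the two nodes of the fibre over $t$ sit at $(\pm\sqrt t,0)$ and are interchanged by monodromy, so no Zariski shrinking of the base produces sections through them. The sections required by Definition~\ref{d:equising} therefore exist in general only after an \'etale (or analytic-local) base change; this imprecision is arguably already present in the statement being proved, and what Teissier establishes is local equisingularity in the analytic topology, but your proof must acknowledge the base change rather than claim a splitting. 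Second, the assertion that the topological type along each section is a constructible, semicontinuous function of $y$, so that the generic stratum is automatically dense and open, is essentially the nontrivial content of Teissier's theorem (equivalently, of the Zariski--Teissier--Diaz--Harris equivalence recalled after Definition~\ref{d:equising}); as written you are assuming the hard part in the last step. A route that avoids this detour, once the simultaneous normalization is in place, is to verify the local triviality of $\bar p\colon \epsilon^{-1}(\sigma_i(Y))\to Y$ directly: after shrinking, this is a finite flat family of divisors supported on sections of the smooth fibration $\bar{\cC}\to Y$, and constancy of the multiplicities (upper semicontinuity plus generic flatness) trivializes it locally. That is the sense in which the proposition really is ``an application of generic smoothness,'' as the paper says.
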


The latter result implies the existence, for any flat family $p:
\cC \to Y$ of reduced curves on a smooth surface $X$, with $Y$
reduced separated and of finite type, of an equisingular
stratification of $Y$ in the Zariski 
topology. Indeed, the
geometric genus of the fibres being a lower semi-continuous
function on $Y$ (see e.g. \cite[\S2]{DH88}), our family restricts
to an equigeneric one above a 
Zariski-open subset of $Y$, to which
we can apply Proposition \ref{P:gen-equising}.

Eventually we need the following result of Teissier, which shows that
equigenericity can be interpreted in terms of the existence of a
simultaneous resolution of singularities.
\begin{theorem}
[{\cite[I, Thm.~1.3.2]{bT80}}]
\label{T:simult}
Let $p: \mathcal{C} \to Y$ be a flat family of reduced 
curves, where $\cal C$ and $Y$ are reduced separated schemes of finite
type.
If $Y$ is normal,
then the following two conditions are
equivalent:
\begin{compactenum}[\normalfont (i)]
\item
the family $p: \mathcal{C} \to Y$ is equigeneric;
\item
there exists a proper and birational morphism $\epsilon: \bar{\cal C} \to
{\cal C}$, 
such that $\bar {p} = p \circ \epsilon$ is flat, 
and for every $y \in Y$, the induced morphism
$\bar C_y \to C_y$ is a resolution of singularities
of the fibre $C_y = p ^{-1}(y)$.
\end{compactenum}

In addition, whenever it exists, the simultaneous resolution $\epsilon$
is necessarily the normalization of $\cal C$.
\end{theorem}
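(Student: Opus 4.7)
The implication (ii)$\Rightarrow$(i) is essentially formal and I would dispatch it first. Given a simultaneous resolution $\epsilon:\bar{\cC}\to\cC$, flatness of $\bar p$ makes $p_a(\bar C_y)$ locally constant on $Y$, and smoothness of the fibre identifies this with $p_g(\bar C_y)=p_g(C_y)$. Combined with local constancy of $p_a(C_y)$ from flatness of $p$, this yields constancy of the total $\delta$-invariant $\sum_q\delta(C_y,q)=p_a(C_y)-p_g(C_y)$. The singular locus of the fibres lies in $\epsilon(\mathrm{Exc}\,\epsilon)$ and is therefore proper over $Y$; reducedness of $Y$ is part of normality.

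The content lies in (i)$\Rightarrow$(ii). The candidate for $\bar{\cC}$ is forced: one must take $\epsilon:\bar{\cC}\to\cC$ to be the normalization of $\cC$, both by the final uniqueness assertion and by the universal property of normalization. What has to be proven is that $\bar p=p\circ\epsilon$ is flat and that each $\bar C_y\to C_y$ is a resolution. My plan is to introduce the coherent cokernel
\[
\mathcal Q:=\epsilon_*\O_{\bar{\cC}}/\O_{\cC},
\]
supported on the singular locus $\Sigma$ of the fibres, which by hypothesis is proper and fibrewise zero-dimensional over $Y$, hence finite; then $p_*\mathcal Q$ is coherent on $Y$ and base change commutes with the finite map $\Sigma\to Y$. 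From the short exact sequence
\[
0\to\O_{\cC}\to\epsilon_*\O_{\bar{\cC}}\to\mathcal Q\to 0
\]
and flatness of $p$, the flatness of $\bar p$ reduces to flatness of $\mathcal Q$ over $\O_Y$, which, since $\Sigma$ is finite over $Y$, is in turn equivalent to the local freeness of $p_*\mathcal Q$ on $Y$.

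The geometric input would then be to identify the fibre of $p_*\mathcal Q$ at $y$ with the total length of the analogous cokernel $\nu_{y*}\O_{\tilde C_y}/\O_{C_y}$ associated to the normalization $\nu_y:\tilde C_y\to C_y$ of $C_y$, namely $\delta(C_y)=p_a(C_y)-p_g(C_y)$. Equigenericity makes $\delta(C_y)$ constant on $Y$. I would first invoke Proposition~\ref{P:gen-equising} to get a dense equisingular open $U\subseteq Y$ over which the restriction of the normalization morphism is directly seen to be a flat simultaneous resolution, so that $p_*\mathcal Q$ is locally free of rank $\delta$ on $U$. To extend this globally, I would combine the lower bound $\dim (p_*\mathcal Q)\otimes k(y)\geq\delta$ provided by the natural surjection of $(p_*\mathcal Q)\otimes k(y)$ onto $\nu_{y*}\O_{\tilde C_y}/\O_{C_y}$, the upper semicontinuity of fibre dimension, and the normality of $Y$, to force fibre dimensions to be locally constant equal to $\delta$. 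Local freeness then follows from the standard criterion on a reduced base, after which base change identifies $\bar C_y$ with $\tilde C_y$, as desired.

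Uniqueness is a formal consequence: if $\epsilon':\bar{\cC}'\to\cC$ is another simultaneous resolution, $\bar p'$ is smooth (flat with smooth fibres), so $\bar{\cC}'$ is normal above the normal base $Y$; being proper birational over $\cC$, the map $\epsilon'$ factors through the normalization, and the resulting proper birational morphism $\bar{\cC}'\to\bar{\cC}$ of normal schemes is an isomorphism by Zariski's main theorem. The hard part of the plan is the third paragraph: promoting constancy of the total $\delta$-invariant on fibres to genuine local constancy of the scheme-theoretic fibre dimension of $p_*\mathcal Q$ is delicate, and it is precisely here that the hypothesis that $Y$ be normal, rather than merely reduced, plays its essential role by ruling out embedded or nilpotent phenomena in $p_*\mathcal Q$ which would be invisible to the $\delta$-invariant count yet would obstruct local freeness.
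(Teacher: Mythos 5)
The paper offers no proof of this statement: it is quoted from Teissier's \emph{R\'esolution simultan\'ee} I, Thm.~1.3.2, so there is no in-house argument to compare yours with. Measured against Teissier's proof, your overall architecture is the right one --- take $\epsilon$ to be the normalization of $\mathcal{C}$, study the cokernel $\mathcal{Q}=\epsilon_*\mathcal{O}_{\bar{\mathcal{C}}}/\mathcal{O}_{\mathcal{C}}$, which is finite over $Y$, and reduce flatness of $\bar p$ to local freeness of $p_*\mathcal{Q}$ over the reduced base --- and your treatment of (ii)$\Rightarrow$(i) and of uniqueness is essentially fine (though in (ii)$\Rightarrow$(i) the properness over $Y$ of the singular locus does not follow merely from its being contained in $\epsilon(\mathrm{Exc}\,\epsilon)$, which is only closed in $\mathcal{C}$).

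The genuine gap is in your third paragraph, exactly where the theorem is hard. To show that the Nakayama-upper-semicontinuous function $y\mapsto\dim_{k(y)}(p_*\mathcal{Q})\otimes k(y)$, which equals $\delta$ on the dense equisingular open set and is therefore $\geq\delta$ everywhere, is in fact constant, you need the \emph{upper} bound $\dim_{k(y)}(p_*\mathcal{Q})\otimes k(y)\leq\delta(C_y)$. Your ``natural surjection of $(p_*\mathcal{Q})\otimes k(y)$ onto $\nu_{y*}\mathcal{O}_{\widetilde{C}_y}/\mathcal{O}_{C_y}$'' would only reprove the lower bound $\geq\delta(C_y)$, which is redundant with semicontinuity and forces nothing; your two inequalities point the same way. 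Moreover that surjection is unjustified: $(p_*\mathcal{Q})\otimes k(y)$ is the cokernel of $\mathcal{O}_{C_y}\to\epsilon_{y*}\mathcal{O}_{\bar{\mathcal{C}}_y}$ with $\bar{\mathcal{C}}_y$ the \emph{scheme-theoretic} fibre of the normalization, and although a canonical lift $\widetilde{C}_y\to\bar{\mathcal{C}}_y$ does exist (valuative criterion, using that $\mathcal{C}$ is normal at the generic points of $C_y$ because $Y$ is normal and $p$ is flat with reduced fibres), the induced map $\epsilon_{y*}\mathcal{O}_{\bar{\mathcal{C}}_y}\to\nu_{y*}\mathcal{O}_{\widetilde{C}_y}$ need not be surjective. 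The missing content is precisely that $\bar{\mathcal{C}}_y$ is \emph{reduced}, so that $\mathcal{O}_{C_y}\subseteq\epsilon_{y*}\mathcal{O}_{\bar{\mathcal{C}}_y}\subseteq\nu_{y*}\mathcal{O}_{\widetilde{C}_y}$ and the upper bound drops out; this is where normality of $Y$ does concrete work rather than serving as a slogan about ``ruling out nilpotents in $p_*\mathcal{Q}$''. For $Y$ a smooth curve the argument is: $\bar{\mathcal{C}}$ is a normal surface, hence Cohen--Macaulay, hence flat over the one-dimensional regular base by miracle flatness; its fibres are Cartier divisors in a normal surface, so have no embedded points, and are generically reduced because $\mathcal{C}$ is normal at the generic points of $C_y$; hence they are reduced. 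The case of higher-dimensional normal $Y$ requires a further (nontrivial) reduction, and the statement genuinely fails over reduced non-normal bases, which is a sign that any proof not isolating this step cannot be complete.
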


\subsection{Superficial setting}
\label{s:setting}

We now introduce our set-up for the remaining of this paper.

\paragraph{}
Unless explicit mention to the contrary, $X$ shall design a
nonsingular projective connected   algebraic
surface. Given an element $\xi \in \NS (X)$ of the Néron-Severi group
of $X$ we let
\[
\Pic ^\xi (X) := \{ L\in \Pic(X)\
| \ 
\text{$L$ has class $\xi$}\}.
\]
The  Hilbert scheme of effective divisors of $X$ having class $\xi$,
which we denote by $\curves^\xi_X$, is fibered  over
$\Pic^\xi(X)$
\[
\curves ^\xi_X \longrightarrow \Pic ^\xi(X)
\]
with fibres linear systems. We write $p_a(\xi)$ for the common arithmetic genus of all members of
$\curves^\xi_X$.  In case $q(X):=\h^1(X,\O_X)=0$, \ie $X$ is
\emph{regular}, 
$\curves^\xi_X$ is a disjoint union of finitely many linear
systems $|L|$, with $L$ varying in $\Pic^\xi(X)$.

\paragraph{}
For any given integer $\delta$ such that  $0 \le \delta \le
p_a(\xi)$ there is a well defined, possibly empty,  locally closed
\emph{subscheme} $V^{\xi,\delta}\subset \curves^\xi_X$,
whose geometric points parametrize reduced and irreducible curves
having exactly $\delta$ nodes and no other singularities. These
subschemes are defined functorially in a well known way
\cite{jW74b} and will be called \emph{Severi varieties}.

More generally, given a reduced curve $C$ representing 
$\xi \in \NS(X)$,
there is a functorially defined subscheme 
$\ES(C) \subset \curves ^\xi _X$
whose geometric points parametrize those reduced curves that
have the same number of singularities as $C$ for every
equivalence class of planar curve singularity
\cite{jW74}.
The restriction to $\ES(C)$ of the universal family of curves over
$\curves ^\xi _X$ is the largest equisingular family
of curves on $X$ that contains $C$.

\paragraph{}
We will also consider, for any given integer  $g$ such
that $0 \le g \le p_a(\xi)$, the locally closed \emph{subset}
$V^\xi_g\subset \curves^\xi_X$  whose geometric points
parametrize reduced and irreducible curves $C$  having geometric
genus $g$,  \ie  such that their normalization has genus $g$. When
$\delta = p_a(\xi)-g$ we have  $V^{\xi,\delta}\subset V^\xi_g$.

There is also, for each $L \in \Pic^\xi(X)$, a
subscheme
$ 
V^\delta_L = V^{\xi,\delta} \cap |L|
$ 
of $|L|$, and a locally closed subset
$ 
V_{L,g} = V^\xi_g \cap |L|
$. 
These are the natural objects to consider when $X$ is regular.

\section{A parametric approach}
\label{S:parametric}

\subsection{The scheme of morphisms} \label{S:morph}

We briefly recall some facts from the deformation theory of maps with
fixed target, which will be needed later on.
Our main reference for this matter is \cite[\S 3.4]{eS06};
\cite[Chap.~XXI \S\S8--10]{ACGII} may also be useful.
We consider  a fixed
nonsingular projective $n$-dimensional variety $Y$. 

\paragraph{}
\label{p:modular}
We use the definition of \emph{modular family}, as given in
\cite[p. 171]{hart}.
For every $g \geq 0$, there is
a modular family $\pi _g : \mathcal{D}_g \to  S_g$ of smooth projective
connected curves of genus $g$ by
\cite[Thm.~26.4 and Thm.~27.2]{hart},
and $S_g$ has dimension $3g-3+a_g$, with $a_g$ the dimension of the
automorphism group of any genus $g$ curve.
Then, setting 
$M_g (Y)$ to be the relative $\Hom$ scheme
$\Hom ({\cal D}_g / S_g, Y \times S_g / S_g)$
and ${\cal D}_g(Y) := \cD_g \times _{S_g} M_g (Y)$,
there is a modular family of morphisms from nonsingular projective
connected curves   of genus $g\ge 0$ to $Y$
in the form of the commutative diagram
 \begin{equation}\label{E:mor1}
\xymatrix@=15pt{
{\cal D}_g(Y) \ar[r]^-{\Phi_g} \ar[d]
& Y \times M_g(Y) \ar[dl]
\\
M_g(Y)}
\end{equation}
which enjoys properties (a),(b),(c) of \cite[Def.\ p.171]{hart}
(note that here we declare two morphisms to be isomorphic if they are
equal).
Note that the scheme $M_g(Y)$ and diagram \eqref{E:mor1}
are unique only up to an étale base change;  
nevertheless, with an abuse of language we call $M_g(Y)$   \emph{the scheme of morphisms} from curves of genus $g$ to $Y$. 

Let
\[
\phi: D \to Y
\]
be a morphism from a nonsingular connected projective  curve $D$ of
genus $g$
and $[\phi]\in M_g(Y)$ a point parametrizing it. 
There is an exact sequence (\cite[Prop.~4.4.7]{eS06})
\begin{equation}\label{E:thilb1}
0 \to  \H^0(D,\phi^*T_Y) 
\to T_{[\phi]} M_g(Y) 
\to \H^1(D,T_D) 
\to \H^1(D,\phi^*T_Y),
\end{equation}
and it follows from \cite[I.2.17.1]{jK96}
that
\begin{equation}\label{E:thilb2}
-K_Y \cdot \phi_* D +(n-3)(1-g)+\dim\left(\mathrm{Aut}(D)\right)
\le \dim_{[\phi]}M_g(Y).
\end{equation}

\paragraph{}
We denote by  $\mathrm{Def}_{\phi/Y}$
the deformation functor of $\phi$ with fixed target $Y$, as
introduced in \cite[\S 3.4.2]{eS06}.  
Recall  that $N_\phi$, 
the \emph{normal sheaf of $\phi$}, 
is the sheaf of $\O_{D}$-modules
defined by the exact sequence on $D$
\begin{equation}\label{E:mor2}
0 \to T_{D} \to   \phi^*T_Y \to
N_\phi \to   0.
\end{equation}
It controls the functor $\mathrm{Def}_{\phi/Y}$: one has
$\mathrm{Def}_{\phi/Y}(\C[\epsilon])= \H^0(D,N_\phi)$, and
$\H^1 (D, N_\phi)$ is an obstruction space for
${\rm Def}_{\phi/Y}$;    
in particular, if $R_\phi$ is the complete local algebra which
prorepresents $\mathrm{Def}_{\phi/Y}$ 
(\cite[Thm.~3.4.8]{eS06}),  we have
\[\chi(N_\phi) \le \dim(R_\phi)\le
h^0(N_\phi).
\]
Using the exact sequence \eqref{E:mor2}, one computes
\begin{equation}\label{E:mor2.1}
\chi(N_\phi) = 
\chi(\omega_{D}\otimes\phi^*\omega_Y^{-1}) 
= -K_Y \cdot \phi_* D +(n-3)(1-g),
\end{equation}
 hence
\begin{equation}\label{E:mor3}
-K_Y \cdot \phi_* D +(n-3)(1-g) \le \dim(R_\phi)\le
h^0(N_\phi)=\chi(N_\phi)+h^1(N_\phi).
\end{equation}
In particular, $R_\phi$ is smooth of dimension 
$-K_Y \cdot \phi_* D +(n-3)(1-g)$
  if and only if ${\rm H}^1(N_\phi)=0$.

\paragraph{}
In analyzing the possibilities here, one has to keep in mind that
$N_\phi$ can have torsion. In fact there is an exact sequence
of sheaves of $\O_{D}$-modules
\begin{equation}\label{E:mor4.0}
    0 \to \cH_\phi \to N_\phi \to
    \bar{N}_\phi \to 0,
\end{equation}
where $\cH_\phi$ is the torsion subsheaf of $N_\phi$, and
$\bar{N}_\phi$ is locally free. 
The torsion sheaf $\cH_\phi$ is supported on the ramification
divisor $Z$ of $\phi$, and it is zero if and only if $Z=0$.
Moreover, there is an exact sequence of locally free sheaves on
$D$
\begin{equation}
\label{E:mor4}
0 \to
T _{D} (Z) \to \phi ^* T_Y \to
{\bar N} _\phi
\to 0.
\end{equation}

\paragraph{}
The scheme $M_g(Y)$ and the functors $\mathrm{Def}_{\phi/Y}$ are related as follows. For each $[\phi]\in M_g(Y)$ we get by restriction a morphism from the  prorepresentable functor   
$h_{\widehat{\O}_{M_g(Y),[\phi]}}$ to  $\mathrm{Def}_{\phi/Y}$. Call $\rho_\phi$ this morphism. Its differential is described by the diagram:
\[
\xymatrix{
0 \ar[r] & H^0(D,\phi^*T_Y) \ar[d]\ar[r]& T_{[\phi]}M \ar[d]^-{d\rho_\phi}\ar[r]&H^1(D,T_D)\ar@{=}[d] \ar[r]&H^1(D,\phi^*T_Y)\ar@{=}[d] \\
0 \ar[r]& H^0(D,\phi^*T_Y)/H^0(D,T_D)\ar[r]& H^0(D,N_\phi) \ar[r]& H^1(D,T_D)\ar[r]&H^1(D,\phi^*T_Y)
}
\]
where the top row is the sequence \eqref{E:thilb1} and the second row is
deduced from the sequence \eqref{E:mor2}.  This diagram shows that
$d\rho_\phi$ is surjective with kernel $H^0(D,T_D)$, whose dimension
is equal to $\dim\left(\mathrm{Aut}(D)\right)$. In particular, if
$M_g(Y)$ is smooth at $[\phi]$, then $\mathrm{Def}_{\phi/Y}$ is smooth
as well and  
$\dim(R_\phi)= \dim_{[\phi]}(M_g(Y))-
\dim\left(\mathrm{Aut}(D)\right)$.  
This analysis is only relevant when $g=0,1$, because otherwise
$\rho_\phi$ is an isomorphism.

\subsection{Equigeneric families and 
schemes of morphisms}

In view of the superficial situation set up in 
subsection~\ref{s:setting},
we will often consider the case when $\phi$ 
is the morphism $\varphi: \bar C \to X$,
where $C$ is an integral curve in a smooth projective surface $X$,
and $\varphi$ is the composition of the normalization
$\nu: \bar C \to C$ with the inclusion $C \subset X$;
we may loosely refer to $\varphi$ as the normalization of
$C$.
We then have
\begin{equation*}
{\bar N} _\varphi \cong 
\varphi ^* \omega_X ^{-1} \otimes \omega _{\bar C} (-Z)
\end{equation*}
by the exact sequence \eqref{E:mor4}.
The embedded curve $C$ is said to be \emph{immersed} if the
ramification divisor $Z$ of $\varphi$ is zero;
in this case, we may also occasionally say that $C$ has no
(generalized) cusps.

The following result is based on a crucial observation by Arbarello
and Cornalba \cite[p. 26]{AC81}.

\begin{lemma}
\label{L:AC} 
Let $B$ be a semi-normal\,\footnote
{we refer to \cite[\S I.7.2]{jK96} for background on
  this notion.}
connected scheme, $0 \in B$ a closed point, $\pi: \cD \to B$ a flat family of
smooth projective irreducible curves of genus $g$, and
\[ \xymatrix@=15pt{
\mathcal{D} \ar[d] _{\pi} \ar[r] ^(.35){\Phi}
& X \times B \ar[dl] ^{\mathrm{pr}_2} \\
B
} \]
a family of morphisms.
We call $D_0$ the fibre of $\cD$ over $0\in B$, 
$\phi_0: D_0 \to X$ the restriction of $\Phi$, which we assume to be
birational on its image,
and $\xi$ the class of $\phi_0 (D_0)$ in $\NS (X)$.

\subparagraph{}
The scheme-theoretic image $\Phi (\mathcal{D})$ is flat over $B$.

This implies that there are two classifying morphisms 
$p$ and $q$
from $B$ to $M_g(X)$ and $\curves_X ^{\xi}$
respectively, with differentials
\[
d\rho_{\phi_0} \circ dp_0 : T_{B,0} \to \H^0 (D_0, N _{\phi_0})
\quad \text{and} \quad 
dq_0 : T _{B,0} \to \H^0 (\phi_0(D_0), N _{\phi_0(D_0)/X}).
\]

\subparagraph{}
The inverse image by $d\rho_{\phi_0} \circ dp_0$ of the torsion
$\H^0 (D_0, \mathcal{H}_{\phi_0}) \subset \H^0 (D_0, N _{\phi_0})$
is contained in the kernel of $dq_0$.
\end{lemma}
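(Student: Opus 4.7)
The plan is to treat the two subparagraphs in succession.

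For \textbf{(i)}: since $\phi_0$ is birational onto its image, semi-continuity of fibre degree ensures that $\phi_b$ is birational onto its image for all $b$ in an open neighbourhood $U \subset B$ of $0$; hence each scheme-theoretic fibre of $\Phi(\cD) \to B$ above $U$ is an integral curve in $X$ of class $\xi$ and arithmetic genus $p_a(\xi)$. The Hilbert polynomial of these fibres with respect to any ample class is therefore locally constant around $0$, and since $B$ is reduced (being semi-normal) this gives flatness of $\Phi(\cD)$ over a neighbourhood of $0$. The classifying morphism $q : B \to \curves^\xi_X$ then comes from the universal property of the Hilbert scheme, while $p: B \to M_g(X)$ is provided by the family $\Phi$ itself (up to an étale base change, for which semi-normality of $B$ is useful).

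For \textbf{(ii)}, let $C_0 := \phi_0(D_0) \subset X$ and let $\nu: D_0 \to C_0$ be the normalization induced by $\phi_0$. The key point is the existence of a natural sheaf map
\[
\alpha: N_{\phi_0} \longrightarrow \nu^* N_{C_0/X}
\]
vanishing on the torsion subsheaf $\cH_{\phi_0}$. To construct it, pull the conormal sequence of $C_0 \subset X$ back along $\phi_0$ and compose with the codifferential $\phi_0^*\Omega_X \to \Omega_{D_0}$; since this composition sends a local equation $f$ of $C_0$ to $d(\phi_0^* f) = 0$, it is the zero map. Dualising yields a map $\phi_0^* T_X \to \nu^* N_{C_0/X}$ annihilating the image of $T_{D_0}$, whence $\alpha$. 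Because $\nu^* N_{C_0/X}$ is a line bundle (hence torsion-free), $\alpha$ automatically kills $\cH_{\phi_0}$.

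The crux of the argument is the commutative diagram
\[
\xymatrix{
T_{B,0} \ar[r]^-{d\rho_{\phi_0}\circ dp_0} \ar[d]_{dq_0}
& \H^0(D_0, N_{\phi_0}) \ar[d]^{\H^0(\alpha)} \\
\H^0(C_0, N_{C_0/X}) \ar[r]^-{\nu^*} & \H^0(D_0, \nu^* N_{C_0/X}).
}
\]
I would verify this by testing against a tangent vector $v: \Spec \C[\epsilon] \to B$: the top path encodes the first-order perturbation $\sigma \in \phi_0^* T_X$ of $\phi_0$, the bottom path encodes the first-order perturbation $f + \epsilon g$ of the local equation of the image, and the condition $\widetilde f \circ \widetilde\phi \equiv 0 \mod \epsilon^2$ forces $g \circ \phi_0 = -\langle \phi_0^* df, \sigma \rangle = -\alpha(\sigma)$, which says exactly that the square commutes up to sign. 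The map $\nu^*$ on sections is injective, because tensoring the inclusion $\O_{C_0} \hookrightarrow \nu_*\O_{D_0}$ (valid since $C_0$ is reduced) with the line bundle $N_{C_0/X}$ yields an inclusion $N_{C_0/X} \hookrightarrow \nu_*\nu^* N_{C_0/X}$. Hence, if $d\rho_{\phi_0}(dp_0(v)) \in \H^0(D_0, \cH_{\phi_0})$, commutativity yields $\nu^*(dq_0(v)) = 0$, and therefore $dq_0(v) = 0$.

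The main obstacle will be the commutativity of the square: its two vertical arrows are differentials of classifying morphisms into different moduli problems, and their comparison requires the explicit infinitesimal identification described above between a first-order deformation of the parametrisation $\phi_0$ and the corresponding first-order deformation of its scheme-theoretic image --- which itself crucially relies on the flatness established in \textbf{(i)}.
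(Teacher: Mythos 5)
Your treatment of the second subparagraph is essentially sound. The comparison map $\alpha: N_{\phi_0} \to \nu^* N_{C_0/X}$ is well defined for exactly the reason you give, it annihilates $\cH_{\phi_0}$ because its target is a line bundle on a smooth curve, and the first-order computation identifying $\nu^*\circ dq_0$ with $\H^0(\alpha)\circ d\rho_{\phi_0}\circ dp_0$ up to sign, combined with the injectivity of $\nu^*$ on global sections of $N_{C_0/X}$, gives a slightly more structured variant of the paper's argument (which instead describes directly the first-order deformation attached to a torsion section and observes that it leaves the image fixed). Both versions rest on the flatness of part (i), as you correctly note.

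The gap is in the first subparagraph, at the sentence claiming that each scheme-theoretic fibre of $\Phi(\cD)\to B$ is an integral curve of class $\xi$. Formation of scheme-theoretic images does not commute with base change, so the fibre $\Phi(\cD)_b$ of the scheme-theoretic image is not a priori the image of $\phi_b$: all you know is that it is a closed subscheme of $X$ supported on, and containing, the integral curve $\phi_b(D_b)$. At special points it could carry nilpotents or embedded components, in which case its Hilbert polynomial jumps up and the constancy you need fails; ruling this out is essentially equivalent to the flatness you are trying to prove, so the argument is circular precisely at its key step. A symptom of the problem is that your proof only uses reducedness of $B$, whereas semi-normality is the operative hypothesis: the paper deduces flatness from Koll\'ar's theorem that a well-defined family of codimension-one cycles over a \emph{semi-normal} base underlies a flat family of divisors. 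The normalization of a cuspidal curve illustrates what goes wrong over a merely reduced base: the graph of $\Spec\C[t]\to B=\Spec\C[t^2,t^3]$ inside $\Spec\C[t]\times B$ is a reduced, irreducible scheme whose set-theoretic fibres over $B$ are single reduced points, yet its scheme-theoretic fibre over the cusp is $\Spec\C[t]/(t^2)$ of length two, and the projection to $B$ is not flat. So ``reduced total space with constant reduced set-theoretic fibres'' does not yield constant Hilbert polynomials, and a correct proof of (i) must make genuine use of semi-normality at this point.
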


\begin{proof}  
The morphism $\varpi=\mathrm{pr}_1: \Phi (\cD) \to B$
is a well-defined family of
codimension $1$ algebraic cycles of $X$ in the sense of
\cite[I.3.11]{jK96}.
Since $B$ is semi-normal, it follows from \cite[I.3.23.2]{jK96}
that $\varpi$ is flat.

Given a non-zero section $\sigma \in H^0(D_0,N_{\phi_0})$,
the first order deformation of ${\phi_0}$ defined
by $\sigma$ can be described in the following way: consider an
affine open cover $\{U_i\}_{i\in I}$ of $C_0$, and for each $i
\in I$ consider a lifting $\theta_i\in \Gamma(U_i,{\phi_0^*}T_X)$
of the restriction   $\sigma_{|U_i}$. Each $\theta_i$ defines a
morphism
\[
\psi_i: U_i\times
\mathrm{Spec}(\C[\epsilon])\longrightarrow X
\]
extending ${\phi_0}_{|U_i}: U_i\to X$.  The morphisms
$\psi_i$ are then made compatible after gluing the trivial
deformations $U_i\times \mathrm{Spec}(\C[\epsilon])$ into
 the first order deformation of $D_0$ defined by the
coboundary $\partial(\sigma) \in H^1(C_0,T_{C_0})$
of the exact sequence \eqref{E:mor2}. In case
$\sigma \in \H^0(D_0,\cH_{\phi_0})$, everyone of the maps
$\psi_i$ is the trivial deformation  of $\sigma_{|U_i}$ over an
open subset. This implies that the corresponding first order
deformation of ${\phi_0}$ leaves the image fixed,
hence the vanishing of $dq_0 (\sigma)$.
\end{proof}

\begin{lemma}\label{L:comp1}
Let $m_0\in M_g(X)$ be  a general  point of an irreducible component
of $M_g(X)$, 
and $\phi_0: D_0 \to X$ the corresponding morphism. 
Assume that $\phi_0$ is birational onto its
image $C_0 := \phi_0(D_0)$, and that $[C_0]\in V^\xi_g$. Then
$[C_0]$ belongs to a unique irreducible component of $V^\xi_g$ and  
\[
\dim_{[C_0]} V^\xi_g
= \dim R_\varphi = 
\dim_{m_0} M_g(X) -
\dim (\operatorname{Aut} D_0 ),
\]
where $R_{\phi_0}$ is the complete local $\C$-algebra that
prorepresents $\mathrm{Def}_{{\phi_0}/X}$. 
\end{lemma}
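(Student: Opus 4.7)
The plan is to compare the local dimensions of $M_g(X)$ at $m_0$ and of $V^\xi_g$ at $[C_0]$ via two classifying morphisms that are mutually inverse up to the group $\operatorname{Aut}(D_0)$, then read off the statement from the dimension formula of paragraph (2.4). Since $m_0$ is a general point of an irreducible component of $M_g(X)$, the scheme $M_g(X)$ is smooth at $m_0$; by the final observation of paragraph (2.4), $\operatorname{Def}_{\phi_0/X}$ is then smooth as well, and $\dim R_{\phi_0} = \dim_{m_0} M_g(X) - \dim \operatorname{Aut}(D_0)$. It therefore suffices to establish the equality
$$\dim_{[C_0]} V^\xi_g = \dim_{m_0} M_g(X) - \dim \operatorname{Aut}(D_0),$$
together with the uniqueness of the component of $V^\xi_g$ through $[C_0]$.

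For the inequality $\geq$, observe that being birational onto the image is open on $M_g(X)$, so $\phi_0$ sits inside a smooth (hence semi-normal) irreducible neighbourhood $W$ of $m_0$ consisting entirely of morphisms birational onto their images. Applying part (a) of Lemma~\ref{L:AC} to the restriction of the universal family $\Phi_g$ to $W$, the corresponding image cycles form a flat family and therefore induce a classifying morphism $\Psi: W \to \curves^\xi_X$, whose image lies in $V^\xi_g$ by construction. The fibre $\Psi^{-1}([C_0])$ parametrises morphisms from a smooth genus $g$ curve birational onto $C_0$; each such morphism factors through $\nu: D_0 \to C_0$ via an isomorphism $D \xrightarrow{\sim} D_0$, so the fibre has dimension exactly $\dim \operatorname{Aut}(D_0)$. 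Hence $\overline{\Psi(W)}$ is an irreducible subvariety of $V^\xi_g$ through $[C_0]$ of dimension $\dim_{m_0} M_g(X) - \dim \operatorname{Aut}(D_0)$, contained in some component $V_\star$ of $V^\xi_g$ through $[C_0]$.

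For the reverse inequality and the uniqueness, let $V$ be any irreducible component of $V^\xi_g$ through $[C_0]$ and $\tilde V \to V$ its normalization. The restriction to $V$ of the universal family, pulled back to the normal scheme $\tilde V$, is a flat equigeneric family, so by Theorem~\ref{T:simult} it admits a simultaneous resolution $\bar{\mathcal D} \to \tilde V$ by smooth genus $g$ curves equipped with a morphism to $X$. Property (c) of the modular family \eqref{E:mor1} supplies, after an étale base change $\tilde V' \to \tilde V$, a classifying morphism $f: \tilde V' \to M_g(X)$ whose composition with $\Psi$ is the natural finite map $\tilde V' \to V \hookrightarrow \curves^\xi_X$. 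Consequently $f$ is finite onto its image, which is irreducible of dimension $\dim V$, meets the $\operatorname{Aut}(D_0)$-orbit of $m_0$, and therefore lies in the irreducible component of $M_g(X)$ through $m_0$. Combining with the generic fibre dimension of $\Psi$, one obtains $\dim V \leq \dim_{m_0} M_g(X) - \dim \operatorname{Aut}(D_0)$, which yields both the reverse inequality and the equality $V = V_\star$, whence uniqueness.

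The main point to watch is the identification of $\Psi^{-1}([C_0])$ with $\operatorname{Aut}(D_0)$: since $M_g(X)$ parametrises morphisms up to strict equality, two morphisms that differ by an automorphism of the source are distinct points, which is precisely what accounts for the correction by $\dim \operatorname{Aut}(D_0)$ (non-trivial only when $g \leq 1$). A secondary subtlety is the étale ambiguity in the very definition of $M_g(X)$, but it is innocuous here because we only need $f$ on arbitrarily small étale neighbourhoods of $[C_0]$ in $\tilde V$, and all dimensions involved are invariant under étale base change.
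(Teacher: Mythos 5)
You follow essentially the same route as the paper --- a classifying morphism into $V^\xi_g$ obtained from the flatness statement of Lemma~\ref{L:AC}, and an inverse construction from the simultaneous resolution of Theorem~\ref{T:simult} together with property (c) of the modular family --- but your very first step contains a genuine error. It is not true that $M_g(X)$ is smooth at a general point of an irreducible component: generic smoothness fails for non-reduced schemes, and $M_g(X)$ can be non-reduced precisely in the situation of the lemma. Remark~\ref{Ex:nonredM} (built on Example~\ref{Ex:cayleybacharach}) exhibits a component of $M_9(X)$ of dimension $1$ whose general point $[\varphi]$ has tangent space $\H^0(N_\varphi)$ of dimension $2$, so $M_9(X)$ is nowhere smooth along that component, even though $\varphi$ is birational onto its image and $[C]\in V_{L,9}$. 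This false claim is load-bearing where you use it to produce a smooth, hence semi-normal, neighbourhood $W$ of $m_0$ on which to apply Lemma~\ref{L:AC}: no such neighbourhood need exist. The correct move, and the one the paper makes, is to replace $M_g(X)$ by the semi-normalization $\tilde M$ of $M_g(X)_{red}$ and pull the modular family back to $\tilde M$; this costs nothing, since passing to the semi-normalization of the reduction changes neither the local dimensions nor the image cycles. Your other use of smoothness, namely the identity $\dim R_{\phi_0}=\dim_{m_0}M_g(X)-\dim \operatorname{Aut}(D_0)$, does not actually require it: for $g\ge 2$ the morphism $\rho_{\phi_0}$ of \S\ref{S:morph} is an isomorphism and $\operatorname{Aut}(D_0)$ is finite, while for $g\le 1$ the surjectivity of $d\rho_{\phi_0}$ with kernel $\H^0(D_0,T_{D_0})$ already gives what is needed.

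Two smaller points. In the final step, the inequality $\dim V\le \dim V_\star$ for every component $V$ through $[C_0]$ does not by itself force $V=V_\star$; what does is that each such $V$ is contained in the irreducible closed subset $\overline{\Psi(\tilde M_1)}$ of $V^\xi_g$ (with $\tilde M_1$ the component of $\tilde M$ over $m_0$), and an irreducible component of $V^\xi_g$ contained in an irreducible closed subset of $V^\xi_g$ must coincide with it. Also, in the reverse inequality the phrase ``combining with the generic fibre dimension of $\Psi$'' hides the two facts that actually produce the $\operatorname{Aut}(D_0)$-correction: the classifying map $f$ is generically injective because the universal family of curves over $V$ is nowhere isotrivial, and its image is transverse to the orbits of $\operatorname{Aut}$ acting by precomposition, so that the orbit-saturation of the image is an irreducible subvariety of dimension $\dim V+\dim\operatorname{Aut}(D_0)$ through $m_0$.
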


\begin{proof}
Consider the reduced scheme $M_{red}:= M_g(X)_{red}$, 
and let $\tilde M$ be its semi-normalization.
Let  
\[ \xymatrix@=15pt{
{\mathcal{D}} _{\tilde M} \ar[d] \ar[r] ^(.4){\Phi_{\tilde M}}
& X \times \tilde{M} \ar[dl] \\
\tilde{M}
} \]
be the pullback of the modular family 
\eqref{E:mor1}.
Then we have a diagram
\[\xymatrix@=15pt{ 
\Phi_{\tilde{M}} ({\mathcal{D}} _{\tilde M}) \ar[d] _\pi \ar@{^(->}[r] 
& X\times \tilde{M} \ar[dl] \\
\tilde{M}
}\]
where $\Phi_{\tilde{M}} ({\mathcal{D}} _{\tilde M})$
is the scheme-theoretic image.  
The morphism $\pi$ is flat by Lemma~\ref{L:AC}, 
and therefore we have an induced functorial morphism
$\Psi: \tilde{M} \to V^\xi_g$.

 Suppose that $\Psi(m_1)=\Psi(m_0)=[C_0]$ for some $m_1 \in 
\tilde M$. Then
 $m_0$ and $m_1$ parametrize the same morphism up to an automorphism
 of the source $D_0$.
By  property (a) of  modular families, this implies that the fibres of
$\Psi$ have the same dimension as $\operatorname{Aut} D_0$,
and therefore that
\[
\dim_{m_0} M_g(X) - \dim (\operatorname{Aut} D_0)
\le \dim_{[C]} V ^\xi _g.
\]

On the other hand, consider the normalization map $\bar{V}^\xi_g \to
V^\xi_g$, and the pull-back to $\bar{V}^\xi_g$ of the universal family
of curves over $\curves^\xi_X$. 
It has a simultaneous resolution of
singularities $\bar {\mathcal {U}} \to \bar V ^\xi _g$
by Theorem~\ref{T:simult},
which comes with a family of morphisms
$N: \bar {\mathcal{U}} \to X \times \bar V ^\xi _g$ over
$\bar V ^\xi _g$. 
By property (c) of the modular family $M_g(X)$, this implies that 
there exist an 
  \'etale surjective $\eta:W\longrightarrow \bar{V}^\xi_g$
  and a morphism
$w:W \to M_g(X)$ such that 
$\bar {\mathcal{U}} _{W}
:=\bar {\mathcal{U}} \times _{\bar V ^\xi _g} W$
fits in the Cartesian diagram
\[ \xymatrix@=15pt{
\bar {\mathcal{U}} _{W} \ar[r] \ar[d] {\ar @{} [dr] |{\Box}}
& \cD_g \ar[d] \\
W \ar[r]^-w & M_g(X)
} \]
where the left vertical map is the pullback of $N$.
The map $W \to M_g(X)$ is generically injective because the universal
family of curves over $V^\xi_g$ is nowhere isotrivial.
Moreover, its image is transverse at every point $m$ (corresponding to
a morphism $\phi$) to the subvariety of $M_g (X)$  parametrizing
morphisms gotten by composing $\phi$ with an automorphism of its
source. 
This implies that, for $c_0\in \eta^{-1}([C_0])$ 
\[
\dim_{[C_0]}V^\xi_g 
= \dim_{c_0}W
\le \dim_{m_0} M_g(X)-\dim (\operatorname{Aut} D_0 ).
\]
It is then clear that $[C_0]$ belongs to a unique irreducible
component of $V^\xi_g$.
\end{proof}

\begin{remark}
\label{Ex:nonredM}
It can happen that 
$M_g(X)$ is non-reduced.
For an example of such a situation, consider the pencil $|L|$ 
constructed in Example~\ref{Ex:cayleybacharach} below
(we use the notations introduced therein), 
and let $C\subset X$ be a general element of $\V_{L,9}$,
which is open and dense in $|L|$.
The curve $C$ has one ordinary cusp $s$ and no further singularity;
we let $s' \in \bar C$ be the unique ramification point of the
normalization $\varphi: \bar C \to X$.

One has $\chi(N_{\varphi})= -8+8=0$ whereas 
$\dim _{[\varphi]} M_9 (X) = 1$. 
The torsion part of $N _\varphi$ is the skyscraper sheaf 
$\underline \C _{s'}$, and accordingly $\h^0(\cH_\varphi)=1$.
One has $\h^0(\omega_{\bar{C}}\otimes \varphi^*\omega_X^{-1})=1$, 
and the unique divisor in 
$|\omega_{\bar{C}}\otimes \varphi^*\omega_X^{-1}|$
contains $s'$ (with multiplicity $4$), so that
\[
\h^0(\bar{N}_\varphi)=\h^0\left(\omega_{\bar{C}}\otimes
\varphi^*\omega_X^{-1}(-{s'})\right)=
\h^0(\omega_{\bar{C}}\otimes \varphi^*\omega_X^{-1})=1.
\]
We then deduce from the exact sequence \eqref{E:mor4.0} that
$\h^0(N_\varphi)=2$
and $\h^1(N_\varphi)=2$.
\end{remark}

\subsection{Conditions for the density of nodal 
(resp. immersed) curves}

The following result is essentially contained in 
\cite{jH86,harris-morrison}; the idea of condition \ref{c:3ample}
therein comes from \cite{AC80}.

\begin{theorem}\label{T:nocusp}
Let $V \subset V^\xi_g$ be an irreducible component and let $[C]
\in V$ be a general point, with normalization
$\varphi: \bar C \to X$.

\smallskip
\subparagraph{}
\label{T:nocuspI}
Assume that the following two conditions
are satisfied:
\begin{compactenum}
\renewcommand{\theenumi}{\normalfont (\alph{enumi})}
\item
\label{c:bpf}
$\omega_{\bar C}\otimes \varphi^*\omega_X^{-1}$ is globally 
generated;
\item
\label{c:dim}
$\mathrm{dim}(V) \ge h^0(\bar C,\omega_{\bar C}\otimes
\varphi^*\omega_X^{-1})$.
\setcounter{memorise}{\value{enumi}}
\end{compactenum}
Then $C$ 
is immersed, \ie  all its singularities consist of
(possibly non transverse) linear branches.

\smallskip
\subparagraph{}
\label{T:nocuspII}
Assume in addition that the following condition is satisfied:
\begin{compactenum}
\renewcommand{\theenumi}{\normalfont (\alph{enumi})}
\setcounter{enumi}{\value{memorise}}
\item
\label{c:3ample}
the line bundle $\omega_{\bar C}\otimes \varphi^*\omega_X^{-1}$
separates any (possibly infinitely near) $3$ points,
\ie 
\[h^0\big(\bar C,\omega_{\bar C}\otimes
\varphi^*\omega_X^{-1}(-A)\big)= h^0 \big(\bar C,\omega_{\bar C}\otimes
\varphi^*\omega_X^{-1}\big)-3
\]
for every effective divisor $A$ of
degree $3$ on $\bar C$.
\end{compactenum}
Then 
$C$ is nodal.
Equivalently $V \subset \overline{V^{\xi,\delta}}$, 
with $\delta=p_a(\xi)-g$.
\end{theorem}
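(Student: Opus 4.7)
The overall plan is to combine Lemmas~\ref{L:AC} and \ref{L:comp1} to produce a dimension bound on $V$ in terms of sections of a line bundle on $\bar C$. Write $L := \omega_{\bar C}\otimes\varphi^*\omega_X^{-1}$, so that $\bar N_\varphi\cong L(-Z)$ by \eqref{E:mor4}. Let $B$ be the semi-normalization of a neighborhood of $[C]$ in $V$, and pull back to $B$ the modular family of morphisms from $M_g(X)$; choose a general point $b_0\in B$ over $[C]$. Since $V\subseteq\curves^\xi_X$ and the classifying map $q:B\to\curves^\xi_X$ is birational onto $V$, the differential $dq_{b_0}$ is injective. Lemma~\ref{L:AC} then shows that the preimage in $T_{b_0}B$ of the torsion $H^0(\cH_\varphi)\subset H^0(N_\varphi)$ is zero, and composing with the projection from \eqref{E:mor4.0} yields an injection
\[
T_{b_0}B \hookrightarrow H^0(\bar N_\varphi)=H^0(L(-Z)),
\]
whence $\dim V \le h^0(L(-Z))$.

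Part~\ref{T:nocuspI} is then immediate. Combined with hypothesis~(b), one has $h^0(L)\le\dim V\le h^0(L(-Z))\le h^0(L)$, forcing $h^0(L(-Z))=h^0(L)$; since $L$ is globally generated by (a), this forces $Z=0$, so $C$ is immersed. The chain of inequalities in fact collapses to an equality, so under the additional hypothesis of Part~\ref{T:nocuspII} one has $\dim V=h^0(L)=h^0(N_\varphi)$, and the injection $T_{b_0}B\hookrightarrow H^0(L)$ becomes an \emph{isomorphism}: every section of $L$ lifts to a first-order deformation of $\varphi$ inside $V$.

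For Part~\ref{T:nocuspII}, argue by contradiction and suppose $C$ has a non-nodal singularity at some point $q$. Since $C$ is immersed, $q$ is either (A) a point with $m\ge 3$ smooth branches, or (B) a point with two smooth branches sharing a common tangent line $T$. Let $p_1,\dots,p_m$ denote the preimages of $q$ on $\bar C$; from $0\to T_{\bar C}\to\varphi^*T_X\to N_\varphi\to 0$ the fiber $L|_{p_i}=N_\varphi|_{p_i}$ is canonically identified with $T_qX/T_i$, where $T_i=d\varphi_{p_i}(T_{p_i}\bar C)$. By Proposition~\ref{P:gen-equising}, after shrinking $V$ we may assume it is equisingular, so every deformation arising from $T_{b_0}B$ must preserve the incidence pattern at $q$; infinitesimally this requires the existence of a common vector $v\in T_qX$ with $\sigma(p_i)\equiv v\pmod{T_i}$ for each $i$. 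In case (A), the three affine lines $\sigma(p_i)+T_i\subset T_qX\simeq\C^2$ must share a common point, a codimension-$1$ linear condition on $L|_{p_1}\oplus L|_{p_2}\oplus L|_{p_3}$; by hypothesis (c) the evaluation $H^0(L)\to L|_{p_1+p_2+p_3}$ is surjective, so this pulls back to a proper codimension-$1$ subspace of $H^0(L)$. In case (B), since $T_1=T_2=T$, the condition reduces to $\sigma(p_1)=\sigma(p_2)$ in the common quotient $T_qX/T$, a codimension-$1$ condition on $L|_{p_1}\oplus L|_{p_2}$; hypothesis (c) implies separation of the two points $\{p_1,p_2\}$ (apply it to $A=p_1+p_2+r$ for a generic $r$), so this again pulls back to codimension $1$ on $H^0(L)$. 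In either case, the equality $T_{b_0}B=H^0(L)$ just established would force $H^0(L)$ into a proper subspace of itself, a contradiction; hence $C$ is nodal. The main delicate step is this last one: translating the geometric condition ``the singularity type at $q$ is infinitesimally preserved'' into a genuine, nonzero linear condition on the fiber values $\sigma(p_i)$, so that hypothesis (c) can bite.
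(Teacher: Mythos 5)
Your proposal is correct and follows the paper's strategy almost step for step: the bound $\dim V \le h^0(\bar N_\varphi)=h^0\bigl(\omega_{\bar C}\otimes\varphi^*\omega_X^{-1}(-Z)\bigr)$ obtained from the injectivity of the classifying differential together with Lemma~\ref{L:AC}, the collapse of inequalities under \ref{c:bpf} and \ref{c:dim} forcing $Z=0$, and the use of \ref{c:3ample} at the preimages of a non-nodal point, split into the same two cases (three pairwise transverse branches, resp.\ two tangent branches). The only genuine divergence is how the final contradiction in \ref{T:nocuspII} is reached. The paper uses the equality $\dim V=h^0(N_\varphi)=\chi(N_\varphi)$ to conclude that $\mathrm{Def}_{\varphi/X}$ is unobstructed, integrates the specific section supplied by \ref{c:3ample} (vanishing at two preimages and not at the third, resp.\ moving one branch off the common tangent) to an actual deformation, and observes that the nearby curves have strictly milder singularities, contradicting the generality of $[C]$ in $V$. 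You instead stay at first order: equisingularity forces every tangent vector of $V$ into the proper linear subspace of $\H^0(\omega_{\bar C}\otimes\varphi^*\omega_X^{-1})$ cut out by the incidence condition $\sigma(p_i)\equiv v\ (\mathrm{mod}\ T_i)$, contradicting the surjectivity of $T_{[C]}V\to \H^0(\omega_{\bar C}\otimes\varphi^*\omega_X^{-1})$. This variant dispenses with unobstructedness, but shifts the weight onto the claim you yourself flag as delicate, namely that a first-order equisingular deformation satisfies the incidence condition; it does hold --- at the general, hence smooth, point of $V$ every tangent vector is the velocity of an arc along which the family is equisingular by Proposition~\ref{P:gen-equising}, and differentiating the section through the singular point required by Definition~\ref{d:equising} produces the vector $v$ --- but this justification should be spelled out rather than asserted. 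Two small precisions: in case (A) the ``common point'' condition is codimension exactly $1$ only when the three tangents $T_i$ are pairwise distinct (if two coincide you are really in case (B), and in any event the codimension is $\ge 1$, which suffices); and the simultaneous resolution over $B$ needed to define the classifying map to $M_g(X)$ is licit because a neighbourhood of the general point $[C]$ in $V$ is already smooth, hence normal, so Theorem~\ref{T:simult} applies.
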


\begin{proof}
For simplicity we give the proof in the case $g \ge 2$. 
Assume by contradiction that the curve $C$ has (generalized) cusps. 
This is equivalent to the fact that $Z \ne 0$,
where $Z\subset \bar C$ is the zero divisor of 
the differential of $\varphi$.
By generality, $[C]$ is a smooth point of $V$, so we may (and will)
assume without loss of generality that $V$ is smooth.
As in the proof of Lemma~\ref{L:comp1}, it follows from 
Theorem~\ref{T:simult} that there is a simultaneous resolution of
singularities
\[\xymatrix@=15pt{
\bar {\cC} \ar[dr] _{\bar \pi} \ar[r] ^\Phi
& \cC \ar[d] ^\pi \\
& V
}\]
of the universal family of curves over $V$.
This is a deformation of the morphism $\varphi$, so we have a 
characteristic morphism $p: V \to M_g(X)$. The differential
\[
dp_{[\varphi]} : T _{[C]} V \to \H^0 (\bar C, N _\varphi)
\]
is injective because to every tangent vector $\theta \in T _{[C]} V$
corresponds a non-trivial deformation of $C$.
On the other hand, it follows from Lemma~\ref{L:AC} that
the intersection $\operatorname{Im} dp_{[\varphi]} \cap 
\H^0 (\bar C, \mathcal {H} _{\varphi})$ is trivial.
Eventually, we thus have
\begin{equation*}
\dim V
= \dim T _{[C]} V
\leq h^0 (\bar C, \bar N _\varphi )
= \h^0(\omega_{\bar C}\otimes \varphi^*\omega_X^{-1}(-Z)).
\end{equation*}
By assumption \ref{c:bpf}, this implies $\dim V < 
\h^0(\omega_{\bar C}\otimes \varphi^*\omega_X^{-1})$, a
contradiction. 
This proves \ref{T:nocuspI}.

Assume next that \ref{c:3ample} is also satisfied
and, by contradiction, that $C$ is not nodal.
We shall show 
along the lines of \cite[pp.~97--98]{AC80}
that it is then possible to deform $C$ into curves with
milder singularities, which contradicts the generality of $C$ in $V$
and thus proves \ref{T:nocuspII}.
First note that since $C$ is immersed by \ref{T:nocuspI},
one has $N _\varphi = {\bar N} _\varphi$,
so that condition \ref{c:dim} implies the smoothness  
of the  scheme of morphisms $M_g(X)$ at a point $[\varphi]$ parametrizing $\varphi$,
the tangent space at this point being
\[
\H^0( N _\varphi )
= \H^0( \omega_{\bar C} \otimes \varphi^* \omega _X ^{-1} ).
\]

The assumption that $C$ is not nodal means that there is a point
$x \in C$ at which $C$ has 
\begin{inparaenum}
\renewcommand{\theenumi}{\normalfont (\roman{enumi})}
either \item
\label{case:triple} (at least) $3$ local branches meeting transversely,
or \item 
\label{case:tacnode} (at least) $2$ local branches tangent to each other.
\end{inparaenum}
In case \ref{case:triple}, there are three
pairwise distinct points $p,q,r \in \bar {C}$ such that
$\varphi(p)=\varphi(q)=\varphi(r)=x$.
It follows from condition~\ref{c:3ample} that there exists
a section $\sigma \in \H^0(N_{\varphi})$ such that 
$\sigma(p) =\sigma(q) =0$ and $\sigma(r) \neq 0$. 
Such a section corresponds to a first--order deformation
(hence, by smoothness, to an actual deformation) of $\varphi$
leaving both $\varphi(p)$ and $\varphi(q)$ fixed
while moving $\varphi(r)$: 
it therefore turns the triple point constituted at $x$ by the $3$
local branches of $C$ under consideration into $3$ nodes.
In particular it is not equisingular, a contradiction.

In case~\ref{case:tacnode},
there are $2$ distinct points $p,q \in \bar {C}$, such
that $\varphi(p)=\varphi(q)$, and 
$\im (d\varphi _p) = \im (d\varphi _q)$,
and it follows from condition~\ref{c:3ample} that there exists
a section $\sigma \in \H^0(N_{\varphi})$ such that 
$\sigma (p) =0$ and $\sigma (q) \not\in \im (d\varphi _p)$.
The corresponding deformation of $C$ leaves $\varphi (p)$ fixed
and moves $\varphi (q)$ in a direction transverse to the common
tangent to the $2$ local branches of $C$ under consideration
(if the $2$ branches of $C$ are simply tangent, the tacnode they
constitute at $x$ is turned into $2$ nodes).
It is therefore not equisingular either, a contradiction also
in this case.
\end{proof}

\paragraph{}
In many cases the conditions considered in Theorem~\ref{T:nocusp} are
not satisfied: this 
tends to happen when $\omega _X^{-1}$ is not positive enough.

\subparagraph{}
Clearly enough, \ref{c:bpf} does not hold in general.
Critical occurences of this phenomenon are to be observed for rational
curves on $K3$ surfaces (Remark~\ref{R:K3})
and for anticanonical rational curves on a degree $1$ Del Pezzo
surface (Remark~\ref{R:delpezzo}).
In these two situations, the conclusion of Theorem~\ref{T:nocusp} 
is not true in general.

\subparagraph{}
There can also be actual obstructions to deform the normalization of
the general member of $V$
and then \ref{c:dim} does not hold, see
Remark~\ref{Ex:nonredM} and Example~\ref{Ex:cayleybacharach}.
The conclusion of Theorem~\ref{T:nocusp} is not true 
for this example.

\begin{remark}
\label{R:toostrong}
Condition \ref{c:3ample}, albeit non-redundant
(see \ref{warn}), is too strong, as
the following example shows.
Let $(X,L)$ be a very general primitively polarized $K3$ surface, 
with $L^2 = 12$. It follows from Proposition~\ref{P:K3-gnlzddivs} that
the general element $C$ of every irreducible component of $V _{L,4}$ is 
nodal.
On the other hand, having genus $4$ the curve $\bar C$ is trigonal,
\ie there exists an effective divisor of degree $3$ on $\bar C$ 
such that $\h^0\bigl( \O _{\bar C}(A) \bigr) = 2$,
whence
\[
\h^0\bigl( \omega _{\bar C}(-A) \bigr) = 2
> \h^0\bigl( \omega _{\bar C} \bigr) -3 = 1,
\]
and condition~\ref{c:3ample} does not hold.

A finer analysis is required in order to get the right condition.
The approach described in section~\ref{S:cartesian} might provide a
possibility for doing so.
\end{remark}

The following result provides a convenient way to apply
Theorem~\ref{T:nocusp}.

\begin{corollary}\label{C:nocusp1}
Assume that $V\subset V^\xi_g$ is an irreducible component and let
$[C]\in V$ be  general. If
$\omega_{\bar{C}}\otimes\varphi^*\omega_X^{-1}$ is non-special and
base--point--free then $C$ has no cusps. If moreover
\begin{equation}\label{E:nocusps1}
\mathrm{deg}(\omega_{\bar{C}}\otimes\varphi^*\omega_X^{-1})\ge
2g+2
\end{equation}
then $C$ is nodal.
\end{corollary}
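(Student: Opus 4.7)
The plan is to verify that the hypotheses of the corollary imply conditions~\ref{c:bpf}--\ref{c:dim} (and, for the nodality statement, also \ref{c:3ample}) of Theorem~\ref{T:nocusp}, and then invoke the corresponding parts of that theorem. Throughout I set $L := \omega_{\bar C} \otimes \varphi^*\omega_X^{-1}$.

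Condition~\ref{c:bpf} is exactly the assumed base--point--freeness of $L$. For \ref{c:dim}, the strategy is to show that both $h^0(\bar C, L)$ and $\dim V$ equal the numerical invariant $g - 1 - K_X \cdot C$. On the one hand, applying Riemann--Roch together with the non--speciality of $L$ gives
\[
h^0(L) = \deg L + 1 - g = (2g - 2 - K_X\cdot C) + 1 - g = g - 1 - K_X\cdot C.
\]
On the other hand, Lemma~\ref{L:comp1} gives $\dim V = \dim R_\varphi$, and combining the deformation--theoretic bound~\eqref{E:mor3} with the computation~\eqref{E:mor2.1} applied in the surface case $n = 2$ yields
\[
\dim R_\varphi \geq \chi(N_\varphi) = -K_X\cdot C + (2-3)(1-g) = g - 1 - K_X\cdot C.
\]
The two quantities match, so \ref{c:dim} holds, and Theorem~\ref{T:nocuspI} forces $C$ to be immersed.

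For the nodality statement I add a check of \ref{c:3ample}. Under $\deg L \geq 2g+2$, every effective divisor $A$ of degree $3$ on $\bar C$ produces a twist $L(-A)$ of degree $\geq 2g-1$, hence non--special; Riemann--Roch then gives
\[
h^0(L(-A)) = \deg L - 3 + 1 - g = h^0(L) - 3,
\]
which is exactly \ref{c:3ample}. An application of Theorem~\ref{T:nocuspII} then yields that $C$ is nodal.

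There is no substantive obstacle: the argument is a tight matching of numerics obtained from Riemann--Roch on $\bar C$, the deformation--theoretic lower bound for $\dim R_\varphi$, and the standard degree criterion for separation of points on a smooth curve. The only point deserving attention is that the lower bound~\eqref{E:mor3} is phrased via $\chi(N_\varphi)$, which a priori involves the torsion sheaf $\cH_\varphi$ supported on the ramification divisor $Z$, whereas \ref{c:dim} is stated in terms of $h^0$ of the full line bundle $L$ rather than of the locally free quotient $\bar N_\varphi = L(-Z)$; the two nevertheless agree, because the Euler characteristic of $\cH_\varphi$ compensates exactly the degree drop from $L$ to $\bar N_\varphi$.
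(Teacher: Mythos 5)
Your proposal is correct and follows essentially the same route as the paper: condition \ref{c:bpf} is the hypothesis, condition \ref{c:dim} follows from non--speciality via $h^0(\omega_{\bar C}\otimes\varphi^*\omega_X^{-1})=\chi(N_\varphi)\le\dim R_\varphi=\dim V$ (using \eqref{E:mor2.1}, \eqref{E:mor3} and Lemma~\ref{L:comp1}), and condition \ref{c:3ample} follows from the degree bound \eqref{E:nocusps1} by Riemann--Roch. The paper's proof is just a terser version of the same numerics, and your closing remark about the torsion sheaf $\cH_\varphi$ compensating the twist by $Z$ is exactly the content already packaged in \eqref{E:mor2.1}.
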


\begin{proof}
Condition \ref{c:bpf} of the theorem is satisfied by hypothesis. The
non-speciality of $\omega_{\bar{C}}\otimes\varphi^*\omega_X^{-1}$
implies that
\[
\chi(N_\varphi) \ge
h^0(\omega_{\bar{C}}\otimes\varphi^*\omega_X^{-1})
\]
and therefore also condition \ref{c:dim} is satisfied, thanks to
(\ref{E:mor3}). The last assertion is clear because
(\ref{E:nocusps1}) implies that condition \ref{c:3ample}
is also satisfied.
\end{proof}

\section{A Cartesian approach}
\label{S:cartesian}

The situation and notations are as set-up in 
subsection~\ref{s:setting}.

\subsection{Ideals defining tangent spaces}

\paragraph{}
Let $C$ be a reduced curve in the surface $X$.
We consider the sequence of sheaves of
ideals of $\O_C$
\[
  J \subseteq I \subseteq A \subseteq   \O_C,
\]
where:
\begin{compactenum}
[\normalfont (\arabic{section}.\arabic{paragraph}.1)]
    \item $J$ is the jacobian ideal:
it is locally generated by the partial derivatives of a
local equation of $C$;
    \item $I$ is the equisingular ideal \cite{jW74}: it does not have
      any non-deformation-theoretic interpretation;
    \item $A$ is the adjoint ideal:
it is the conductor 
$\cond \nu := \shhom _{\O_C} (\nu_* \O_{\bar C}, \O_C)$
of the normalization
$\nu:\bar{C} \to C$ of $C$.
\end{compactenum}

\paragraph{}
Being $\nu$ birational, $\cond \nu$ is 
the annihilator ideal
$\shann _{\O_C} \bigl( \nu_* \O_{\bar C} / \O_C \bigr)$.
It follows that 
$A\subset \O_C$ is also a sheaf of ideals of
$\nu_*\O_{\bar{C}}$,
which implies that there exists an effective divisor $\bar{\Delta}$ on
$\bar{C}$ such that
\begin{equation}\label{E:adj1}
A \cong \nu_*\left(\O_{\bar{C}}(-\bar{\Delta})\right).
\end{equation}
Moreover, we have
\begin{equation}\label{E:adj2}
\omega_{\bar{C}} = \nu^*(\omega_C)\otimes\O_{\bar{C}}(-\bar{\Delta}).
\end{equation}

\begin{lemma}
\label{L:indep-adj}
For $i=0,1$, one has
 \[
\H^i\bigl(C,A\otimes\O_C(C)\bigr) \cong
\H^i\bigl(\bar{C}, \omega_{\bar{C}}\otimes
    \varphi^*\omega_X^{-1}\bigr),
    \]
where $\varphi:\bar C \to X$ is the composition of the normalization
map $\nu$ with the inclusion $C \subset X$.
\end{lemma}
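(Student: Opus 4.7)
The plan is to directly identify the sheaf $A \otimes \O_C(C)$ on $C$ with the pushforward $\nu_*\bigl(\omega_{\bar C} \otimes \varphi^* \omega_X^{-1}\bigr)$, after which the conclusion follows from the finiteness of $\nu$. All the ingredients are already in place in the excerpt; the proof is really just a bookkeeping exercise combining the two formulas \eqref{E:adj1} and \eqref{E:adj2} with the adjunction formula on $X$ and the projection formula for $\nu$.

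First I would rewrite \eqref{E:adj2} as $\O_{\bar C}(-\bar\Delta) \cong \omega_{\bar C} \otimes \nu^* \omega_C^{-1}$ and plug this into \eqref{E:adj1}, obtaining
\[
A \;\cong\; \nu_*\bigl( \omega_{\bar C} \otimes \nu^* \omega_C^{-1} \bigr).
\]
Tensoring with $\O_C(C)$ and applying the projection formula (which is valid here because $\nu$ is finite, so everything in sight is $\nu_*$ of a coherent sheaf) yields
\[
A \otimes \O_C(C) \;\cong\; \nu_*\bigl( \omega_{\bar C} \otimes \nu^*(\omega_C^{-1} \otimes \O_C(C)) \bigr).
\]

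Next I would invoke the adjunction formula $\omega_C \cong \omega_X(C) \otimes \O_C$ on the smooth surface $X$, which gives $\omega_C^{-1} \otimes \O_C(C) \cong \omega_X^{-1}|_C$. Since $\varphi = \iota \circ \nu$ where $\iota: C \hookrightarrow X$ is the inclusion, we get $\nu^*(\omega_X^{-1}|_C) \cong \varphi^*\omega_X^{-1}$, and hence
\[
A \otimes \O_C(C) \;\cong\; \nu_*\bigl( \omega_{\bar C} \otimes \varphi^*\omega_X^{-1} \bigr).
\]

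Finally, since $\nu$ is a finite morphism, the higher direct images $R^j\nu_*$ vanish for $j \geq 1$, and the Leray spectral sequence degenerates to give $\H^i(C, \nu_* \F) \cong \H^i(\bar C, \F)$ for any coherent sheaf $\F$ on $\bar C$ and all $i$. Applying this with $\F = \omega_{\bar C} \otimes \varphi^* \omega_X^{-1}$ yields the desired isomorphism for both $i=0$ and $i=1$. There is no real obstacle: the only thing to keep straight is that the factor $\O_C(C)$ on the left exactly cancels the twist by $\omega_C$ coming from the discrepancy divisor $\bar\Delta$, leaving $\omega_X^{-1}$ as promised.
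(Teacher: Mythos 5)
Your proof is correct and follows essentially the same route as the paper's: both combine \eqref{E:adj1}, \eqref{E:adj2}, the adjunction formula $\omega_C = \O_C(C)\otimes\omega_X$ and the projection formula to identify $A\otimes\O_C(C)$ with $\nu_*\bigl(\omega_{\bar C}\otimes\varphi^*\omega_X^{-1}\bigr)$, and then use the vanishing of $R^1\nu_*$ for the finite map $\nu$ to transfer cohomology. The only (harmless) cosmetic difference is that you carry out the identification at the level of sheaves before taking cohomology, whereas the paper does the $i=0$ case directly on global sections and invokes Leray only for $i=1$.
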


\begin{proof}
By \eqref{E:adj1} and the projection formula, we have
\begin{align*}
\H^0\bigl(C,\O_C(C)\otimes A\bigr)
&= \H^0\bigl(C,\O_C(C)\otimes\nu_*\bigl(\O_{\bar{C}}(-\bar{\Delta})
  \bigr) \bigr)\\
&= \H^0\bigl(\bar{C},\nu^*\O_C(C)\otimes
\O_{\bar{C}}(-\bar{\Delta}) \bigr).
\end{align*}
 By \eqref{E:adj2} and the adjunction formula $\omega_C=\O_C(C)\otimes
 \omega_X$, we have
 \[
 \nu^*\O_C(C)\otimes \O_{\bar{C}}(-\bar{\Delta}) =
 \nu^*\omega_C\otimes \varphi^*\omega_X^{-1}\otimes 
\O_{\bar{C}}(-\bar{\Delta})=
 \omega_{\bar{C}}\otimes \varphi^*\omega_X^{-1},
 \]
 and  the  statement  follows  in  the
  case  $i=0$. 
For  the  second  identity,  observe 
that
 $R^1\nu_* ( \omega_{\bar{C}}\otimes \varphi^*\omega_X^{-1} )=0$,
hence
 \[
\H^1\bigl(C,\O_C(C)\otimes A\bigr)=
\H^1\bigl(C,\nu_* ( \omega_{\bar{C}}\otimes
\varphi^*\omega_X^{-1} ) \bigr) =
\H^1\bigl(\bar{C},  \omega_{\bar{C}}\otimes
\varphi^*\omega_X^{-1}\bigr)
 \]
by Leray's spectral sequence.
\end{proof}

\paragraph{}
Let $\xi \in \NS(X)$ be the class of $C$.
From the functorial identification of
$\operatorname{\mathit T} _{[C]} \curves ^\xi_X$ with
$\H^0 \bigl( C, \O_C(C) \bigr)$ 
we may deduce the sequence of inclusions
\begin{equation}
\label{E:seq-tangents}
\H^0\bigl(C,J\otimes\O_C(C)\bigr) \subseteq
\H^0\bigl(C,I\otimes\O_C(C)\bigr) \subseteq
\H^0\bigl(C,A\otimes\O_C(C)\bigr) \subseteq
\operatorname{\mathit T} _{[C]} \curves ^\xi_X ,
\end{equation}
which has the following deformation-theoretic interpretation.

\begin{proposition}
[{\cite[Prop.~4.19]{DH88}}]

\subparagraph{}
\label{fact:loc-triv}
$\H^0(C,J\otimes\O_C(C))$ is the tangent space at $[C]$ to
    the subscheme of $\curves ^\xi_X$  of formally locally trivial
    deformations of $C$.

\subparagraph{}
\label{fact:equising}
$\H^0(C,I\otimes\O_C(C))$ is the tangent space at
    $[C]$ to $\ES (C)$.
In particular,
    \[
    \dim_{[C]}ES(C) \le h^0(C,I\otimes\O_C(C)).
    \]

\subparagraph{}
\label{fact:equigen}
$\H^0(C,A\otimes\O_C(C))$ contains the reduced tangent
    cone to $\V ^\xi _{g(C)}$  at $[C]$.
    In particular,
    \[
    \dim_{[C]}\V_{L,g} \le h^0(C,A\otimes\O_C(C))
= h^0\left( \bar C,
\omega_{\bar C} \otimes \varphi^* \omega_X^{-1} \right).
    \]
\end{proposition}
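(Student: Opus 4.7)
The plan exploits a common local--to--global pattern shared by all three parts: at each singular point $x$ of $C$, with local equation $f=0$ for $C \subset X$, the three ideals $J \subset I \subset A$ of $\O_C$ cut out (inside the quotient $\O_{X,x}/(f) \cong \O_{C,x}$) the tangent directions corresponding to formally locally trivial, equisingular, and equigeneric local deformations of the germ $(C,x) \subset (X,x)$. The global tangent spaces, or tangent cones, are then assembled by twisting with $\O_C(C)$ and taking $\H^0$, via the standard identification $T_{[C]}\curves^\xi_X \cong \H^0(C,\O_C(C))$.

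For \ref{fact:loc-triv}, the underlying local fact is classical: a first--order perturbation $f + t\sigma$ of the local equation is analytically trivial modulo $t^2$ if and only if $\sigma \in (f_x, f_y) + (f)$, and globalizing yields the statement. For \ref{fact:equising}, the assertion is essentially the defining property of Wahl's sheaf $I$ from \cite{jW74}: by construction, $\ES(C)$ is the subscheme of $\curves^\xi_X$ representing the equisingular deformation functor of $C$, and its tangent space at $[C]$ is exactly $\H^0(C, I \otimes \O_C(C))$; the dimension inequality follows because $\ES(C)$ need not be smooth at $[C]$.

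The main obstacle is \ref{fact:equigen}, since $V^\xi_{g(C)}$ is a priori only a locally closed subset, not a subscheme, so one must argue directly on the reduced tangent cone. The plan is to take an arbitrary analytic arc $\gamma : (\Delta, 0) \to \curves^\xi_X$ with $\gamma(0) = [C]$ whose image lies in $V^\xi_{g(C)}$, and to show that its velocity at $0$ lies in $\H^0(C, A \otimes \O_C(C))$. After restricting to and normalizing the base of (the image of) $\gamma$, Theorem~\ref{T:simult} produces a simultaneous resolution of the pulled--back family, which amounts to a deformation of the normalization morphism $\varphi: \bar C \to X$ lifting $\gamma$. The associated infinitesimal deformation lives in $\H^0(\bar C, N_\varphi)$, and by Lemma~\ref{L:AC} its torsion part is killed under the induced map to $T_{[C]}\curves^\xi_X$. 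Hence the velocity of $\gamma$ lies in the image of $\H^0(\bar C, \bar N_\varphi)$, which is identified with $\H^0(C, A \otimes \O_C(C))$ and with $\H^0(\bar C, \omega_{\bar C} \otimes \varphi^*\omega_X^{-1})$ via Lemma~\ref{L:indep-adj}, completing the plan.
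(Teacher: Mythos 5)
The paper does not actually prove this proposition: it delegates \ref{fact:loc-triv} to Artin--Schlessinger via \cite{DH88}, \ref{fact:equising} to \cite{jW74,bT80} via \cite[Prop.~3.3.1]{fL87}, and \ref{fact:equigen} to Teissier \cite{bT77}, only supplying the last equality via Lemma~\ref{L:indep-adj}. Your reconstructions of \ref{fact:loc-triv} and \ref{fact:equising} are consistent with what those references establish (the local computation $f+t\sigma$ trivial iff $\sigma\in(f_x,f_y)+(f)$ is the right one, and the second part is indeed essentially the globalization of Wahl's description of the equisingular functor), so no objection there beyond the fact that you are asserting rather than re-deriving the globalization.

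For \ref{fact:equigen}, however, there is a genuine gap. The reduced tangent cone to $\V^\xi_{g}$ at $[C]$ is the set of directions of \emph{leading coefficients} of arcs $\gamma(t)=[C]+t^k v+O(t^{k+1})$ in $\overline{\V^\xi_g}$, not the set of velocities $\gamma'(0)$; these coincide only for arcs with $k=1$. Your argument runs entirely through the first-order formalism: the classifying map to $M_g(X)$, the section of $\H^0(\bar C,N_\varphi)$, and Lemma~\ref{L:AC} all see only $\gamma'(0)$. But the tangent-cone formulation is needed precisely when $\overline{\V^\xi_g}$ is singular at $[C]$, and there the relevant arcs typically have $\gamma'(0)=0$ (think of a germ analytically like a cuspidal cubic: every arc through the singular point has vanishing velocity, the set of velocities is $\{0\}$, yet the tangent cone is a line and the local dimension is $1$). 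So as written your plan proves only that velocities of immersed arcs lie in $\H^0(C,A\otimes\O_C(C))$, which neither contains the tangent cone nor yields the dimension bound $\dim_{[C]}\V_{L,g}\le h^0(C,A\otimes\O_C(C))$; it essentially reproves the tangent-space estimate that the paper already obtains at a \emph{general} (smooth) point of $V$ in the proof of Theorem~\ref{T:nocusp}. The repair is Teissier's actual computation: writing the family locally as $F=f+t^kg+O(t^{k+1})$ and substituting the simultaneous parametrization $(x(s,t),y(s,t))$ furnished by Theorem~\ref{T:simult}, one gets $\nu^*g$ as the order-$k$ coefficient of $f(x(s,t),y(s,t))$, which lies in the ideal generated by $\nu^*f_x,\nu^*f_y$ over the family; combined with the classical relation between the pulled-back Jacobian ideal and the conductor this gives $g\in A$. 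That higher-order step is not captured by $\H^0(N_\varphi)$ and Lemma~\ref{L:AC}, and is the missing idea.
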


As is the case for $\ES(C)$, 
the subscheme of $\curves ^\xi_X$  of formally locally trivial
deformations of $C$ is functorially defined \cite{jW74b};
in contrast, $\V ^\xi _g$ is only set-theoretically defined.

\ref{fact:loc-triv} is based on results of Artin and Schlessinger
respectively; since we will not use this, we refer to \cite{DH88} for
the precise references.
\ref{fact:equising} follows from \cite{bT80,jW74}, as explained in
\cite[Prop.~3.3.1]{fL87}.
\ref{fact:equigen} stems from \cite{bT77}
(the last equality comes from Lemma \ref{L:indep-adj}).
\qed

\bigskip
The next result is 
conceptually important: it explains why one would envisage an
affirmative answer to Problem~\ref{Pb:main} in the first place.
\begin{proposition}
[{\cite{oZ82}}]
\label{P:notnodal}
Let $(C,p)$ be a reduced planar curve germ,
and consider the local ideals $I_p \subseteq A_p \subseteq
\O_{C,p}$. Then $I_p=A_p$  if and only if $p$ is a node.
\end{proposition}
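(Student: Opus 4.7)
The proof splits into two directions.

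For the ``if'' direction, assume $p$ is a node and pick formal local coordinates $(x,y)$ in which $C$ has equation $xy = 0$ at $p$, so that $\O_{C,p} = \C[[x,y]]/(xy)$. The Jacobian ideal is then $J_p = (x, y) = \mathfrak{m}_p$. The normalization $\nu$ has two preimages of $p$ corresponding to the branches $\{x=0\}$ and $\{y=0\}$, so $\nu_*\O_{\bar C,p} \cong \C[[x]] \oplus \C[[y]]$ with $\O_{C,p}$ embedded via the two quotients, and a direct computation gives $\cond \nu = \mathfrak{m}_p$ as well. Since $J_p \subseteq I_p \subseteq A_p$ always holds, we obtain $I_p = A_p = \mathfrak{m}_p$.

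For the ``only if'' direction, suppose $(C, p)$ is singular but not a node; the aim is to produce an element of $A_p$ that does not lie in $I_p$. The strategy is to interpret $A_p/I_p$ as capturing, informally, first-order equigeneric deformations of the germ $(C, p)$ modulo the equisingular ones, this being the local counterpart of Propositions~\ref{fact:equising} and~\ref{fact:equigen}. Once that is in place, it suffices to exhibit an equigeneric deformation of the germ that is not equisingular. Such a deformation is classical: by Teissier's theory of the $\delta$-constant stratum in versal unfoldings \cite{bT77,bT80}, every planar curve singularity with $\delta_p \geq 1$ admits a one-parameter equigeneric deformation whose nearby fibres carry $\delta_p$ distinct ordinary nodes. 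For non-nodal $(C,p)$, the generic deformed fibre then has a different topological type than $(C,p)$, so the deformation is not equisingular and its first-order direction yields a nonzero class in $A_p/I_p$.

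The main obstacle is the rigorous justification of the local first-order interpretation of $A_p/I_p$. A concrete way to bypass this is to globalize: realize $(C,p)$ as the unique singularity of some curve $C \subset X$ of class $\xi$ on an appropriately chosen smooth surface $X$, and compare $\dim_{[C]} V^\xi_g$ with $\dim_{[C]} \ES(C)$ using the global Propositions~\ref{fact:equising} and~\ref{fact:equigen}. The nodal-smoothing deformation supplies $\dim_{[C]}V^\xi_g > \dim_{[C]}\ES(C)$, which by the inclusions of tangent spaces in~\eqref{E:seq-tangents} together with Lemma~\ref{L:indep-adj} forces the strict inclusion $\H^0(C, I\otimes \O_C(C)) \subsetneq \H^0(C, A\otimes \O_C(C))$, and hence $I_p \subsetneq A_p$ at the germ level.
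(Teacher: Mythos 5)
The paper does not actually prove this proposition: it quotes it from Zariski \cite{oZ82} and immediately refers to \cite[Thm.~3.3.2]{fL87} and \cite[Lemma~6.3]{DH88} for ``enlightening proofs'', so your proposal has to stand on its own. Your ``if'' direction is fine: for $xy=0$ one computes $J_p=A_p=\mathfrak{m}_p$, and the chain $J_p\subseteq I_p\subseteq A_p$ forces $I_p=A_p$ (consistent with Example~\ref{Ex:double} for $\mu=1$).

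The ``only if'' direction has a genuine gap, and it is the same gap in both routes you offer. Locally: the existence of an equigeneric, non-equisingular deformation (the deformation to $\delta$ nodes) does not by itself produce an element of $A_p\setminus I_p$. What the cited theory gives is that $I_p/J_p$ is \emph{equal} to the tangent space of the equisingular stratum in the semiuniversal deformation (Wahl \cite{jW74}), while $A_p/J_p$ merely \emph{contains} the tangent cone of the $\delta$-constant stratum (Teissier \cite{bT77}); a one-parameter equigeneric arc not contained in the equisingular stratum can still be tangent to it at the origin, so its first-order direction need not leave $I_p/J_p$. To conclude you need the quantitative fact that the $\delta$-constant stratum has codimension exactly $\delta=\dim_{\C}(\O_{C,p}/A_p)$ --- so that its tangent cone spans all of $A_p/J_p$ --- plus smoothness of the equisingular stratum and control of the components of the $\delta$-constant stratum; none of this appears in your write-up. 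The globalization makes things worse rather than better: by \eqref{E:seq-tangents} and Propositions~\ref{fact:equising} and \ref{fact:equigen} one only has the \emph{upper} bounds $\dim_{[C]}\ES(C)\le h^0\bigl(C,I\otimes\O_C(C)\bigr)$ and $\dim_{[C]}V^\xi_g\le h^0\bigl(C,A\otimes\O_C(C)\bigr)$, so the strict inequality $\dim_{[C]}V^\xi_g>\dim_{[C]}\ES(C)$ implies nothing about the two $h^0$'s; converting it into $h^0\bigl(C,I\otimes\O_C(C)\bigr)<h^0\bigl(C,A\otimes\O_C(C)\bigr)$ would require $\ES(C)$ to be smooth of dimension $h^0\bigl(C,I\otimes\O_C(C)\bigr)$ at $[C]$, which is precisely what can fail (cf.\ Wahl's non-reduced component and the other superabundant examples in Section~\ref{S:museum}). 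The proofs in \cite{fL87,DH88} are instead local and computational, comparing the colengths of $I_p$ and $A_p$ through the multiplicity sequence of the resolution; Examples~\ref{Ex:double} and \ref{ex:m-uple0} already carry this out for double points and ordinary multiple points, and a completion of your argument should proceed along those lines rather than through global dimension counts.
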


This also occurs as \cite[Thm.~3.3.2]{fL87} and \cite[Lemma~6.3]{DH88},
where enlightening proofs are provided.

\subsection{Effective computations}

Next, we collect some results enabling one to compute in practice the
ideals $A$ and $I$ which will be needed in the sequel.

\begin{lemma}
[{\cite[II.6--7]{szpiro}}]
\label{L:szpiro}
Let $C \subset X$ be a reduced curve in a smooth surface.
Consider a finite chain of birational morphisms
\begin{equation*}
X _{s+1} \xrightarrow{\epsilon_s} X _{s}
\to \cdots \to
X_2 \xrightarrow{\epsilon_1} X_1 = X
\end{equation*}
such that each $\epsilon_r$ is the blow-up of a single closed point
$q_r \in X _{r}$, with exceptional divisor $E_r$
($1 \leq r \leq s$).
Let furthermore
\begin{compactitem}[-]
\setlength{\itemindent}{3mm}
\item
$\epsilon _{s,r} = \epsilon _r \circ \cdots \circ \epsilon _{s}: 
X _{s+1} \to X_r$,
\item
$C_r$ be the proper transform of $C$ in $X_r$ ($C_1=C$),
and \item
$m_r$ be the multiplicity of $C_r$ at $q_r \in X _{r}$ .
\end{compactitem}
If the proper transform 
of $C$ in $X _{s+1}$ is smooth, then the adjoint ideal of $C$ is
\begin{equation*}
A_C =
(\epsilon _{s,1})_* \O _{X_s} \Bigl(
-(m_1 -1) \epsilon _{s,2} ^* E_1 -\ \cdots\ 
-(m _{s-1} -1) \epsilon _{s,s} ^* E _{s-1}
- (m_s -1) E_s
\Bigr)
\otimes _{\O_X} \O_C .
\end{equation*}
\end{lemma}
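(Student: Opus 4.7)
The plan is to use the adjunction formula \eqref{E:adj2} to reduce the problem to identifying the effective divisor $\bar\Delta$ on $\bar C = C_{s+1}$: by \eqref{E:adj1} this will give $A_C \cong \nu_*\O_{\bar C}(-\bar\Delta)$, after which the stated formula follows by pushing forward along $\epsilon_{s,1}$. The claim is local on $X$, so I may work in a neighbourhood of a single singular point of $C$.

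First I would determine $\bar\Delta$ explicitly by iterating the standard blow-up identities $K_{X_{r+1}} = \epsilon_r^*K_{X_r} + E_r$ and $\epsilon_r^*C_r = C_{r+1} + m_r E_r$ (the latter relying crucially on the fact that $m_r$ is the multiplicity of $C_r$ at $q_r$). With the convention $\epsilon_{s,s+1} = \mathrm{id}_{X_{s+1}}$, these combine to
\begin{equation*}
K_{X_{s+1}} + C_{s+1} = \epsilon_{s,1}^*(K_X + C) - \sum_{i=1}^s (m_i - 1)\, \epsilon_{s,i+1}^* E_i
\end{equation*}
as divisors on $X_{s+1}$. Restricting to the smooth curve $C_{s+1}$, adjunction on $X_{s+1}$ combined with $\nu^*\omega_C = \epsilon_{s,1}^*(\omega_X \otimes \O_X(C))\vert_{C_{s+1}}$ yields the identification $\bar\Delta = \sum_{i=1}^s (m_i - 1)\, \epsilon_{s,i+1}^* E_i \vert_{C_{s+1}}$.

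Second I would transfer the computation back to $X$. Setting $D = \sum_{i=1}^s (m_i-1)\,\epsilon_{s,i+1}^* E_i$ on $X_{s+1}$, so that $\O_{\bar C}(-\bar\Delta) = \O_{X_{s+1}}(-D)\vert_{C_{s+1}}$, I would apply $(\epsilon_{s,1})_*$ to the short exact sequence
\begin{equation*}
0 \to \O_{X_{s+1}}(-D - C_{s+1}) \to \O_{X_{s+1}}(-D) \to \O_{X_{s+1}}(-D) \otimes \O_{C_{s+1}} \to 0.
\end{equation*}
The cancellation $-D - C_{s+1} = K_{X_{s+1}} - \epsilon_{s,1}^*(K_X + C)$ coming from the first step rewrites the leftmost term as $\omega_{X_{s+1}} \otimes \epsilon_{s,1}^*(\omega_X^{-1}(-C))$; by the projection formula and the Grauert--Riemenschneider vanishing $R^i(\epsilon_{s,1})_*\omega_{X_{s+1}} = 0$ for $i > 0$ (applicable because $\epsilon_{s,1}$ is a proper birational morphism between smooth surfaces, in fact a composition of point blow-ups), pushing forward produces
\begin{equation*}
0 \to \O_X(-C) \to (\epsilon_{s,1})_*\O_{X_{s+1}}(-D) \to \nu_*\O_{\bar C}(-\bar\Delta) \to 0,
\end{equation*}
identifying $A_C$ with the image of $(\epsilon_{s,1})_*\O_{X_{s+1}}(-D) \otimes_{\O_X} \O_C$ inside $\O_C$, which is the formula asserted.

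The main obstacle, I expect, is the clean bookkeeping through the iterated tower of blow-ups: one must carefully distinguish each exceptional divisor $E_i \subset X_{i+1}$ from its total transform $\epsilon_{s,i+1}^*E_i$ on $X_{s+1}$, and verify that the multiplicities $m_i$ enter with matching coefficients in both the total transform of $C$ and the relative canonical divisor. The decisive arithmetic cancellation $D + C_{s+1} = \epsilon_{s,1}^*(K_X+C) - K_{X_{s+1}}$, which is precisely what brings the sheaf $\O_{X_{s+1}}(-D-C_{s+1})$ into a form to which Grauert--Riemenschneider applies, then emerges in essentially a single line from these bookkeeping formulas; the rest of the argument is routine.
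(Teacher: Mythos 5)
You should first be aware that the paper itself gives no proof of this lemma: it is imported wholesale from Szpiro's lecture notes (\cite[II.6--7]{szpiro}), so there is no internal argument to measure yours against. Your proof is the standard one and is essentially correct: the discrepancy computation $K_{X_{s+1}}+C_{s+1}=\epsilon_{s,1}^*(K_X+C)-\sum_i(m_i-1)\epsilon_{s,i+1}^*E_i$ is right, the identification of the conductor divisor $\bar\Delta$ with the restriction of $D=\sum_i(m_i-1)\epsilon_{s,i+1}^*E_i$ to $C_{s+1}$ is the correct mechanism, and the push-forward of the ideal-sheaf sequence via $(\epsilon_{s,1})_*\omega_{X_{s+1}}=\omega_X$ and $R^1(\epsilon_{s,1})_*\omega_{X_{s+1}}=0$ is exactly how one lands on the stated formula. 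Two points deserve explicit care. First, comparing \eqref{E:adj2} with the adjunction on $X_{s+1}$ a priori only shows that $\bar\Delta$ and $D|_{C_{s+1}}$ are \emph{linearly equivalent}; since the lemma asserts an equality of subsheaves of $\O_C$ and not merely an isomorphism class, you must note that both identifications are induced by the same canonical map (pull-back of meromorphic differentials, or a local argument at each singular point, where $\Pic$ is trivial and the conductor is computed directly) to upgrade this to an equality of effective divisors. Second, the expression $(\epsilon_{s,1})_*\O_{X_{s+1}}(-D)\otimes_{\O_X}\O_C$ must be read as the \emph{image} of that ideal sheaf in $\O_C$: the literal tensor product acquires a torsion kernel $\O_X(-C)/\bigl(\I_C\cdot(\epsilon_{s,1})_*\O_{X_{s+1}}(-D)\bigr)$ (already nonzero at a node), and your exact sequence computes the quotient by $\O_X(-C)$, not by $\I_C\cdot(\epsilon_{s,1})_*\O_{X_{s+1}}(-D)$ --- your closing sentence adopts precisely the correct reading, so this is a matter of interpretation of the statement rather than a flaw in your argument. (Note also that the $\O_{X_s}$ in the displayed formula of the lemma is a typo for $\O_{X_{s+1}}$, consistent with where your divisors actually live.)
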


\bigskip
As far as the equisingular ideal is concerned, we shall 
only need two special instances of \cite[Prop.~2.17]{GLSb},
and refer to loc.\ cit.\ \S\,II.2.2 
for further information.

\paragraph{}
Recall that a polynomial 
$f=\sum _{(\alpha,\beta) \in \N^2} a _{\alpha,\beta}\, x ^\alpha
y^\beta$
is said to be \emph{quasihomogeneous}
if there exist positive integers
$w_1,w_2,d$ such that
\[
\forall (\alpha,\beta) \in \N^2,\quad
a _{\alpha,\beta} \neq 0 \implies
w_1 \alpha + w_2 \beta =d.
\]
In such a case, $(w_1,w_2;d)$ is called the \emph{type} of $f$.
An isolated planar curve singularity $(C,0)$
is said to be \emph{quasihomogeneous}
if it is analytically equivalent to the singularity at the origin of a
plane affine curve defined by a quasihomogeneous polynomial $f$,
\ie if the complete local ring $\hat {\O} _{C,0}$ is isomorphic to
$\C [[x,y]] / \langle f \rangle$.

\begin{lemma}
[{\cite[Prop.~2.17]{GLSb}}] 
\label{L:I}
Let $f \in \C[x,y]$ be a quasihomogeneous polynomial of type
$(w_1,w_2;d)$, and consider the affine plane curve $C$ defined
by $f$.
If $C$ has an isolated singularity at the origin $0$,
then the local equisingular ideal of $C$ at $0$ is
\begin{equation*}
I = J +
\langle x ^\alpha y ^\beta \ 
|\ 
w_1 \alpha + w_2 \beta \geq d \rangle.
\end{equation*}
in the local ring $\O _{C,0}$
(where $J$ denotes as usual the Jacobian ideal 
$\langle \partial _x f, \partial _y f \rangle$).
\end{lemma}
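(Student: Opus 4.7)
The plan is to exploit the deformation-theoretic interpretation of $I$: the quotient $I/J$ is the tangent space at the origin of the equisingular (topologically trivial) stratum in the semiuniversal deformation of $(C,0)$, while $J$ controls the locally trivial deformations. Both inclusions between $I$ and $K:=J+\langle x^\alpha y^\beta\ |\ w_1\alpha+w_2\beta\ge d\rangle$ will be established separately. As a preliminary, Euler's identity for quasihomogeneous polynomials,
\[
w_1 x\, \partial_x f + w_2 y\, \partial_y f = d\, f,
\]
shows $f\in J$, so that all the objects under consideration make sense as ideals of $\O_{C,0}=\C[[x,y]]/\langle f\rangle$.

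For $K\subseteq I$, it suffices to exhibit an equisingular deformation of $(C,0)$ in the direction of each monomial $x^\alpha y^\beta$ with $e:=w_1\alpha+w_2\beta\ge d$. Consider $f_s:=f + s\,x^\alpha y^\beta$. If $e>d$, the rescaling $(x,y)\mapsto(\lambda^{w_1}x,\lambda^{w_2}y)$ sends $f_s$ to $\lambda^d\bigl(f+s\lambda^{e-d}x^\alpha y^\beta\bigr)$, yielding an analytic isomorphism between the germs of $f_s=0$ and $f_{s\lambda^{e-d}}=0$ at the origin; letting $\lambda$ vary, the family is then isotrivial and in particular equisingular. If $e=d$, the polynomial $f_s$ remains quasihomogeneous of type $(w_1,w_2;d)$, has an isolated singularity at the origin for $s$ near $0$ by upper semicontinuity of the Milnor number, and its Milnor number $(d/w_1-1)(d/w_2-1)$ depends only on the weights; hence the family has constant $\mu$, which by L\^{e}--Ramanujam for plane curves forces constant topological type.

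For the reverse inclusion, since $K\subseteq I$, it is enough to show the equality of dimensions $\dim_\C(\O_{C,0}/K)=\dim_\C(\O_{C,0}/I)$. The right-hand side is the equisingular Tjurina number $\tau^{es}(C,0)$; the left-hand side is the number of monomials representing a basis of $\O_{C,0}/J$ that have weighted degree strictly less than $d$. For quasihomogeneous singularities, K.~Saito's weighted decomposition of the Tjurina algebra $\O_{C,0}/J$ identifies its weighted-degree-$\ge d$ part with precisely the tangent space to the equisingular stratum, yielding the desired equality.

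The main obstacle is this last dimension match, or equivalently, the fact that any element of $I$ whose class modulo $J$ contains a nonzero component of weighted degree $<d$ must vanish in that range. Heuristically, such a low-weight component would lower the weighted initial form of $f$; by Newton non-degeneracy considerations and Kouchnirenko's formula, the corresponding deformation would drop the Milnor number and hence break equisingularity. Making this precise is the nontrivial content of the cited result, and relies on a careful analysis of the interaction between the $\mathbb{G}_m$-grading on the Tjurina algebra and the topological invariants of the singularity.
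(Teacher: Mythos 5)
The paper gives no proof of this lemma: it is imported wholesale from \cite[Prop.~2.17]{GLSb}, so there is no internal argument to measure yours against. Assessed on its own, your attempt proves one inclusion and leaves the other open. The inclusion $J+\langle x^\alpha y^\beta \mid w_1\alpha+w_2\beta\geq d\rangle\subseteq I$ is fine: the rescaling $(x,y)\mapsto(\lambda^{w_1}x,\lambda^{w_2}y)$ for weighted degree $e>d$, and for $e=d$ the constancy of $\mu=(d/w_1-1)(d/w_2-1)$ combined with the equivalence of $\mu$-constancy and equisingularity for plane curve germs, are both correct, modulo the standard comparison between topological triviality of a family over a reduced one-parameter base and Wahl's functorial notion of equisingularity (which you need in order to conclude that the first-order truncation of your family defines an element of $I$).

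The reverse inclusion $I\subseteq J+\langle x^\alpha y^\beta \mid w_1\alpha+w_2\beta\geq d\rangle$ is where the substance of the statement lies, and your write-up concedes as much: invoking Saito's weighted decomposition to identify the weight-$\geq d$ part of the Milnor algebra with the tangent space to the equisingular stratum is a restatement of the claim, not a proof of it, and your closing paragraph explicitly defers ``the nontrivial content of the cited result'' to the reference. What is actually needed is to show that a first-order deformation $f+\epsilon g$, with $g$ having a nonzero component of weighted degree $<d$ modulo $J$, cannot be equisingular; the argument in \cite{GLSb} does this by characterizing $I$ through equimultiplicity conditions at the essential infinitely near points of the resolution and computing these against the Newton diagram (the Kouchnirenko-type count you gesture at). Since that step is missing, the proof is incomplete --- though the gap is honestly flagged, and the half you do prove is the easier half.
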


\paragraph{}
Recall that simple curve singularities are those defined by one of the 
following equations:
\begin{alignat*}{2}
A_\mu: &\hspace{5mm}& y^2 -x^{\mu+1}&=0 \quad (\mu\geq 1) \\
D_\mu: && x(y^2-x^{\mu-2})&=0 \quad (\mu\geq 4) \\
E_6: && y^3 - x^4 &=0 \\
E_7: && y(y^2-x^3)&=0 \\
E_8: && y^3 - x^5 &=0.
\end{alignat*}

Simple singularities are quasihomogeneous,
and one obtains as a corollary of Lemma~\ref{L:I} that 
\emph{
the equisingular ideal $I$ of a simple singularity equals its
jacobian ideal $J$.}
This means that simple singularities do not admit non 
topologically trivial equisingular deformations.

\begin{example}
\label{Ex:double}
\emph{(double points)}
Any double point of a curve is a simple singularity of type $A _\mu$,
$\mu \geq 1$.
At such a point $p$, the adjoint and equisingular ideals are 
respectively
\begin{equation*}
A = \langle y, x ^{\lfloor \frac{\mu+1}{2} \rfloor} \rangle
\quad \text{and} \quad
I = \langle y, x ^\mu \rangle
\end{equation*}
in the local ring of the curve at $p$.
\end{example}

\begin{example}
\label{ex:m-uple0}
\emph{(ordinary $m$-uple points)}
Let $m$ be a positive integer. An ordinary $m$-uple point of a curve
is analytically equivalent to the origin in an affine plane curve
defined by an equation
\begin{equation}
\label{eq:m-uple}
f(x,y) =
f_m (x,y) + \tilde{f} (x,y)
= 0
\end{equation}
where $f_m$ is a degree $m$ homogeneous polynomial defining a smooth
subscheme of $\P^1$, and $\tilde{f}$ is a sum of monomials of degree
$>m$; such a polynomial $f$ is said to be 
\emph{semi-homogeneous}.
In particular, \cite[Prop.~2.17]{GLSb} applies to this situation,
and the adjoint and equisingular ideals at the origin of the curve
defined by \eqref{eq:m-uple} are respectively
\begin{equation*}
A = \langle x ^\alpha y ^\beta \
| \
\alpha+\beta \geq m-1 \rangle
\quad \text{and} \quad
I = \langle \partial _x f, \partial _y f \rangle
+ \langle x ^\alpha y ^\beta \
| \
\alpha+\beta \geq m \rangle
\end{equation*}
in the local ring of the curve
($A$ is computed with Lemma~\ref{L:szpiro}).
\end{example}

\subsection{Pull--back to the normalization}
\label{s:pull-back}

In this subsection, we discuss the possibility of proving 
Theorem~\ref{T:nocusp} by ``lifting'' the sequence of tangent spaces 
\eqref{E:seq-tangents} on the normalization of a general member of 
a maximal irreducible equigeneric family.
First of all, we would like to point out a fallacy:
we explain below why a certain line of argumentation does not enable
one to remove assumption~\ref{c:3ample} in \ref{T:nocuspII}. 
This incomplete argumentation is used in the proofs of
\cite[Prop.~2.1]{jH86} (last paragraph of p.448)
and of \cite[Lemma~3.1]{chen99} (last paragraph of the proof).
As indicated in \cite{jH86}, it is nevertheless possible to prove
\cite[Prop.~2.1]{jH86} using the parametric approach, see, e.g.,
\cite[p.105--117]{harris-morrison}. 
As for \cite[Lemma~3.1]{chen99} however, we do
not know of any valid proof.

\begin{warning}
\label{warn}
As in Theorem~\ref{T:nocusp}, consider an irreducible component
$V$ of $\V ^\xi _g$, and $[C]$ a general member of $V$, and assume
that conditions \ref{c:bpf} and \ref{c:dim} of \ref{T:nocusp}
hold. 
Suppose moreover that $C$ is not nodal;
this implies by \ref{P:notnodal} that $I_C \subsetneq A_C$.

Being $C\in V$ general, one has $T_{[C]}V \subset T_{[C]}\ES(C)$
by Proposition \ref{P:gen-equising}.
Therefore
\begin{multline}
\label{eq:warning}
\dim V 
\le 
\dim T_{[C]}V
\le 
\dim \bigl( \operatorname{\mathit{T}}_{[C]}\ES(C) \bigr) 
= 
\h^0 \bigl(C,I_C \otimes \O_C(C) \bigr) \\
\le 
\h^0 \bigl(\bar C,\nu ^* \bigl(I_C \otimes \O_C(C)\bigr) \bigr)
= \h^0  \bigl(\bar{C},I'\otimes
\omega_{\bar{C}}\otimes\varphi^*\omega_X^{-1} \bigr), 
\end{multline}
where $I'$ is the ideal of $\O _{\bar C}$ determined by the relation
$\nu^*I_C = I'\otimes\nu^*A_C$
(as usual, $\nu: \bar{C} \to C$ is the normalization of $C$
and $\varphi$ its composition with the inclusion 
$C \subset X$).

Now:
\emph
{although $\omega_{\bar{C}}\otimes\varphi^*\omega_X^{-1}$ is globally
  generated by our hypothesis \ref{c:bpf}
and $I_C \subsetneq A_C$ because $C$ is not nodal,
in general it does not follow from the sequence of inequalities
\eqref{eq:warning} that
$\dim V < 
h^0(\bar{C}, \omega_{\bar{C}}\otimes \varphi^*\omega_X^{-1})$,
\ie there is a priori no contradiction with assumption \ref{c:dim}.
}

The reason for this is that $\nu^* I_C$ and $\nu^* A_C$
may be equal even if $I_C$ and $A_C$ are not
(see Examples~\ref{ex:segre} and \ref{ex:m-uple} below).
In such a situation, $I'$ is trivial, and \eqref{eq:warning}
only gives $\dim V \leq 
h^0(\bar{C}, \omega_{\bar{C}}\otimes \varphi^*\omega_X^{-1})$.
Example~\ref{Ex:jacobian} displays a situation when both \ref{c:bpf}
and \ref{c:dim} hold, but the general member $C \in V$ is not nodal
(\ie conditions \ref{c:bpf} and \ref{c:dim} hold but the
conclusion of \ref{T:nocuspII} doesn't):
in this example one has
$\H^0(I_C \otimes \O_C (C)) = \H^0(A_C \otimes \O_C (C))$
although $\nu^* (A_C \otimes \O_C (C))$ is globally generated
and $I_C \subsetneq A_C$.
Therefore, condition~\ref{c:3ample} of \ref{T:nocuspII}
is not redundant.

With this respect, it is important to keep in mind 
that base--point--freeness of the linear system
$\nu^* \bigl| A \otimes \O_C (C) \bigr|$
on $\bar C$
does not imply base--point--freeness of the linear system
(of generalized divisors, see \ref{p:gnlzd-linsys}
below)
$\bigl| A \otimes \O_C (C) \bigr|$
on $C$.
And indeed, it is almost always the case that 
$\bigl| A \otimes \O_C (C) \bigr|$ has base points
(see Remark~\ref{r:basepoints}).

Also, note that 
the linear subsystem
$\nu ^* \bigl| I_C \otimes \O_C(C)\bigr|$ of
$\bigl|\nu ^* \bigl(I_C \otimes \O_C(C)\bigr) \bigr|$
is in general not complete
(see Example~\ref{ex:segre}),
in contrast with the fact that
$\nu^* \bigl| A \otimes \O_C (C) \bigr|
= \bigl| \nu^* \bigl( A \otimes \O_C (C) \bigr) \bigr|$
by independence of the adjoint conditions (Lemma~\ref{L:indep-adj}).
\end{warning}

\begin{example}\label{ex:segre}
\cite{S-tac}
Let $C \subset \P^2$ be a degree $n$ curve with one ordinary tacnode 
(\ie a singularity of type $A_3$) at a point $p$
and smooth otherwise.
At $p$, there are local holomorphic coordinates $(x,y)$ such that $C$ 
has equation $y^2 = x^4$. Then
\[
A _{C,p} = \langle y,x^2 \rangle
\quad \text{and} \quad
I _{C,p} = \langle y,x^3 \rangle,
\]
(see Example~\ref{Ex:double})
whence the linear system
$| A_C \otimes \O_C(C)|$
(resp. $| I_C \otimes \O_C(C)|$) on $C$
is cut out by the system of degree $n$ curves 
tangent at $p$ to the two local branches of $C$ there
(resp. having third order contact at $p$ with the reduced tangent
cone to $C$ there).

Now, a third order contact with the reduced tangent cone at $p$ does 
not imply anything beyond simple tangency with each of the two local
branches of $C$ there.
In coordinates, this translates into the fact that
\[
\nu^* A _{C,p_i} = 
\nu^* I _{C,p_i} = \langle t_i^2 \rangle
\]
at the two preimages $p_i$, $i=1,2$, of $p$, $t_i$ being a local
holomorphic coordinate of $\bar C$ at $p_i$.
Nevertheless the linear system $\nu^* | I_C \otimes \O_C(C)|$ has
codimension $1$ in $|\nu ^* \bigl(I_C \otimes \O_C(C)\bigr) |
= |\nu ^* \bigl(A_C \otimes \O_C(C)\bigr) |$
and is free from base point.
\end{example}

\begin{example}
\label{ex:m-uple}
We consider an ordinary $m$-uple planar curve singularity $(C,0)$ as
in Example~\ref{ex:m-uple0}.
Without loss of generality, we assume that the line $x=0$ is not
contained in the tangent cone of $C$ at $0$.
Then $x$ is a local parameter for each local branch, and
it follows from the computations of $A _{C,0}$ and $I _{C,0}$ in
Example~\ref{ex:m-uple0} that 
\[
\nu ^* A _{C,0} = \nu ^* I _{C,0}
= \langle x ^{m-1} \rangle,
\]
where $\nu$ is the normalization of $C$.
\end{example}

\paragraph{}
It might nevertheless be possible to use the argument given in 
\ref{warn} to give another proof of \ref{T:nocuspI},
\ie  that \ref{c:bpf} and \ref{c:dim} of 
Theorem~\ref{T:nocusp} imply that the general member of $V$ is
immersed.
We have indeed not found any example of a non immersed planar curve
singularity such that the pull--backs by the normalization of $I$ and
$A$ are equal. 
The next statement is a first step in this direction.

\begin{remark}
Let $(C,0)$ be a simple curve singularity, and $\nu$ its
normalization.
Then 
$\nu ^* A _{C,0} \neq \nu ^* I _{C,0}$
if and only if $(C,0)$ is not immersed.
\end{remark}

\begin{proof}
This is a basic computation. We treat the case of $E_8$, and leave the
remaining ones to the reader.%
\footnote{actually, they are hidden as comments in the .tex file}
The normalization $\nu$ of the $E_8$ singularity factors as
the sequence of blow--ups
\[\xymatrix@R=1mm{
\displaystyle{
\frac {\C[u_1,v_2]} {\langle u_1 - v_2 ^2 \rangle}
}
& \displaystyle{
\frac {\C[x,u_1]} {\langle x^2 - u_1 ^3 \rangle}
}
\ar[l] _{\epsilon_2}
& \displaystyle{
\frac {\C[x,y]} {\langle y^3 -x^5 \rangle}
}
\ar[l] _{\epsilon_1}
\\
(v_2\, u_1, u_1)
& (x,u_1)\ ; \ar@{|->}[l]
(x,u_1\,x)
& (x,y)
\ar@{|->}[l]
}\]
and it follows from Lemma~\ref{L:szpiro} that its adjoint is
\begin{align*}
A= (\epsilon_1) _* \langle x,u_1 \rangle \cdot
\langle x,y \rangle ^2
&= \langle x^3, x^2y, xy^2,
u_1x^2, u_1xy, u_1y^2 \rangle
\\
&= \langle x^3, x^2y, xy^2,
yx, y^2, x^4 \rangle
\\
&= \langle x^3, xy, y^2 \rangle.
\end{align*}
On the other hand, its equisingular ideal is
$
I = J =
\langle x^4,y^2 \rangle
$
by Lemma~\ref{L:I}.
Eventually, one has
\begin{equation*}
\begin{aligned}
\nu^* A &= \epsilon_2 ^* \epsilon_1 ^* \langle x^3, xy, y^2 \rangle \\
&= \epsilon_2 ^* \langle x^3, u_1 x^2, u_1^2 x^2 \rangle
= \epsilon_2 ^* \langle x^3, u_1 x^2 \rangle \\
&= \langle v_2^3 u_1^3, v_2^2 u_1^3 \rangle
= \langle v_2^2 u_1^3 \rangle = \langle v_2 ^8 \rangle
\end{aligned}
\qquad \text{and} \qquad
\begin{aligned}
\nu^* I &= \epsilon_2 ^* \epsilon_1 ^* \langle x^4, y^2 \rangle \\
&= \epsilon_2 ^* \langle x^4, u_1^2 x^2 \rangle \\
&= \langle v_2 ^4 u_1 ^4, v_2 ^2 u_1 ^4 \rangle
= \langle v_2 ^2 u_1 ^4 \rangle = \langle v_2 ^{10} \rangle,
\end{aligned}
\end{equation*}
so that $\nu^* A \neq \nu^* I$, and indeed the $E_8$ singularity is
non--immersed. 
\end{proof}

\begin{complimentary-computation}
\textbullet\
We now consider an $A _{2n+r-1}$ singularity, $r=0$ or $1$ and $n\geq 1$
(i.e., the equation is $y^2=x ^{2n+r}$).
This is resolved with the series of $n$ blow--ups
\[\xymatrix@R=1mm@C=7mm{
\displaystyle{
\frac {\C[x,u_{n}]} {\langle u_{n} ^2 - x ^{r} \rangle}
}
& \displaystyle{
\frac {\C[x,u_{n-1}]} {\langle u_{n-1} ^2 - x ^{2+r} \rangle}
}
\ar[l] _ (.52) {\epsilon_n}
& \cdots \ar[l]
& \displaystyle{
\frac {\C[x,u_1]} {\langle u_1 ^2 - x^{2(n-1)+r} \rangle}
}
\ar[l] _ (.65) {\epsilon_2}
& \displaystyle{
\frac {\C[x,y]} {\langle y^2 -x^{2n+r} \rangle}
}
\ar[l] _(.42) {\epsilon_1}
\\
(x, u_n\, x)
& (x, u_{n-1}) \ar@{|->}[l]
& (x, u_2\, x) \ar @{} [l] |{\displaystyle{\cdots}}
& (x,u_1)\ ; \ar@{|->}[l]
(x,u_1\,x)
& (x,y)
\ar@{|->}[l]
}\]
and consequently 
\begin{align*}
A &= (\epsilon _{n-1,1}) _* \langle x, u_{n-1} \rangle \cdot
(\epsilon _{n-2,1}) _* \langle x, u_{n-2} \rangle \cdots
\langle x, y \rangle 
\\
&= (\epsilon _{n-2,1}) _* \langle x^2, u_{n-2} \rangle \cdots
\langle x, y \rangle 
\\
&\hspace{2mm} \vdots \\
&= \langle x^n, y \rangle .
\end{align*}
On the other hand $I=J= \langle x ^{2n+r-1}, y \rangle$,
and eventually
\begin{equation*}
\begin{aligned}
\nu^* A &= \epsilon_n ^* \cdots \epsilon_1 ^* \langle x^n, y \rangle \\
&= \epsilon_n ^* \cdots \epsilon_2 ^* \langle x^n, u_1 x \rangle \\
&\hspace{2mm} \vdots \\
& 
= \langle x^n, u_n x^n \rangle
\end{aligned}
\quad \qquad \text{and} \quad \qquad
\begin{aligned}
\nu^* I &= \epsilon_n ^* \cdots \epsilon_1 ^* 
\langle x^{2n+r-1}, y \rangle \\
&= \epsilon_n ^* \cdots \epsilon_2 ^* \langle x^{2n+r-1}, u_1 x \rangle \\
&\hspace{2mm} \vdots \\
& 
= \langle x^{2n+r-1}, u_n x^n \rangle.
\end{aligned}
\end{equation*}
Thus
\[
\nu^* I = 
\begin{cases}
\langle x^n \rangle & \text{if}\ r=0 \\
\langle u_n ^{2n+1} \rangle & \text{if}\ r=1
\end{cases}
\qquad \text{whereas} \qquad
\nu^* A
 = 
\begin{cases}
\langle x^n \rangle & \text{if}\ r=0 \\
\langle u_n ^{2n} \rangle & \text{if}\ r=1,
\end{cases}
\]
whence $\nu^* A \neq \nu^* I$ if and only if $r=1$,
\ie if and only if the singularity is non--immersed.

\bigskip
\textbullet\
Similarly enough, the $D _{2n+2+r}$ singularity
($n \geq 1$, $r=0$ or $1$) is resolved in $1+(n-1)$ blow--ups.
Write the first one
\[
\Proj
\frac {\C[x,y][u,v]}
{\langle v\,(u^2 - v^2\,x^{2(n-1)+r}), ux-vy \rangle}
\xrightarrow{\epsilon_0}
\operatorname{Spec}
\frac {\C[x,y]}
{\langle x(y^2 - x^{2n+r}) \rangle},
\]
the $\operatorname{Proj}$ construction being done with respect to the
grading defined by letting $\deg x = \deg y =0$ and 
$\deg u = \deg v =1$.
It follows from Lemma~\ref{L:szpiro} and the computations already
performed for the $A _\mu$ singularities that
\begin{align*}
A = (\epsilon_0)_* \langle u, x^{n-1} \rangle
\langle x,y \rangle ^2
&= \langle
ux^2, uxy, uy^2,
x ^{n+1}, x^n y, x^{n-1} y^2
\rangle \\
&= \langle
xy, y^2, x ^{2n-1+r} y,
x ^{n+1}, x^n y, x^{n-1} y^2
\rangle \\
& = \langle
x ^{n+1}, xy, y^2
\rangle,
\end{align*}
while
\[
I = J =
\langle y^2 - (2n+1+r) x^{2n+r}, xy \rangle.
\]
The pull--backs of $A$ and $I$ on the (smooth) branch $v=0$ are
always equal, namely one has
\[
\left. \epsilon_0 ^* A \right| _{v=0}
= \left. \epsilon_0 ^* I \right| _{v=0}
= \langle y^2 \rangle.
\]
On the other hand, writing the $n-1$ remaining blow--ups
\[\xymatrix@R=1mm@C=7mm{
\displaystyle{
\frac {\C[x,s_{n-1}]} {\langle s_{n-1} ^2 - x ^{r} \rangle}
}
& \cdots \ar[l] _ (.35) {\epsilon _{n-1}}
& \displaystyle{
\frac {\C[x,s_1]} {\langle s_1 ^2 - x^{2(n-2)+r} \rangle}
}
\ar[l] _ (.65) {\epsilon_2}
& \displaystyle{
\frac {\C[x,u]} {\langle u^2 -x^{2(n-1)+r} \rangle}
}
\ar[l] _(.47) {\epsilon_1}
\\
(x, s_{n-1}\, x) 
& (x, s_{n-2}\, x) \ar@{|->}[l]
& 
(x,s_1\,x)
\ar @{} [l] |{\displaystyle{\cdots}}
& (x,u)
\ar@{|->}[l]
}\]
one computes the pull--back(s) on the other branch(es) 
\[
\begin{aligned}
\nu^* A &= \epsilon_{n-1} ^* \cdots \epsilon_1 ^* \epsilon_0 ^*
\langle x^{n+1}, xy, y^2 \rangle \\
&= \epsilon_{n-1} ^* \cdots \epsilon_1 ^* 
\langle x^{n+1}, ux^2 \rangle \\
&= \epsilon_{n-1} ^* \cdots \epsilon_2
\langle x^{n+1}, s_1x^3 \rangle \\
&\hspace{2mm} \vdots \\
& = \langle x ^{n+1}, s _{n-1} x^{n+1} \rangle
\end{aligned}
\quad \qquad \text{and} \qquad
\begin{aligned}
\nu^* I &= \epsilon_n ^* \cdots \epsilon_1 ^*  \epsilon_0 ^*
\langle y^2 - (2n+1+r)x^{2n+r}, xy \rangle \\
&= \epsilon_n ^* \cdots \epsilon_1 ^*
\langle x^{2n+r}, ux^2 \rangle \\
&= \epsilon_{n-1} ^* \cdots \epsilon_2
\langle x^{2n+r}, s_1x^3 \rangle \\
&\hspace{2mm} \vdots \\
& = \langle x ^{2n+r}, s _{n-1} x^{n+1} \rangle.
\end{aligned}
\]
When $r=0$, $s_{n-1}$ is invertible, and one has 
$\nu^* A = \nu^* I = \langle x^{n+1} \rangle$ on the two branches
under consideration.
When $r=1$, $s _{n-1}$ is a local parameter over the singularity for
the (unique) 
branch of the normalization under consideration, and one has
$\nu ^* I = \langle s _{n-1} ^{2n+3} \rangle
\subsetneq \nu^* A= \langle s _{n-1} ^{2n+2} \rangle$
on this branch.

\bigskip
\textbullet\
The $E_6$ singularity is resolved with the single blow--up
\[\xymatrix@R=1mm@C=7mm{
\displaystyle{
\frac {\C[x,u]} {\langle u^3-x \rangle}
}
& \displaystyle{
\frac {\C[x,y]} {\langle y^3-x^4 \rangle}
}
\ar[l]_{\epsilon}
\\
(x,ux)
& (x,y)
\ar @{|->} [l]
}\]
and one has
$A=\langle x,y \rangle ^2$
and 
$I=J= \langle x^3,y^2 \rangle$.
Consequently,
\[
\nu^* I = \langle u^8 \rangle
\subsetneq
\nu^* A = \langle u^6 \rangle,
\]
and indeed $E_6$ is a cuspidal singularity.

\bigskip
\textbullet\
The $E_7$ singularity is resolved by the series of two blow--ups
\[\xymatrix@R=1mm@C=7mm{
\displaystyle{
\Proj \frac {\C[x,s][u,v]}
{\langle u\,(us-v), ux-vs \rangle}
}
& \displaystyle{
\Spec \frac {\C[x,s]} {\langle s(s^2-x) \rangle}
}
\ar @{<-} [l] _ (.4) {\epsilon_2}
& \displaystyle{
\Spec \frac {\C[x,y]} {\langle y(y^2-x^3) \rangle}
}
\ar @{<-} [l]_{\epsilon_1}
\\
& (x,sx)
& (x,y)
\ar @{|->} [l]
}\]
where again the $\Proj$ construction is done after assigning 
degree $1$ (resp. $0$) to the variables $u$ and $v$
(resp. $x$ and $s$).
One has
\begin{align*}
A
= (\epsilon_1)_* \langle s,x \rangle \cdot
\langle x,y \rangle ^2 
&= \langle sx^2,sxy,sy^2, x^3,x^2y,xy^2
\rangle \\
&= \langle xy,y^2,yx^2, x^3,x^2y,xy^2
\rangle \\
&= \langle x^3, xy, y^2 \rangle
\end{align*}
and $I=\langle x^2y, 3y^2-x^3 \rangle$,
whence
$\epsilon_1 ^* A = \langle x^3,sx^2 \rangle$
and
$\epsilon_1 ^* I = \langle s x^3, 3s^2 x^2 - x^3 \rangle$.
It follows that
\[
\left. \nu^* A \right| _{u=0}
= \left. \nu^* I \right| _{u=0}
= \langle x^3 \rangle
\quad \text{but} \quad
\left. \nu^* I \right| _{us=v}
= \langle s^6 \rangle
\subsetneq
\left. \nu^* A \right| _{us=v}
= \langle s^5 \rangle
\]
(note that the branch $us=v$ canonically identifies with
$\Spec \C[x,s] / \langle x-s^2 \rangle$),
and indeed $E_7$ has a cuspidal branch.
\end{complimentary-computation}

\begin{remark}
In any event, the tendency is that one loses information during the
pull--back, even in the case of non--immersed singularities.
For instance, in the case of an $A _{2n}$ singularity one has
$\dim _{\C} \nu^* A / \nu^* I = 1$ whereas
$\dim _{\C} A / I = n$.
\end{remark}

\subsection{Generalized divisors}
\label{s:gnlzddivs}

As explained in the previous subsection, one loses information as one
pulls back the strict inclusion $I \subsetneq A$ to the
normalization. 
In other words, in order to exploit the full strength of this 
inequality,
it is required to work directly on the singular
curve under consideration.
Here, we describe 
a possibility for doing so, namely by using the theory of generalized
divisors on curves with Gorenstein singularities
(a condition obviously fulfilled by divisors on smooth surfaces),
as developed by Hartshorne \cite{Hgnlzd-div}.
A meaningful application will be given for $K3$ surfaces 
in subsection~\ref{s:app-triv}.

\paragraph{}
Recall from \cite[\S1]{Hgnlzd-div} that generalized divisors on an
integral curve $C$ with Gorenstein singularities are defined as being
\emph{fractional ideals} of $C$,
\ie as those nonzero subsheaves of $\mathcal{K}_C$
(the constant sheaf of the function field of $C$)
that are coherent $\O_C$--modules;
note that fractional ideals of $C$ are rank $1$ torsion--free coherent 
$\O_C$--modules.
As a particular case, 
nonzero coherent sheaves of ideals of $\O_C$ are generalized divisors;
these correspond to $0$--dimensional subschemes of $C$, and
are called \emph{effective} generalized divisors.

The addition of a generalized divisor and a Cartier divisor is 
well-defined (and is a generalized divisor), but there is no
reasonable way to define the addition of two generalized divisors.
There is an inverse mapping $D \mapsto -D$, which at the level of
fractional ideals reads
$\mathcal{I} \mapsto \mathcal{I} ^{-1}
:= \lbrace f\in \mathcal{K}_C\ |\ 
f\cdot \mathcal{I} \subset \O_C \rbrace$.
Hartshorne moreover defines a \emph{degree} function on the set of
generalized divisors, which in the case of a
$0$--dimensional subscheme $Z$ coincides with the length of $\O _Z$.
He then shows that both the Riemann--Roch formula and Serre duality
hold in this context.

\paragraph{}
\label{p:gnlzd-linsys}
Let $Z$ be a generalized divisor on $C$, and $\O_C (Z)$ the inverse of
the fractional ideal corresponding to $Z$. The projective space of
lines in $\H^0 (\O_C (Z))$ is in bijection with the set $|Z|$
of effective generalized divisors linearly equivalent to $Z$.
A point $p \in C$ is a base point of $|Z|$ if
$p \in \operatorname{Supp} Z'$ for every $Z' \in |Z|$. 
One has to be careful that $\O_C (Z)$ may be generated by global
sections even though $|Z|$ has base points, 
and that it is in general not possible to associate to $|Z|$ a
base--point--free linear system by subtracting its base locus, the
latter 
being a generalized divisor,
see \cite[p.378--379 and Ex.~(1.6.1)]{Hgnlzd-div}.

\paragraph{}
Let $C$ be an integral curve in a smooth surface $X$, 
and $\xi$ its class in $\NS(X)$.
The adjoint and equisingular ideals $A$ and $I$ of $C$ 
define two effective generalized divisors on $C$, which we shall denote
respectively by $\Delta$ and $E$.
As a reformulation of Proposition~\ref{P:notnodal}, we have:
\begin{equation*}
C\ \text{not nodal}
\quad \iff \quad
\deg E > \deg \Delta.
\end{equation*}
Now, to argue along the lines of \ref{warn}, one has to estimate
\begin{equation*}
\h^0 \bigl(C, N _{C/X} \otimes A \bigr)
- \h^0 \bigl(C, N _{C/X} \otimes I \bigr)
= \h^0 \bigl(C, \O_C (C-\Delta) \bigr)
- \h^0 \bigl(C, \O_C (C-E) \bigr).
\end{equation*}

\begin{lemma}
\label{L:notnodal2}
If in the above situation $C$ is not nodal, then
\begin{equation*}
\h^0 \bigl(C, N _{C/X} \otimes A \bigr)
- \h^0 \bigl(C, N _{C/X} \otimes I \bigr)
> \h^0 \bigl(C, \omega_X \otimes \O_C (\Delta) \bigr)
- \h^0 \bigl(C, \omega_X \otimes \O_C (E) \bigr).
\end{equation*}
\end{lemma}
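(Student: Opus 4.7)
The plan is to translate both sides of the inequality into Euler characteristics of generalized divisors on the Gorenstein curve $C$, and then to apply Riemann--Roch in order to reduce the claim to the strict inequality $\deg E > \deg \Delta$. By definition of the generalized divisors $\Delta$ and $E$, one has $A = \O_C(-\Delta)$ and $I = \O_C(-E)$ as fractional ideals; tensoring with the Cartier line bundles $N_{C/X} = \O_C(C)$ and $\omega_X|_C = \O_C(K_X)$, I obtain
\[
N_{C/X} \otimes A = \O_C(C - \Delta), \qquad N_{C/X} \otimes I = \O_C(C - E),
\]
and likewise $\omega_X \otimes \O_C(\Delta) = \O_C(K_X + \Delta)$ and $\omega_X \otimes \O_C(E) = \O_C(K_X + E)$.

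Since $X$ is a smooth surface, adjunction gives $\omega_C = \O_C(K_X + C)$, so Serre duality for generalized divisors on a Gorenstein curve (\cite{Hgnlzd-div}) yields
\[
\h^1(C, \O_C(C - \Delta)) = \h^0(C, \omega_X \otimes \O_C(\Delta)),
\]
and likewise for $E$. Combining this with the Riemann--Roch formula $\chi(\O_C(D)) = \deg D + 1 - p_a(C)$, which is also valid for generalized divisors \cite{Hgnlzd-div}, a direct computation shows that the left-hand side minus the right-hand side of the claimed inequality equals
\[
\chi(\O_C(C - \Delta)) - \chi(\O_C(C - E)) = \deg E - \deg \Delta.
\]

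It thus remains to check that $\deg E > \deg \Delta$, which is exactly where the non-nodality hypothesis enters. By Proposition~\ref{P:notnodal}, the assumption that $C$ is not nodal forces $I_p \subsetneq A_p$ at some singular point $p \in C$; hence $A/I$ is a nonzero coherent $\O_C$-module of finite length, and $\deg E - \deg \Delta = \dim_\C(A/I) > 0$. I do not anticipate any serious obstacle in carrying out this plan: the only point requiring care is that addition of a Cartier and a generalized divisor be well-defined and that Riemann--Roch and Serre duality hold in the generalized setting, all of which is furnished by Hartshorne's framework since planar singularities on a reduced curve are automatically Gorenstein.
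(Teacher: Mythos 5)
Your proposal is correct and follows essentially the same route as the paper: the authors likewise combine the Riemann--Roch formula, Serre duality and adjunction (all valid for generalized divisors on the Gorenstein curve $C$ by Hartshorne's theory) to show that the difference of the two sides equals $\deg E - \deg\Delta$, and then invoke the reformulation of Proposition~\ref{P:notnodal} stating that $C$ not nodal is equivalent to $\deg E > \deg\Delta$. Your additional identification $\deg E - \deg\Delta = \dim_{\C}(A/I)$ is a correct (if slightly more explicit) way of phrasing that last step.
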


\begin{proof}
By the Riemann--Roch formula together with Serre duality and the
adjunction formula, 
we have
\begin{multline*}
\big[
\h^0 \bigl(C, N _{C/X} \otimes A \bigr)
- \h^0 \bigl(C, \omega_X \otimes \O_C (\Delta) \bigr)
\big]
- \big[
\h^0 \bigl(C, N _{C/X} \otimes I \bigr)
- \h^0 \bigl(C, \omega_X \otimes \O_C (E) \bigr)
\big]
\\ 
= -\deg \Delta + \deg E.
\end{multline*}
\end{proof}

\paragraph{}
\label{p:rigid}
As a sideremark (which will nevertheless be useful in our application
to $K3$ surfaces), note that
$\deg \Delta = p_a(C)-g(C)$, the so--called $\delta$--invariant of the
curve $C$.
Moreover, it follows from Serre duality and Lemma~\ref{L:indep-adj}
that
\[\h^1(C,\O_C (\Delta))=
\h^0(C,\omega_C (-\Delta))=
g(C).\]
The Riemann--Roch formula then tells us that 
$\h^0(\O_C(\Delta))=1$,
\ie $\Delta$ is a rigid (generalized) divisor.

\begin{remark}
\label{r:basepoints}
The linear system $|N_{C/X} \otimes A|$ has almost always base points.
To see why, consider the typical case when $C$ has an ordinary 
$m$-uple point $p$ and no further singularity.
Then it follows from Example~\ref{ex:m-uple0} that 
$|N_{C/X} \otimes A|$ consists of those effective generalized divisors
linearly equivalent to $N_{C/X} -(m-1)p$.
Now, every effective divisor linearly equivalent to $N_{C/X}$ and
containing $p$ has to contain it with multiplicity $\geq m$,
so that $|N_{C/X} \otimes A|$ has $p$ as a base point.
\end{remark}

\section{Applications}
\label{S:apps}

\paragraph{}
Historically, the first instance of Problem~\ref{Pb:main}
to be studied was that of curves in the projective plane, by Zariski
on the one hand, and by Arbarello and Cornalba on the other.
In this situation, the parametric approach of 
Section~\ref{S:parametric} can be efficiently applied.

Usually, inequality~\ref{c:dim} of Theorem~\ref{T:nocusp}
is obtained from the estimate
    \begin{equation}
\label{E:g^2_d}
    \dim \bigl(\mathcal{G}^2_d \bigr) \ge 3d+g-9
    \end{equation}
proved in \cite{AC80}, where $\mathcal{G}^2_d$ is the moduli space
of pairs $(C,V)$ consisting of a genus $g$ (smooth projective) 
curve $C$ and of a $g^2_d$ on $C$ (i.e., $V$ is a degree $d$ linear
system of dimension $2$ on $C$),
together with the fact that the group of projective
transformations of the plane has dimension $8$. 
As a sideremark, note that 
\[
3d+g-9 = 
\dim {\cal M}_g
+\rho(2,d,g),
\]
where ${\cal M}_g$ is the moduli space of genus $g$ curves,
and $\rho(r,d,g)= g-(r+1)(g+r-d)$ is the Brill-Noether number
(see \cite[p.159]{ACGH}).

In subsection~\ref{s:app-rat} below, 
we deduce inequality~\ref{c:dim} of \ref{T:nocusp}
in a more abstract nonsensical way from (\ref{E:mor3}), 
which actually shows that we have equality in \ref{c:dim}
of \ref{T:nocusp}, hence
also in \eqref{E:g^2_d}, even when $\rho<0$.

\subsection{Applications to rational surfaces}
\label{s:app-rat}

We now collect various applications of 
Corollary~\ref{C:nocusp1} that settle Problem~\ref{Pb:main} for common
rational surfaces.
The paper \cite{KS13} contains results going the same direction.

\paragraph{}
We make repeated use of the
elementary fact that any line bundle of
degree $\geq 2g$ on a smooth genus $g$ curve is non--special and
globally generated.

\begin{corollary}
[{\cite{AC80,oZ82}}]
\label{C:ACZ0}
The general element of the Severi variety  $\V_{d,g}$ of 
integral plane curves of degree $d$ and genus $g$ is 
a nodal curve.
\end{corollary}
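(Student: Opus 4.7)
The plan is to apply Corollary~\ref{C:nocusp1} with $X = \P^2$. Fix an irreducible component $V$ of $V_{d,g}$, a general point $[C] \in V$, and the normalization $\varphi: \bar{C} \to \P^2$. The cases $d = 1$ and $g = p_a(d) = \binom{d-1}{2}$ are trivial since the members of $V$ are then smooth, so assume $d \geq 2$. Set $L = \omega_{\bar{C}} \otimes \varphi^*\omega_{\P^2}^{-1}$; since $\omega_{\P^2}^{-1} = \O_{\P^2}(3)$, one computes
\[
\deg L = (2g - 2) + 3d \ge 2g + 4.
\]
A line bundle of degree at least $2g$ on a smooth curve of genus $g$ is automatically non-special and base-point-free, and moreover the inequality $\deg L \geq 2g + 2$ is precisely \eqref{E:nocusps1}. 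Hence all hypotheses of Corollary~\ref{C:nocusp1} are met, and $C$ is nodal.

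The one point requiring care is the implicit bound \ref{c:dim} of Theorem~\ref{T:nocusp}, namely $\dim V \geq h^0(L)$, which Corollary~\ref{C:nocusp1} extracts from non-speciality via the chain
\[
\dim V = \dim R_\varphi \geq \chi(N_\varphi) = h^0(L).
\]
The first equality is Lemma~\ref{L:comp1} applied to $[C]$, which is legitimate because the simultaneous resolution of the universal family over (the normalization of) $V$, provided by Theorem~\ref{T:simult}, gives, via property (c) of the modular family, a classifying morphism from an \'etale cover of $V$ to $M_g(\P^2)$ whose image dominates a unique irreducible component. The middle inequality is the lower bound in \eqref{E:mor3}, and the final equality combines \eqref{E:mor2.1} with the vanishing $h^1(L) = 0$. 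Pairing this with the upper bound in \eqref{E:mor3} forces equality throughout, yielding $\dim V = h^0(L) = 3d + g - 1$ as announced at the start of this subsection, and valid regardless of the sign of the Brill--Noether number $\rho(2,d,g)$.

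There is really no hard step: the only thing one has to be alert to is not to short-circuit Lemma~\ref{L:comp1} and the resulting identification $\dim V = \dim R_\varphi$, for otherwise hypothesis \ref{c:dim} of Theorem~\ref{T:nocusp} would not be at one's disposal. The argument uses the ambient surface $\P^2$ only through $\omega_{\P^2}^{-1} = \O_{\P^2}(3)$ being sufficiently positive that $\omega_{\bar C}\otimes\varphi^\ast\omega_{\P^2}^{-1}$ is unconditionally well-behaved; by contrast, for surfaces with less positive anticanonical class the non-speciality and base-point-freeness of this bundle will have to be examined case by case, which foreshadows the subsequent parts of Theorem~\ref{T:main}.
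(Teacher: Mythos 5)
Your proposal is correct and follows essentially the same route as the paper: the paper's proof likewise dismisses $d=1$ as trivial, computes $\deg(\omega_{\bar C}\otimes\varphi^*\omega_{\P^2}^{-1})=2g-2+3d\ge 2g+2$ for $d\ge 2$, and invokes Corollary~\ref{C:nocusp1}. Your additional unpacking of hypothesis \ref{c:dim} via Lemma~\ref{L:comp1} and \eqref{E:mor3} just reproduces the content of the proof of Corollary~\ref{C:nocusp1} and the remark in \S\ref{S:apps} that $\dim V = 3d+g-1$, so it is consistent with, not divergent from, the paper.
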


\begin{proof}
This is trivial for $d=1$, and if $d \geq 2$, one has
for $[C]\in \V_{d,g}$
\[
\deg(\omega_{\bar{C}}\otimes\varphi^*\omega_X^{-1}) 
= 2g-2+3d
\ge 2g+2,
\]
whence Corollary~\ref{C:nocusp1} applies.
\end{proof}

\begin{corollary}\label{C:ACZ-DP}
Let $X$ be a Del Pezzo surface of degree $d$, i.e. $-K_X$ is ample and $K_X^2=d$.
Then for every $n\geq 1$, the general element $C$ of any irreducible
component of  $\V_{-n K_X,g}$ is nodal,
unless $dn \leq 3$.
In any event, $C$ is immersed 
unless $d=n=1$ and $g=0$.
\end{corollary}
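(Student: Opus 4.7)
The plan is to mimic the proof of Corollary \ref{C:ACZ0} by applying Corollary \ref{C:nocusp1} uniformly, the Del Pezzo hypothesis intervening only through the positivity of $-K_X$ (which guarantees that $\omega_{\bar C}\otimes\varphi^*\omega_X^{-1}$ has reasonably high degree). Let $[C]\in V_{-nK_X,g}$ be general and let $\varphi:\bar C\to X$ be its normalization. The first step is the numerical computation
\[
\deg\bigl(\omega_{\bar C}\otimes \varphi^*\omega_X^{-1}\bigr)
=(2g-2)-C\cdot K_X = 2g-2+n K_X^2 = 2g-2+nd,
\]
obtained from $\deg \varphi^*\omega_X = \varphi_*\bar C\cdot K_X=-nK_X\cdot K_X$ via the projection formula.

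I would then split into three cases according to the size of $nd$. If $nd\ge 4$, then $\deg(\omega_{\bar C}\otimes \varphi^*\omega_X^{-1})\ge 2g+2$, which is precisely the numerical hypothesis \eqref{E:nocusps1} of Corollary \ref{C:nocusp1}; since any line bundle of degree $\ge 2g$ on a smooth genus $g$ curve is non-special and globally generated, Corollary \ref{C:nocusp1} applies and yields that $C$ is nodal. If $nd\in\{2,3\}$, then $\deg(\omega_{\bar C}\otimes \varphi^*\omega_X^{-1})\ge 2g$, so the bundle is still non-special and base-point-free and Corollary \ref{C:nocusp1} yields the weaker conclusion that $C$ is immersed (but does not suffice to force nodality, which is consistent with the fact that the statement excludes nodality precisely for $nd\le 3$).

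Finally, the case $nd=1$ reduces to $d=n=1$, for which adjunction gives $p_a(-K_X)=1$, so that $g\in\{0,1\}$. The value $g=0$ is exactly the one excluded in the statement, and for $g=1$ one has $g=p_a$, so that a general $[C]\in V_{-K_X,1}$ is already smooth and hence trivially immersed. This completes the case analysis. There is no real obstacle here: the argument is a straightforward numerical application of Corollary \ref{C:nocusp1}, and the boundary cases $nd\le 3$ correspond precisely to those for which $\omega_{\bar C}\otimes \varphi^*\omega_X^{-1}$ fails to satisfy the $2g+2$ (respectively $2g$) degree threshold. If any subtlety arises, it would be in checking that one really has $C\cdot K_X=-nd$ irrespective of which line bundle in $\mathrm{Pic}^{-nK_X}(X)$ is chosen, which is automatic because $K_X^2$ is a topological invariant; on a Del Pezzo surface $\mathrm{Pic}(X)=\mathrm{NS}(X)$ is torsion-free anyway, so this point is painless.
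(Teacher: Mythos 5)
Your proposal is correct and follows essentially the same route as the paper: compute $\deg(\omega_{\bar C}\otimes\varphi^*\omega_X^{-1})=2g-2+nd$, apply Corollary~\ref{C:nocusp1} with the thresholds $2g+2$ (nodality, $nd\ge 4$) and $2g$ (immersedness, $nd\ge 2$), and dispose of $d=n=1$ by noting that $p_a(-K_X)=1$ so the general member of the pencil $|-K_X|$ is smooth of genus $1$. The paper's proof is just a terser version of the same case analysis.
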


\begin{proof}
For $[C] \in \V_{-n K_X,g}$ we have
\[
\deg(\omega_{\bar{C}}\otimes\varphi^*\omega_X^{-1})= 2g-2+nd,
\]
which is $\geq 2g+2$ if $nd \geq 4$ 
and $\geq 2g$ if $nd \geq 2$,
so that Corollary~\ref{C:nocusp1} applies.
When $d=n=1$, we are considering the pencil $|-K_X|$,
the general member of which
is a smooth irreducible curve of genus 1.
\end{proof}

\begin{remark}
\label{R:delpezzo}
Observe that the case of $\V _{-K_X,0}$ when $X$ is a Del Pezzo surface
of degree $1$ is a true exception, as the following example
shows.

Let $D \subset \P^2$ be an irreducible cuspidal cubic, and let $X$
be the blow-up of  $\P^2$ at eight of the nine points of
intersection of $D$ with a general cubic. The proper transform $C$
of $D$ is isolated in $\V_{-K_X,0}$, and is not nodal.
In fact  $\bar{C}=\P^1$, and
\[
\h^0\left(\bar{C},\omega_{\bar{C}}\otimes\varphi^*\omega_X^{-1}\right)=
\h^0\left(\P^1,\O_{\P^1}(-1)\right)=0=
\dim\bigl(\V_{\omega_X^{-1},0}\bigr),
\]
but  $\omega_{\bar{C}}\otimes\varphi^*\omega_X^{-1}=\O_{\P^1}(-1)$ is
not globally generated,
so that Theorem~\ref{T:nocusp} does not apply.

It is remarkable that, when unlike the above situation
$X$ is $\P^2$ blown-up
at eight general enough points, all
members of $\V_{-K_X,0}$ are nodal curves.
\end{remark}

\begin{corollary}
\label{C:ACZ-Hirzebruch}
Let $X:={\bf F}_n=\P\left(\O_{\P^1}\oplus\O_{\P^1}(n)\right)$
be a Hirzebruch surface ($n \geq 0$).
For every effective $L \in \Pic X$ and 
$0 \le g \le p_a(L)$, 
the general member of every irreducible component of
$V_{L,g}$ is a nodal curve.
\end{corollary}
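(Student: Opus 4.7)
The plan is to follow verbatim the template established in the proofs of Corollaries~\ref{C:ACZ0} and \ref{C:ACZ-DP}: reduce to a bound on the degree of $\omega_{\bar C}\otimes\varphi^*\omega_X^{-1}$ and invoke Corollary~\ref{C:nocusp1}. Write $L=aC_0+bF$ where $C_0$ is the negative section (or a ruling if $n=0$) and $F$ a fiber, with the standard intersection numbers $C_0^2=-n$, $C_0\cdot F=1$, $F^2=0$ and $K_X=-2C_0-(n+2)F$.

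First I would dispose of the cases in which $V_{L,g}$ is either empty or consists of smooth curves. Using the genus formula
\[
p_a(L)=\tfrac{1}{2}\bigl(L^2+K_X\cdot L\bigr)+1 = -\tfrac{a(a-1)n}{2}+(a-1)(b-1)-1+1,
\]
a direct computation shows that $p_a(L)=0$ whenever $a\le 1$ and $L$ carries an integral curve (namely $L=F$, $L=C_0$, or $L=C_0+bF$ with $b\ge n$, and analogously for $n=0$). In these cases every integral curve of class $L$ has $\delta=p_a-g=0$, hence is smooth and trivially nodal; there is nothing to prove. So we may assume $a\ge 2$ and, when $n\ge 1$, also $b\ge an$ (the standard condition for an integral curve of class $aC_0+bF$ to exist on $\mathbf{F}_n$); for $n=0$ the condition is $a,b\ge 1$, with $a,b\ge 2$ forced by $p_a(L)\ge 1$.

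The key (and entirely elementary) computation is then
\[
-K_X\cdot L = 2a+2b-an,
\]
which under the constraints above satisfies $-K_X\cdot L\ge 2a+an\ge 2a\ge 4$ when $n\ge 1$, and $-K_X\cdot L=2a+2b\ge 4$ when $n=0$. Consequently, for any $[C]\in V_{L,g}$ with normalization $\varphi:\bar C\to X$,
\[
\deg\bigl(\omega_{\bar C}\otimes\varphi^*\omega_X^{-1}\bigr)=2g-2+(-K_X)\cdot L\ge 2g+2.
\]
In particular this line bundle on $\bar C$ is non-special and globally generated, and inequality \eqref{E:nocusps1} holds, so Corollary~\ref{C:nocusp1} applies and tells us that the general member of every component of $V_{L,g}$ is nodal.

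There is no real obstacle here; the only mild point is the bookkeeping of which effective classes actually carry integral curves on $\mathbf{F}_n$ and the observation that all classes with $a\le 1$ either have empty $V_{L,g}$ or force $p_a(L)=0$, trivializing the claim. Once this is organized, the bound $-K_X\cdot L\ge 4$ is immediate and Corollary~\ref{C:nocusp1} does the rest.
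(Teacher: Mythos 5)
Your proof is correct and follows essentially the same route as the paper: compute $\deg(\omega_{\bar C}\otimes\varphi^*\omega_X^{-1})=2g-2-K_X\cdot L$, show it is $\ge 2g+2$ outside a few classes for which $p_a(L)=0$ (so the curves are smooth and the claim is vacuous), and invoke Corollary~\ref{C:nocusp1}. The only difference is cosmetic bookkeeping: the paper writes $L=dH+kF$ with $H=E+nF$ and $d,k\ge 0$ and disposes of the two leftover cases by inspection, whereas you work in the $(C_0,F)$ basis and use the irreducibility constraint $b\ge an$ to get the bound $-K_X\cdot L\ge 4$ directly.
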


\begin{proof}
Let $E$ and $F$ be the respective linear equivalence classes of 
the exceptional section and a fibre of the ruling, and
$H=E+nF$.
It is enough to consider the case $L=dH+kF$, $d,k \geq 0$, since every
effective divisor on $X$ not containing the exceptional section
belongs to such an $|L|$.
Consider an integral curve $C \in |L|$ of genus $g$.
One has
\[
\deg \left( \omega_{\bar C} \otimes \varphi^* \omega_X^{-1} \right) =
2g-2-K_X \cdot C 
= 2g-2+dn+2d+2k,
\]
which is $\geq 2g+2$ 
(so that Corollary~\ref{C:nocusp1} safely applies),
unless
either $d=0$ and $k=1$
or $d=1$, $k=0$ and $n \leq 1$.
An elementary case by case analysis shows that the latter cases are
all trivial.
\end{proof}

\subsection{Applications to surfaces with numerically trivial
canonical bundle}
\label{s:app-triv}

We now deal with the case when $K_X \equiv 0$.
In this situation Corollary~\ref{C:nocusp1} does not apply directly
and further arguments are required.

\subsubsection{$K3$ surfaces}

Let $(X,L)$ be a polarized $K3$ surface, with $L^2=2p-2$,
$p\ge 2$, and let $0\le g \le p$.
Then $X$ is regular, and $p$ equals both
the dimension of $|L|$ and the arithmetic genus of a member of
this linear system.
Moreover, it follows from \ref{L:comp1} and \eqref{E:mor3} that
\begin{equation}
\label{ineq:estim0}
g-1 \leqslant \dim \V _{L,g} \leqslant g.
\end{equation}
In this case, the existence of deformations of projective $K3$
surfaces into non algebraic ones enables one to refine the
former dimension estimate, still by using the techniques of
\S\ref{S:morph}. This is well-known to the experts. We shall 
nevertheless prove
it here for the sake of completeness, along the lines of
\cite[Exercise II.1.13.1]{jK96} and \cite[Corollary 4]{zR93}.

\begin{proposition}\label{P:estim}
Every irreducible component
$V$ of $\V_{L,g}$ has dimension $g$.
\end{proposition}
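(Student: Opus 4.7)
The plan is to pair the upper bound $\dim V \leq g$ in \eqref{ineq:estim0} with a matching lower bound $\dim V \geq g$, obtained by exploiting the crucial fact --- specific to $K3$ surfaces --- that a very general small deformation of $X$ as a complex manifold is non--projective.

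Let $[C_0] \in V$ be general with normalization $\varphi_0 : \bar C \to X$, and let $f : \mathcal{X} \to B$ be the Kuranishi family of $X$, so that $B$ is smooth of dimension $20 = h^1(X, T_X)$ (using $h^2(X,T_X)=0$). Form the relative scheme $\pi : \mathcal{M} \to B$ of morphisms from smooth projective genus $g$ curves to the fibres of $f$; note that the image of $\pi$ is contained in the Noether--Lefschetz locus $B_L \subset B$ of those $b \in B$ for which (the flat deformation of) $L$ remains algebraic on $X_b$. By Hodge theory (Griffiths transversality applied to the VHS on $R^2 f_* \mathbb{Z}$), $B_L$ is an analytic divisor, so $\dim B_L \leq 19$. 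On the other hand, the relative analogue of \eqref{E:mor3} over $B$, combined with $\chi(N_{\varphi_0}) = g-1$ from \eqref{E:mor2.1} (since $K_X = 0$ and $\dim X = 2$), gives
\begin{equation*}
\dim_{[\varphi_0]} \mathcal{M} \geq \chi(N_{\varphi_0}) + \dim B + \dim \operatorname{Aut}(\bar C) = g + 19 + \dim \operatorname{Aut}(\bar C).
\end{equation*}
Since the fibre $\pi^{-1}(0)$ equals $M_g(X)$, we also have $\dim_{[\varphi_0]} \mathcal{M} \leq \dim B_L + \dim_{[\varphi_0]} M_g(X) \leq 19 + \dim_{[\varphi_0]} M_g(X)$, whence $\dim_{[\varphi_0]} M_g(X) \geq g + \dim \operatorname{Aut}(\bar C)$; Lemma \ref{L:comp1} then translates this into $\dim V \geq g$.

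The main obstacle is to justify the two dimension estimates carefully. The lower bound on $\dim \mathcal{M}$ requires a relative obstruction theory (the obstructions for deforming $\varphi_0$ over $B$ still lie in $H^1(\bar C, N_{\varphi_0})$, so $\mathcal{M}$ is cut out by at most $h^1(N_{\varphi_0})$ equations in a smooth space of dimension $h^0(N_{\varphi_0}) + \dim \operatorname{Aut}(\bar C) + \dim B$); the upper bound $\dim B_L \leq 19$ is the Hodge--theoretic fact that, for the very general $b \in B$, no class in $H^2(X_b, \mathbb{Z})$ of positive self--intersection survives as a $(1,1)$--class. Both ingredients are classical (cf.\ \cite[Exercise II.1.13.1]{jK96} and \cite[Corollary 4]{zR93}), and it is only their combination that yields the sharpened bound.
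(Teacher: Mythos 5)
Your proposal is correct and is essentially the paper's own argument: both reduce via Lemma~\ref{L:comp1} and \eqref{ineq:estim0} to the lower bound $\dim_{[\phi]}M_g(X)\ge g+\dim(\Aut D)$, and both obtain it from Koll\'ar's dimension estimate for a relative Hom scheme over a deformation of $X$ to non--algebraic $K3$ surfaces. The only difference is packaging: the paper takes a generic one--parameter deformation $\X\to\Delta$ (so the Hom scheme is supported over $0$ and one gains exactly $+1$), whereas you take the full $20$--dimensional Kuranishi family and subtract the $19$--dimensional Noether--Lefschetz locus --- the same bookkeeping with the same net effect.
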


\begin{proof}
Using Lemma \ref{L:comp1} and inequality \eqref{ineq:estim0}, it suffices
to prove that for every irreducible component $M$ of $M_g(X)$ 
and general $[\phi: D \to X] \in M$, one has
\begin{equation}
\label{ineq:required}
\dim_{[\phi]} M \geq g+\dim(\Aut D).
\end{equation}
We consider $\X \to \Delta$ an analytic deformation of $X$
parametrized by the disc,  
such that the fibre over $t \neq 0$ does not contain
any algebraic curve.
Then we let $\pi _g : \mathcal{D}_g \to  S_g$ be a modular family of
smooth projective curves of genus $g$ as in \ref{p:modular}, 
and
\[
M'_g ({\cal X}) := \Hom 
({\cal D}_g \times \Delta / S_g \times \Delta, 
{\cal X} \times S_g / S_g \times \Delta)
\]
By \cite[Theorem II.1.7]{jK96}, we have
\begin{align*}
 \dim_{[\phi]} M'_g ({\cal X}) 
&\ge
\chi(\varphi^*(T_X))+\dim(S\times\Delta)\\
&= [-\deg(\varphi^*K_X)+2 \chi(\O_{\bar{C}})]
+ [3g-2+\dim (\Aut D)] \\
&= g+\dim (\Aut D).
\end{align*}
By construction and functoriality, an étale cover of $M'_g ({\cal X})$
maps finite-to-one into $M_g(X)$, so the above inequality implies the
required \eqref{ineq:required}.
\end{proof}

Note that Proposition \ref{P:estim} does not imply that the
varieties $V_{L,g}$ are non-empty. If the pair $(X,L)$ is general,
this is true for $0 \leq g \leq p_a$, as a consequence of the main
theorem in \cite{chen99}.

\begin{proposition}\label{T:ACZ-K3}
For $g >0$,
the general element $C$ of
every irreducible component of $\V_{L,g}$ is immersed.
If moreover $C$ has a non-trigonal normalization,
then it is nodal.
\end{proposition}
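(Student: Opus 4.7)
The plan is to apply Theorem~\ref{T:nocusp} directly to $V \subset V_{L,g}$. Since $X$ is a $K3$ surface we have $\omega_X \cong \O_X$, so $\omega_{\bar C}\otimes \varphi^*\omega_X^{-1} \cong \omega_{\bar C}$, and the hypotheses of the theorem become conditions on the canonical sheaf of $\bar C$ alone.

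First I would check conditions \ref{c:bpf} and \ref{c:dim} needed for the immersion conclusion. Condition \ref{c:bpf}, global generation of $\omega_{\bar C}$, is trivial when $g = 1$ (since $\omega_{\bar C} \cong \O_{\bar C}$) and holds for $g \geq 2$ without further restriction: either $\bar C$ is non-hyperelliptic and $\omega_{\bar C}$ is very ample, or $\bar C$ is hyperelliptic, in which case $\omega_{\bar C}$ is the pullback of $\O_{\P^1}(g-1)$ by the hyperelliptic double cover and is therefore globally generated. Condition \ref{c:dim} requires $\dim V \geq h^0(\omega_{\bar C}) = g$, which is exactly the content of Proposition~\ref{P:estim}, with equality. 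The first part of Theorem~\ref{T:nocusp} then yields that $C$ is immersed.

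For the nodal refinement I would verify condition \ref{c:3ample} under the non-trigonality assumption. Non-trigonality of $\bar C$ is the statement that $\bar C$ carries no $g^1_3$, i.e.\ $h^0(\O_{\bar C}(A)) = 1$ for every effective divisor $A$ of degree $3$. Serre duality combined with Riemann--Roch then gives
\[
h^0(\omega_{\bar C}(-A)) = h^1(\O_{\bar C}(A)) = h^0(\O_{\bar C}(A)) + g - 4 = g - 3 = h^0(\omega_{\bar C}) - 3,
\]
which is precisely condition \ref{c:3ample}. The second part of Theorem~\ref{T:nocusp} then yields the nodal conclusion.

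The critical input is the sharp dimension equality $\dim V = g$ provided by Proposition~\ref{P:estim}; the a priori estimate $g - 1 \leq \dim V \leq g$ of \eqref{ineq:estim0} would already not suffice to activate condition \ref{c:dim}. The essential limitation, which prevents dropping the non-trigonality assumption, is that condition \ref{c:3ample} is literally equivalent to the absence of a $g^1_3$ on $\bar C$, so the parametric method is blind to trigonal or hyperelliptic normalizations. This is exactly the obstruction discussed in Remark~\ref{R:toostrong}, and circumventing it is precisely the motivation for the Cartesian approach developed in Section~\ref{S:cartesian}.
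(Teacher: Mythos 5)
Your proposal is correct and follows exactly the paper's own argument: identify $\omega_{\bar C}\otimes\varphi^*\omega_X^{-1}$ with $\omega_{\bar C}$, verify conditions \ref{c:bpf} and \ref{c:dim} of Theorem~\ref{T:nocusp} using the dimension equality $\dim V = g$ from Proposition~\ref{P:estim}, and deduce condition \ref{c:3ample} from non-trigonality. The paper states these steps more tersely, but the substance is identical.
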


\begin{proof}
We have
$\omega_{\bar{C}}\otimes\varphi^*\omega_X^{-1}=\omega_{\bar{C}}$.
This line bundle is globally generated since $g \ge 1$, and
$h^0(\omega_{\bar{C}}\otimes\varphi^*\omega_X^{-1})=g=\dim(V)$ by
Proposition~\ref{P:estim}. Therefore conditions (a) and (b) of
Theorem~\ref{T:nocusp} are satisfied and the first part follows.
If the normalization $\bar C$ is not trigonal, then condition (c)
of Theorem~\ref{T:nocusp} is also satisfied and $C$ is nodal.
\end{proof}

\begin{corollary}
\label{coro:CK-nontrigonal}
Let $(X,L)$ be a very general primitively
polarized $K3$ surface (\ie $L$ is indivisible in $\Pic X$)
with $L^2=2p-2$, $p \geq 2$, and $0 < g \le p$.
If
\begin{equation}
\label{ineq:CK}
g+ \Bigl\lfloor \frac{g}{4} \Bigr\rfloor
\big(g-2 \Bigl\lfloor \frac{g}{4} \Bigr\rfloor -2 \big)
> p,
\end{equation}
then the general element of
every irreducible component of $\V_{L,g}$ is nodal.
\end{corollary}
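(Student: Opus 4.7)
The strategy is straightforward: combine Proposition~\ref{T:ACZ-K3} with the non-trigonality criterion of Ciliberto and Knutsen~\cite{CK14} already advertised in the footnote to Theorem~\ref{T:K3}. Indeed, Proposition~\ref{T:ACZ-K3} already ensures that the general element $C$ of any irreducible component of $V_{L,g}$ is immersed, and moreover that it is actually nodal as soon as its normalization $\bar{C}$ is non-trigonal. The whole task thus reduces to establishing, under the hypothesis (\ref{ineq:CK}), the non-trigonality of the normalization of the general element of every irreducible component of $V_{L,g}$.

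For this, the plan is to invoke \cite{CK14}, which provides exactly a sufficient numerical condition for the general normalization $\bar{C}$ to have gonality $>3$ on a very general primitively polarized K3 surface. The expected form of the underlying bound is $p \geq g + k(g - 2k - 2)$, holding for a suitable range of integers $k$ as soon as the general normalization is trigonal. Since the concave quadratic $k \mapsto k(g - 2k - 2)$ attains its maximum near $k = (g-2)/4$, the choice $k = \lfloor g/4 \rfloor$ yields the sharpest such lower bound on $p$; accordingly, the hypothesis (\ref{ineq:CK}) is precisely the contrapositive of the \cite{CK14} inequality at this optimal value of $k$, and thus forces $\bar{C}$ to be non-trigonal.

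The main obstacle to a self-contained treatment is the proof of the \cite{CK14} criterion itself. The techniques involved there are Brill--Noether-type constructions on the normalization of $C$ (producing, from a hypothetical $g^1_3$, richer linear series by residuation and twisting), together with the rigidity imposed by $(X,L)$ being very general with $\Pic X = \Z \cdot L$, which severely restricts possible decompositions of divisor classes on $X$. Once this input is granted, the corollary follows in one line: (\ref{ineq:CK}) negates the \cite{CK14} bound at $k = \lfloor g/4 \rfloor$, forcing $\bar{C}$ non-trigonal, and Proposition~\ref{T:ACZ-K3} then upgrades this into nodality of the general $C \in V_{L,g}$.
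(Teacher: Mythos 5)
Your proposal is correct and follows essentially the same route as the paper: the paper's proof consists precisely of citing \cite[Thm~3.1]{CK14} to deduce from inequality~\eqref{ineq:CK} that the normalization of every $C \in |L|$ carries no $g^1_3$, and then (implicitly) applying Proposition~\ref{T:ACZ-K3} to conclude nodality. Your additional discussion of why $k = \lfloor g/4 \rfloor$ optimizes the Ciliberto--Knutsen bound is a reasonable reconstruction but is not needed once the theorem is cited in the form the paper uses.
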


\begin{proof}
By \cite[Thm~3.1]{CK14}, inequality \eqref{ineq:CK}
ensures that for every $C \in |L|$,
the normalization of $C$ does not carry any $g^1_3$.
\end{proof}

\begin{proposition}
\label{P:K3-gnlzddivs}
Let $(X,L)$ be a very general primitively
polarized $K3$ surface, with $L^2 = 2p-2$.
If $g > \frac{p}{2}$, 
then the general element of
every irreducible component of $\V_{L,g}$ is nodal.
\end{proposition}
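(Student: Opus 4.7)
The plan is to argue by contradiction, supposing that the general member $[C]$ of an irreducible component $V$ of $V_{L,g}$ is not nodal, and using the Cartesian machinery of Section~\ref{S:cartesian} together with Brill--Noether theory on the $K3$ surface $X$.

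\emph{Step 1 (pinning the dimensions).} By Proposition~\ref{P:estim} I have $\dim V = g$. Combining this with \ref{fact:equigen} and Lemma~\ref{L:indep-adj} gives
\[
g = \dim V \leq h^0\bigl(C, N_{C/X}\otimes A\bigr)
= h^0\bigl(\bar C, \omega_{\bar C}\bigr) = g,
\]
so equality holds throughout. Since moreover $T_{[C]}V \subset T_{[C]}\ES(C)$ by Proposition~\ref{P:gen-equising}, the tangent bound \ref{fact:equising} forces $h^0(C, N_{C/X}\otimes I) = g$ as well.

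\emph{Step 2 (many sections of $\O_C(E)$).} Since $\omega_X = \O_X$, one has $\omega_C = N_{C/X}$. Serre duality on the Gorenstein curve $C$ gives $h^1(\O_C(E)) = h^0(N_{C/X}\otimes I) = g$, and then Riemann--Roch yields
\[
h^0\bigl(\O_C(E)\bigr) = g + \deg E - p + 1.
\]
Because $C$ is not nodal, Proposition~\ref{P:notnodal} implies $I \subsetneq A$, so $E > \Delta$ as generalized divisors; recalling from \ref{p:rigid} that $\deg \Delta = p-g$, I get $\deg E \geq p - g + 1$ and hence $h^0(\O_C(E)) \geq 2$. Writing $\deg E = p - g + k$ with $k\geq 1$, this reads $h^0(\O_C(E)) = k+1$.

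\emph{Step 3 (Clifford).} The assumption $g > p/2$ forces $g \geq 2$, so both $h^0(\O_C(E)) \geq 2$ and $h^1(\O_C(E)) \geq 2$. Clifford's inequality for torsion-free rank-$1$ sheaves on Gorenstein curves (Hartshorne~\cite{Hgnlzd-div}) applied to $\O_C(E)$ gives $k \leq (p-g+k)/2$, equivalently $\deg E \leq 2(p-g)$, and the Clifford index
\[
\operatorname{Cliff}\bigl(\O_C(E)\bigr) \;=\; \deg E - 2k \;=\; p-g-k \;\leq\; p-g-1 \;<\; \lfloor (p-1)/2\rfloor,
\]
the last strict inequality being a direct consequence of $g > p/2$. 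Moreover, since $h^0(\O_C(E))\geq 2$, a general member of the linear system $|E|$ avoids any given singular point of $C$ and hence is supported at smooth points; pulling back by the normalization $\nu:\bar C \to C$ therefore produces a genuine $g^1_{\deg E}$ on $\bar C$ of degree at most $2(p-g) < p$.

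\emph{Step 4 (very generality) and main obstacle.} The remaining step, which is the heart of the argument, is to derive a contradiction from the very generality of $(X,L)$. The idea is to invoke a Green--Lazarsfeld/Lazarsfeld--Mukai type statement, extended to generalized divisors on Gorenstein curves in $|L|$: on a very general primitively polarized $K3$ with $\Pic X = \Z L$, any effective rank-$1$ torsion-free sheaf on an integral curve in $|L|$ whose Clifford index is strictly less than $\lfloor(p-1)/2\rfloor$ must be the restriction of a line bundle on $X$. Since $\Pic X = \Z L$ and $0 < \deg E \leq 2(p-g) < 2p-2 = \deg L|_C$, no such restriction exists, and this yields the contradiction. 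The delicate part is justifying the extension of Green--Lazarsfeld to singular curves; concretely I would construct a rank-$2$ Lazarsfeld--Mukai bundle on $X$ from the pencil $|E|$ (taken as supported in the smooth locus of $C$, so as to make classical Serre--type constructions work) and analyze its $L$-stability to produce a divisor class on $X$ incompatible with $\Pic X = \Z L$. This BN input is exactly what the introduction promises in announcing the use of ``additional results from Brill--Noether theory'' coupled with the generalized-divisor formalism of \S\ref{s:gnlzddivs}.
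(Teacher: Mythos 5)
Your Steps 1--3 are correct and follow essentially the same route as the paper: assume $C$ not nodal, use $\dim V=g$ (Proposition~\ref{P:estim}) together with $T_{[C]}V\subset T_{[C]}\ES(C)$ and Lemma~\ref{L:indep-adj} to force $\h^0(C,\omega_C(-E))=g$, and then exhibit $\O_C(E)$ as a rank~$1$ torsion--free sheaf computing a Clifford index $\deg E-2r(E)=p-g-k\le p-g-1<\lfloor(p-1)/2\rfloor$. In fact your derivation is slightly cleaner than the paper's, which instead splits into the cases $\h^1(\O_C(E))<2$, $\h^0(\O_C(E))<2$ (handled via Lemma~\ref{L:notnodal2} and \ref{p:rigid}), and the remaining case where both are $\ge 2$; your direct computation $\h^1(\O_C(E))=g\ge2$ and $\h^0(\O_C(E))=k+1\ge2$ subsumes the first two cases.

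The genuine gap is Step 4. The contradiction requires knowing that $\cliff(C)=\lfloor(p-1)/2\rfloor$ for an \emph{integral, possibly singular} curve $C\in|L|$ on a very general primitively polarized $K3$, where the Clifford index is taken over all rank~$1$ torsion--free sheaves as in the paper's definition. You correctly identify this as the heart of the matter, but you do not prove it; you only sketch a programme (a Lazarsfeld--Mukai bundle built from a pencil supported in the smooth locus). That programme as sketched is shaky: the intermediate claim that a general member of $|E|$ avoids the singular points is unjustified -- generalized linear systems routinely have base points precisely at the singularities (cf.\ Remark~\ref{r:basepoints}), and $E$ is by construction supported there -- and passing to a $g^1_d$ on the normalization $\bar C$ (of genus $g$, not $p$) gives no contradiction, since nothing controls the gonality of $\bar C$; losing information under pull--back to $\bar C$ is exactly what the generalized--divisor formalism is designed to avoid. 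The paper closes this gap by citing the literature: the extension of the Green--Lazarsfeld result \cite{GL} to torsion--free sheaves on integral singular curves in $|L|$ is the content of \cite{BFT}, and it is this combination that yields $\cliff(C)=\lfloor\frac{p-1}{2}\rfloor$. So your argument is complete modulo precisely that citation; as written, the key Brill--Noether input remains an unproven assertion.
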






\paragraph{}
Before we prove this, recall that the 
\emph{Clifford index} of an integral projective curve $C$ of 
arithmetic genus $p \geq 2$ is
\begin{equation*}
\cliff(C):= \min \biggl\lbrace
\Bigl\lfloor \frac{p-1}{2} \Bigr\rfloor,\
\min\nolimits _{\bigl\{ A \in 
\operatorname{\overline{Pic}} C\ \text{s.t.}\  
r(A)\geq 1\ \text{and}\ r(K_C-A)\geq 1 \bigr\}}
\bigl( \deg A -2r(A) \bigr)
\biggr\rbrace,
\end{equation*}
where $\operatorname{\overline{Pic}} C$ is the set of rank $1$ 
torsion--free sheaves on $C$, and
$r(M)$ stands for $\h^0(M)-1$ for any $M \in 
\operatorname{\overline{Pic}} C$.
The bigger $\cliff(C)$ is, the more general $C$ is with respect to 
Brill--Noether theory.

\bigskip
\begin{proof}[Proof of Proposition~\ref{P:K3-gnlzddivs}]
We apply the strategy described in \ref{warn}, and 
circumvent the issue therein underlined by using the theory of
generalized divisors on singular curves, as recalled in 
subsection~\ref{s:gnlzddivs}
(we freely use the notations introduced in that subsection):
let $V$ be an irreducible component of $\V _{L,g}$,
$[C]$ a general member of $V$, and assume by contradiction that $C$ is
not nodal. 
We have
\[
\dim V \leq \dim \ES(C)
= \h^0 \bigl(C,I_C \otimes \O_C(C) \bigr),
\]
and we shall show that
\begin{equation}
\label{ineq:AF}
\h^0 \bigl(C,I_C \otimes \O_C(C) \bigr)<
\h^0 \bigl(C,A_C \otimes \O_C(C) \bigr)=g,
\end{equation}
thus contradicting Proposition~\ref{P:estim} and ending the proof
(the right--hand--side equality in \eqref{ineq:AF} comes from 
Lemma~\ref{L:indep-adj}).

If $\h^1 \bigl(C,  \O_C (E) \bigr) <2$, then
\begin{equation*}
\h^0 \bigl( I \otimes \O_C(C) \bigr)
= \h^0 \bigl( \omega_C(-E) \bigr)
= \h^1 \bigl( \O_C (E) \bigr) \leq 1
< g,
\end{equation*}
and \eqref{ineq:AF} holds.
If on the other hand $\h^0 \bigl(C,  \O_C (E) \bigr) <2$, then
\eqref{ineq:AF} still holds, since Lemma~\ref{L:notnodal2} together
with \ref{p:rigid} yield
\begin{equation*}
\h^0 \bigl(C, N _{C/X} \otimes A) \bigr)
- \h^0 \bigl(C, N _{C/X} \otimes I \bigr)
> 1 - \h^0 \bigl(C,  \O_C (E) \bigr).
\end{equation*}
For the remaining of the proof, we therefore assume that both
$\h^0 \bigl(C,  \O_C (E) \bigr)$ and
$\h^1 \bigl(C,  \O_C (E) \bigr)$ are $\geq 2$.

Now, being $(X,L)$ a very general primitively polarized $K3$ surface,
and $C \in |L|$ an integral curve of geometric genus $g \geq 2$,
it follows from \cite{BFT} together with \cite{GL} that the Clifford
index of $C$ is that of a general smooth curve of genus $p$,
\ie $\cliff(C)= \lfloor \frac{p-1}{2} \rfloor$.
This implies
\begin{equation*}
p+1 - \bigl[
\h^0\bigl( \O_C(E) \bigr) + \h^0\bigl( \omega_C(-E) \bigr)
\bigr] 
= \deg E -2r(E) 
\geq \Bigl\lfloor \frac{p-1}{2} \Bigr\rfloor,
\end{equation*}
hence
\[
\h^0\bigl( \omega_C(-E) \bigr)
\leq \frac{p}{2} +2 -\h^0\bigl( \O_C (E) \bigr)
\leq \frac{p}{2},
\]
so that \eqref{ineq:AF} again holds.
\end{proof}

\begin{remark}
In a private correspondence concerning a previous version of this
paper, 
X.~Chen has shown (using methods completely different from ours) 
that the statement of Proposition~\ref{P:K3-gnlzddivs} holds
more generally without the limitation $g > \frac{p}{2}$.
\end{remark}

\begin{remark}
\label{R:K3} 
The case $g=0$ in Proposition~\ref{T:ACZ-K3} is a true exception. For
example, there exist irreducible rational plane quartic curves
with one cusp and two nodes. Pick a general such curve: then there
is a nonsingular quartic surface in $\P^3$ containing it as a
hyperplane section.
\end{remark}

On the other hand, it seems fairly reasonable to formulate the
following conjecture, which predicts that the case $g=0$ holds for
very general $(X,L)$. It is of particular interest in the context
of enumerative geometry, in that it provides a good understanding
of the various formulae counting rational curves on $K3$ surfaces
(see \cite{FGS99,beauville-counting}).
\begin{conjecture}
Let $(X,L)$ be a very general polarized $K3$ surface. Then all
rational curves in $|L|$ are nodal.
\end{conjecture}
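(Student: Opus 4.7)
The plan is to extend the strategy of Proposition~\ref{P:K3-gnlzddivs} to the critical case $g=0$ by varying the polarized K3 surface $(X,L)$ in the $19$-dimensional moduli space $\mathcal{K}_p$ of primitively polarized $K3$ surfaces of genus $p$. The direct Cartesian argument collapses here because $\omega_{\bar C}\otimes \varphi^*\omega_X^{-1} = \O_{\P^1}(-2)$ has no global sections, so the sandwich
\[
0 \;=\; \dim V_{L,0} \;\le\; \dim_{[C]}\ES(C) \;\le\; h^0(C,I\otimes \O_C(C)) \;\le\; h^0(C,A\otimes \O_C(C)) \;=\; 0
\]
(combining Proposition~\ref{P:estim}, Proposition~\ref{P:gen-equising}, item~\ref{fact:equising} and Lemma~\ref{L:indep-adj}) is saturated, leaving no slack inside a fixed $X$ to exploit the strict inclusion $I\subsetneq A$ guaranteed by Proposition~\ref{P:notnodal}.

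First, I would form the universal incidence variety $\mathcal{V}_0$ of triples $(X,L,C)$ with $(X,L)\in \mathcal{K}_p$ and $C\subset X$ an integral rational curve in $|L|$. By Proposition~\ref{P:estim}, the projection $\mathcal{V}_0\to \mathcal{K}_p$ is quasi-finite over a dense open subset, so $\mathcal{V}_0$ has pure dimension $19$. For each non-nodal topological type $T$ of isolated planar curve singularity, let $\mathcal{V}_0^T\subset \mathcal{V}_0$ denote the locally closed sublocus where $C$ carries at least one singularity of type $T$; the conjecture is equivalent to the bound $\dim\mathcal{V}_0^T < 19$ for every such $T$.

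To produce this bound, I would relativize Theorem~\ref{T:nocusp} over $\mathcal{K}_p$: form the relative Hom scheme $M_0(\mathcal{X}/\mathcal{K}_p)$ of morphisms from $\P^1$ to the fibers of the universal family $\mathcal{X}\to \mathcal{K}_p$, and apply Teissier's simultaneous resolution (Theorem~\ref{T:simult}) to the pullback to $\mathcal{V}_0^T$ of the universal rational-curves family. The desired codimension statement is a relative analog of Lemma~\ref{L:AC}: each node of $C$ contributes freely to the relative deformation, whereas a singularity of non-nodal type $T$ forces positive-length torsion in the normal sheaf $N_\varphi$ of the parametrization, and this torsion ought to impose a genuine linear condition on the relative deformation space that the $19$ moduli directions of $X$ cannot absorb.

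The hard part is making this relative condition count rigorous, for two reasons. First, singularities carrying a continuous modulus of equisingular deformations---such as the ordinary $m$-uple points of Example~\ref{ex:m-uple}---satisfy $\nu^*I = \nu^*A$, so the pulled-back parametric bound gives no information, and one is forced to work directly on the singular curve via the generalized divisors of \S\ref{s:gnlzddivs}. Second, one must rule out Noether--Lefschetz-type coincidences on $\mathcal{K}_p$ that might produce unexpected families of non-nodal rational curves living over a hypersurface in $\mathcal{K}_p$. A realistic intermediate target, weaker than the conjecture but probably within reach of the present methods, would be to establish the nodality of the \emph{generic} rational curve in $|L|$, by specializing $X$ to a degenerate K3 surface carrying explicit nodal rational curves in the spirit of \cite{chen99} and invoking semicontinuity of equisingular parameters along $\mathcal{V}_0$.
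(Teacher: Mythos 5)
You should first note that the statement you set out to prove is presented in the paper as a \emph{conjecture}: the authors give no proof, and the only known case is Chen's theorem \cite{chen} for indivisible $L$, obtained by a degeneration argument entirely different from your plan. So there is no proof in the paper to compare against; the question is whether your outline could close the gap, and as written it cannot.

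The decisive obstruction is that a dimension count over $\mathcal{K}_p$ carries no information in genus $0$. Since $\V_{L,0}$ is $0$-dimensional in every fibre (Proposition~\ref{P:estim}), every irreducible component of your incidence variety $\mathcal{V}_0$, and hence of every sublocus $\mathcal{V}_0^T$, has dimension at most $19$, with equality exactly when the component dominates $\mathcal{K}_p$. There is no ``expected codimension'' left for a non-nodal singularity to eat into: a dominating component of $\mathcal{V}_0^T$ would have dimension exactly $19$, the same as $\mathcal{V}_0$ itself, so proving $\dim \mathcal{V}_0^T < 19$ is literally equivalent to proving non-dominance, which is the conjecture. The mechanism you invoke --- that the torsion of $N_\varphi$ ``imposes a linear condition that the $19$ moduli directions cannot absorb'' --- is precisely the transversality statement that is open: Lemma~\ref{L:AC} only says that torsion sections do not move the image inside a \emph{fixed} $X$, and says nothing about how they pair with first-order deformations of $X$. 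Remark~\ref{R:K3} shows that $\mathcal{V}_0^T$ is genuinely nonempty (cuspidal rational hyperplane sections of smooth quartic surfaces), so the needed non-dominance is not vacuous and must be argued. Your fallback --- specialize to a degenerate $K3$ carrying explicit nodal rational curves and conclude by semicontinuity --- is in fact the route Chen takes, but ``semicontinuity of equisingular parameters'' fails in the naive direction (nodal curves can specialize to worse singularities, not the other way around); what Chen actually does is classify the possible limits of rational curves on the degenerate surface and show none of them can arise from a non-immersed or non-nodal curve on the nearby smooth fibre, and that analysis is the content your sketch does not supply.
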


This has been proved by Chen \cite{chen} in the particular case
of indivisible $L$, using a degeneration argument.

\subsubsection{Enriques surfaces}

\begin{theorem}\label{C:ACZ2}
Let $X$ be an Enriques surface and $L$ an invertible sheaf on $X$.
If $g \ge 3$, and $[C]\in \V_{L,g}$ has a non-hyperelliptic
normalization $\bar{C}$, then the general element of every
component of $\V_{L,g}$ containing $C$ has no cusps. If moreover
$\cliff(\bar C) \ge 5$ then $C$ is nodal.
\end{theorem}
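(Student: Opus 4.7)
The plan is to apply Theorem~\ref{T:nocusp} to the normalization $\varphi:\bar C\to X$ of a general element $[C]$ of an irreducible component $V$ of $V_{L,g}$ containing the given curve. Since non-hyperellipticity and Clifford index $\ge 5$ are both open conditions on families of smooth curves of fixed genus, we may assume these hold at the general element. The crucial structural remark is that on an Enriques surface $2K_X=0$, so $\eta:=\varphi^*\omega_X^{-1}\in\Pic \bar C$ is a $2$-torsion line bundle, and $\bar N_\varphi\cong \omega_{\bar C}\otimes\eta$ has degree $2g-2$, Euler characteristic $g-1$, and $\h^0=g-1+\h^0(\eta^{-1})$ by Riemann--Roch.

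For condition \ref{c:bpf} of Theorem~\ref{T:nocusp} (global generation of $\omega_{\bar C}\otimes\eta$) I would split in two cases. If $\eta\cong\O_{\bar C}$, this reduces to $\omega_{\bar C}$, which is globally generated since $g\ge 2$. If $\eta\not\cong\O_{\bar C}$, a base point would entail $\eta^{-1}\cong\O(q-p)$ with $q\neq p$, whence $2q\sim 2p$ produces a $g^1_2$, contradicting non-hyperellipticity. For condition \ref{c:dim}, Lemma~\ref{L:comp1} together with \eqref{E:mor3} yields $\dim V=\dim R_\varphi\ge\chi(N_\varphi)=g-1$, which already suffices when $\eta$ is nontrivial since then $\h^0(\omega_{\bar C}\otimes\eta)=g-1$.

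The delicate case is $\eta\cong\O_{\bar C}$, in which \ref{c:dim} requires the improved bound $\dim V\ge g$; this is the main obstacle. I would handle it via the K3 double cover $\pi:\tilde X\to X$ with covering involution $\iota$: triviality of $\varphi^*\omega_X$ is precisely the condition for $\varphi$ to lift to $\tilde\varphi:\bar C\to\tilde X$, and multiplicativity of degrees $\deg\tilde\varphi\cdot\deg(\pi|_{\tilde C})=\deg\varphi=1$ forces $\tilde\varphi$ to be birational onto its image $\tilde C$ with $\tilde C\neq\iota\tilde C$. Applying Proposition~\ref{P:estim} to the K3 surface $\tilde X$ yields a $g$-dimensional component $\tilde V$ of the corresponding equigeneric locus in $\tilde X$ containing $[\tilde C]$; the condition $\tilde C'\neq\iota\tilde C'$ being open, the natural rational map $[\tilde C']\mapsto[\pi(\tilde C')]$, $\tilde V\dashrightarrow V_{L,g}$, is generically finite (at most $2$-to-$1$) onto an irreducible $g$-dimensional subvariety of $V_{L,g}$ containing $[C]$, necessarily inside $V$ (one checks the class is preserved: $\pi_*[\tilde C']=L$). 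Theorem~\ref{T:nocusp}\ref{T:nocuspI} then delivers the immersion statement.

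Finally, for the nodal statement under $\cliff(\bar C)\ge 5$, I would verify condition \ref{c:3ample}, which by Riemann--Roch reduces to $\h^0(\eta^{-1}(A))=\h^0(\eta^{-1})$ for every effective degree-$3$ divisor $A$ on $\bar C$. In the trivial-$\eta$ case, a failure $\h^0(\O(A))\ge 2$ would produce a $g^1_3$ contributing $\cliff\le 1$ (the companion condition $r(\omega_{\bar C}(-A))\ge 1$ being automatic since $\cliff\ge 5$ forces $g\ge 11$), contradicting the hypothesis. In the nontrivial-$\eta$ case, a section of $\eta^{-1}(A)$ exhibits an effective divisor $B$ of degree $3$ with $\eta\cong\O(B-A)$; the $2$-torsion of $\eta$ then gives $2A\sim 2B$ with $2A\neq 2B$ (else $\eta\cong\O$), producing a $g^1_6$ on $\bar C$ of Clifford index $\le 4$, again a contradiction. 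Theorem~\ref{T:nocusp}\ref{T:nocuspII} then delivers nodality.
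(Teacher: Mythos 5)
Your argument is correct and follows the paper's overall strategy, namely checking conditions \ref{c:bpf}, \ref{c:dim}, \ref{c:3ample} of Theorem~\ref{T:nocusp} for the twist $\omega_{\bar C}\otimes\varphi^*\omega_X^{-1}$, but the differences are worth spelling out. The paper disposes of \ref{c:bpf} and \ref{c:3ample} by quoting \cite{LS96}: the sheaf is Prym-canonical, hence globally generated on a non-hyperelliptic curve (Lemma~(2.1) of loc.\ cit.), and for $\cliff(\bar C)\ge 5$ its image has no trisecant lines (Prop.~(2.2) of loc.\ cit.), which is exactly condition \ref{c:3ample}; your Riemann--Roch computations in effect reprove these two facts. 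It then gets \ref{c:dim} from Corollary~\ref{C:nocusp1} via the non-speciality of a Prym-canonical bundle. The substantive divergence is that non-speciality presupposes that $\eta=\varphi^*\omega_X^{-1}$ is a \emph{nontrivial} $2$-torsion bundle on $\bar C$, which the paper's proof takes for granted; as you observe, $\eta$ can be trivial (precisely when $C$ is the birational image of a non-$\iota$-invariant curve on the $K3$ double cover), and then $\omega_{\bar C}\otimes\eta=\omega_{\bar C}$ is special, so that \eqref{E:mor3} only yields $\dim V\ge g-1$ against $h^0(\omega_{\bar C})=g$. Your lift to the $K3$ cover combined with Proposition~\ref{P:estim} supplies the missing inequality $\dim V\ge g$; this case is genuinely absent from the paper's one-line ``Prym-canonical, in particular non-special'', so your treatment is the more complete of the two. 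Only two routine points should be made explicit: the lower semicontinuity of gonality and of the Clifford index in the family of normalizations over $V$, needed to transfer the hypotheses from the given $[C]$ to the general element; and, in the non-trivial-$\eta$ case of \ref{c:3ample}, that the resulting $g^1_6$ actually computes the Clifford index, i.e.\ $h^0\bigl(\omega_{\bar C}(-2A)\bigr)\ge 2$, which is automatic since $\cliff(\bar C)\ge 5$ forces $g\ge 11$.
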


\begin{proof}
The sheaf $L:=\omega_{\bar{C}}\otimes\varphi^*\omega_X^{-1}$ has
degree $2g-2$, and  it is  Prym-canonical: in particular, it is
non-special.  On a  non-hyperelliptic curve, every Prym-canonical
sheaf is globally generated  (see, e.g., \cite[Lemma (2.1)]{LS96}).
Therefore, the first part follows from Corollary \ref{C:nocusp1}.
If $\cliff(C) \ge 5$ then $\varphi_L(C)\subset \P^{g-2}$
has no trisecants, by \cite[Proposition~(2.2)]{LS96}, and
therefore condition \ref{c:3ample} of Theorem \ref{T:nocusp} is also
satisfied.
\end{proof}

\subsubsection{Abelian surfaces}

Let $(X,\xi)$ be a polarized Abelian surface, and let
$p=p_a(\xi)$. For each $[C] \in \curves^\xi_X$ we have
$\dim |C|=p-2$, so that   $\curves^\xi_X$ is a
$\P^{p-2}$-fibration over the dual Abelian surface
$\widehat{X}$. A general Abelian surface does not contain any
curve of geometric genus $\leq 1$. 
On the other hand, the arguments for
Propositions~\ref{P:estim} and \ref{T:ACZ-K3} apply
mutatis mutandis to this situation, so one has:

\begin{proposition}
Let $2 \leq g \leq p_a$, and $V$ an irreducible component of
$V^\xi_g$.
Then $\dim V = g$, and the general $[C] \in V$ corresponds to a curve
with only immersed singularities.
If moreover $C$ has non--trigonal normalization, then it is nodal.
\end{proposition}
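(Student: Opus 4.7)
The plan is to follow verbatim the arguments of Propositions~\ref{P:estim} and \ref{T:ACZ-K3}, substituting non-algebraic complex tori for non-algebraic $K3$s in the deformation-theoretic step. Since $\omega_X \cong \O_X$ on an Abelian surface, the identity $\omega_{\bar C}\otimes\varphi^*\omega_X^{-1} = \omega_{\bar C}$ holds as in the $K3$ case, and all the relevant numerical invariants coincide.

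First, I would establish $\dim V = g$. The upper bound $\dim_{[C]} V^\xi_g \leq h^0(\bar C,\omega_{\bar C}) = g$ is immediate from Proposition~\ref{fact:equigen} together with Lemma~\ref{L:indep-adj}. For the lower bound, I would deform $X$ to a non-algebraic complex torus: the locus of algebraic tori in the Kuranishi family of $X$ is a countable union of proper analytic subvarieties, so there exists an analytic family $\X \to \Delta$ over the disc with $\X_0 = X$ and $\X_t$ containing no effective curve for $t \neq 0$. Forming the relative Hom scheme $M'_g(\X)$ and invoking \cite[Theorem~II.1.7]{jK96} exactly as in the proof of \ref{P:estim} yields
\[
\dim_{[\phi]} M'_g(\X) \geq \chi(\varphi^* T_X) + \dim(S_g \times \Delta) = 2(1-g) + (3g-2+a_g) = g + a_g.
\]
Because $\X_t$ supports no curve for $t \neq 0$, the scheme $M'_g(\X)$ is set-theoretically supported over $\{0\}$, so the inequality descends to $\dim_{[\phi]} M_g(X) \geq g+a_g$, and Lemma~\ref{L:comp1} translates this into $\dim_{[C]} V^\xi_g \geq g$.

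Second, Theorem~\ref{T:nocuspI} would then yield the immersed conclusion: for $g \geq 2$ the line bundle $\omega_{\bar C}$ is globally generated (condition \ref{c:bpf}), and the dimension equality supplies condition \ref{c:dim}. Under the further hypothesis that $\bar C$ is non-trigonal, condition \ref{c:3ample} of Theorem~\ref{T:nocuspII} requires $h^0(\omega_{\bar C}(-A)) = g-3$ for every effective degree-$3$ divisor $A$ on $\bar C$; by Riemann--Roch this is equivalent to $h^0(\O_{\bar C}(A)) = 1$ for all such $A$, \ie to the absence of a $g^1_3$, which is exactly non-trigonality. Hence Theorem~\ref{T:nocuspII} applies and $C$ is nodal.

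The substantive step is the deformation argument producing $\dim V \geq g$: one needs the existence of a one-parameter non-algebraic deformation of the Abelian surface and careful book-keeping of the (not a priori projective) analytic Hom scheme in order to legitimately invoke Kollár's bound, exactly as the reader must already accept for $K3$s. Once this is granted, the parallel between Abelian and $K3$ surfaces is numerically perfect and the rest of the proof is a formal application of Theorem~\ref{T:nocusp}.
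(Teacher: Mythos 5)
Your proposal is correct and is essentially the paper's own proof: the paper literally disposes of the Abelian case by asserting that the arguments for Propositions~\ref{P:estim} and \ref{T:ACZ-K3} apply \emph{mutatis mutandis}, and your write-up is exactly that transposition (non-algebraic complex tori in place of non-algebraic $K3$s, $\chi(\varphi^*T_X)=2(1-g)$ since $K_X$ is trivial, then Theorem~\ref{T:nocusp} with $\omega_{\bar C}\otimes\varphi^*\omega_X^{-1}=\omega_{\bar C}$ and the Riemann--Roch translation of condition \ref{c:3ample} into non-trigonality). The only cosmetic difference is that you obtain the upper bound $\dim V\le g$ via Proposition~\ref{fact:equigen} and Lemma~\ref{L:indep-adj} rather than via \eqref{E:mor3}, which is an equally valid route already present in the paper.
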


Note however that, unlike the case of $K3$ surfaces, 
we do not know in general whether the varieties $V^\xi_g$
are non-empty for  $2 \leq g \leq p_a$.

For genus $2$ curves, more is known \cite[Prop.~2.2]{LS02}:
if $(X,L)$ is an Abelian surface of type $(d_1,d_2)$, then any
genus $2$ curve in $|L|$ 
has at most ordinary singularities of multiplicity $\leq
\frac{1}{2}(1+\sqrt{8d_1d_2-7})$. 
We have the following enlightening and apparently
well--known example which, among other things, shows
that this bound is sharp.

\begin{example}
\label{Ex:jacobian} Let $X$ be the Jacobian of a general genus $2$
curve $\Sigma$, and choose an isomorphism 
$X \simeq \Pic^1\Sigma$;
it yields an identification $\Sigma \simeq \Theta_\Sigma$. Denote by
$\{\Theta_\Sigma\}$ the corresponding polarization on $X$. The curve
$\Sigma$ has six Weierstrass points $w_1,\ldots,w_6$, and the divisors
$2w_i$ on $\Sigma$ are all linearly equivalent. It follows that the
image of $\Sigma \subset X$ by multiplication by $2$ is an irreducible
genus $2$ curve $C$ which belongs to the
linear system $\bigl|2^2\cdot \Theta_\Sigma \bigr|$,
and has a $6$-fold point, the latter being ordinary by
\cite[Prop.~2.2]{LS02} quoted above.

The curve $C$ and its translates are parametrized by an irreducible
(two-dimensional) component $V$ of $V^{\{4\Theta_\Sigma\}}_2$. Since
$\omega_{\bar{C}}\otimes\varphi^*\omega_X^{-1}=\omega_{\Sigma}$ is
globally generated and $\dim V=2=\h^0(\omega_\Sigma)$,
conditions (a) and (b) of Theorem \ref{T:nocusp} are satisfied.
 On the other
hand, condition (c) of Theorem \ref{T:nocusp} is clearly not
fullfilled and   $C$ is not nodal,   showing that this condition
  is not redundant.

We emphasize that this is an explicit illustration of the warning
given in \ref{warn}.
We have here (letting as usual $\nu$ denote the normalization of $C$)
\[
\nu^* \bigl| N _{C/X} \otimes A_C \bigr| 
= \bigl| \nu^* (N _{C/X} \otimes A_C) \bigr| 
= |\omega _{\bar C} |
\]
which is a base--point--free linear system on $\bar C$, 
whereas
\[
\bigl| N _{C/X} \otimes A_C \bigr| 
= \bigl| N _{C/X} \otimes I_C \bigr| 
\]
even though $I_C \subsetneq A_c$.
Observe also that $\nu^* I_C = \nu^* A_C$ by Example~\ref{ex:m-uple}.
\end{example}

\section{A museum of noteworthy behaviours}
\label{S:museum}

\subsection{Maximal equigeneric families with non-nodal
general member} 
\label{s:counterex}

The examples in this subsection are mainly intended to show that the
assumption that $\omega_{\bar{C}}\otimes \varphi^*\omega_X^{-1}$
is globally generated in Theorem~\ref{T:nocusp} is necessary.
The same goal was achieved by the examples provided in Remarks
\ref{R:delpezzo} and \ref{R:K3}, but the ones presented here are
hopefully less peculiar (e.g., the involved equigeneric families are 
in general not $0$-dimensional).

\begin{example}
\label{Ex:cayleybacharach}
{\em (a complete positive dimensional
ample linear system on a rational surface, all
  members of which have a cuspidal double point)}

The surface will be a plane blown-up at distinct points, which
will allow us the use of a Cayley-Bacharach type of argument.
Let $C_1,C_2 \subset \P^2$ be two irreducible sextics having an
ordinary cusp at the same point $s_0 \in \P^2$, with the same
principal tangent line, no other singularity, and meeting
transversely elsewhere.
Their local intersection number at $s_0$ is $(C_1\cdot C_2) _{s_0}=6$,
so we can consider $26$
pairwise distinct transverse intersection points $p_1, \dots,
p_{26} \in C_1 \cap C_2 \setminus \{s_0\}.$ Let $\pi:X \to \P^2$ be
the blow-up at $p_1, \dots, p_{26}$, and let 
$L := 6H- \sum_{1 \leq i \leq 26} E_i$, where
 $H=\pi^*\O_{\P^2}(1)$, and the $E_i$'s are the exceptional curves
of $\pi$.
 Then, since $\dim|\O_{\P^2}(6)|=27$, $|L|$ is a pencil generated by
 the proper transforms of $C_1$ and $C_2$, hence consists entirely
 of curves singular at the point ${s}= \pi^{-1}(s_0)$
and with a non ordinary singularity there. The general $C \in
|L|$ is irreducible of genus
 nine, and $\V_{L,9}$ is therefore an open subset of $|L|$, not
 containing any nodal curve.

For general $C \in |L|$, one computes
$\h^0(\omega_{\bar{C}}\otimes \varphi^*\omega_X^{-1})=1$,
which shows that the line bundle
$\omega_{\bar C}\otimes \varphi^* \omega_X^{-1}$ on $\bar C$
is not globally generated
(we let, as usual, $\bar C \to C$ be the normalization of $C$,
and $\varphi$ its composition with the inclusion $C \subset X$).
Thus condition~\ref{c:bpf} of Theorem~\ref{T:nocusp} does not hold,
while condition~\ref{c:dim} 
is verified.
As a sideremark, note that $(-K_X\cdot L)<0$ 
and $L$ is ample
(see also Remark~\ref{Ex:nonredM} above about this example).

This example can be generalized to curves with an arbitrary number of
arbitrarily nasty singularities:
simply note that the dimension of $|\O_{\P^2}(d)|$ grows as
$d^2/2$ when $d$ tends to infinity, and is therefore smaller by as
much as we want than the intersection number of two degree $d$
plane curves for $d$ big enough.
\end{example}

\begin{examples}
The forthcoming examples all are degree $n$ cyclic coverings
$\pi:X \to \P^2$, branched over a smooth curve $B \subset \P^2$
of degree $d$.
They are smooth and regular.
Let $L= \pi^* \O_{\P^2} (1)$. One has
\[
\H^0 \bigl(X, kL \bigr)
= \pi^* \H^0 \bigl(\P^2, \O _{\P^2} (k) \bigr)
\]
if and only if $k < \frac{d}{n}$.

\medskip 
\subparagraph{}
{\em (a complete ample linear system with a
codimension $1$
  equigeneric stratum, the general member of which has an
  $A_{n-1}$-double point)}

As a local computation shows, the inverse
image in $X$ of a plane curve simply tangent to $B$ is a curve with an
$A_{n-1}$--double point at the preimage of the tangency point.

It follows that for $1 \leq k < \frac{d}{n}$
there is a codimension $1$ locus in $|kL|$ that parametrizes curves
with an $A_{n-1}$-double point,
although the general member of
$|kL|$ is a smooth curve. This is an irreducible component of
$V_{kL,p_a(kL)-\lfloor d/2 \rfloor}$. 
It is superabundant,
since one expects in general that codimension $c$ equigeneric
strata are components of $V_{kL,p_a(kL)-c}$.  

\medskip 
\subparagraph{}
{\em (a complete ample linear system containing two
codimension $1$
  equigeneric strata, that respectively parametrize curves of
  genera $g_1$ and $g_2$, $g_1 \neq g_2$)}

The inverse image in
$X$ of a plane curve having a node outside of $B$ is a curve
having $n$ distinct nodes. Consequently, there is for every 
$3 \leq k < \frac{d}{n}$
a codimension $1$ locus in $|kL|$ that parametrizes
integral curves with $d$ distinct nodes. 
This is an irreducible component
of $V_{kL,p_a(kL)-d}$, and it too is superabundant.

As a conclusion, notice that the discriminant locus in $|kL|$ is
reducible, and has two of its irreducible components contained in
$V_{kL,p_a(kL)-\left[d/2\right]}$ and $V_{kL,p_a(kL)-d}$
respectively.

\medskip 
\subparagraph{}
{\em (further examples of $0$-dimensional equigeneric loci)}

\begin{inparaenum}[(i)]
\item Assume there exists a line which meets $B$ at some point $s$
with multiplicity $4$. Then its inverse image in $X$ is a curve
with a tacnode. The corresponding point of $|L|$ is a component of
$V_{L,p_a(L)-2}$, that is not superabundant.

\item Assume there exists a line $D$ which is tangent to $B$ at
three
  distinct points: its inverse image is then a curve with three
  distinct nodes.
The corresponding point of $|L|$ is a component of
$V_{L,p_a(L)-3}$: it does correspond to a nodal curve, but it is
superabundant. If one further assumes one of the tangency points
of $D$ with $B$ to be a flex of $B$, then the inverse image of $D$
is no longer nodal: it has two nodes and a cusp.
\end{inparaenum}

It should be clear by now, how these two examples can be
generalized to produce an infinite series of examples.
\end{examples}

\subsection{Singular maximal equisingular families}

Let $X$ be a smooth projective surface, $\xi \in \NS (X)$,
and $C$ an integral curve of genus $g$ and class $\xi$.
We wish to illustrate in this subsection the fact that the
local structures at $[C]$ of both $V^\xi _{g}$ and $\ES(C)$ are not as
nice as one would expect them to be by looking at their counterparts
in the deformation theory of a single planar curve singularity.
In fact, the situation is already messy in the simplest case
$X=\P^2$.

\paragraph{}
Let $p_1, \dots, p_\delta $ be the singular points of $C$,
and let $\hat C_i$ be the germ of $C$ at $p_i$ for
each $i=1, \dots, \delta$, and
\[
\xymatrix@=15pt{\hat C_i \ar[d]\ar@{^(->}[r] & \hat{\cC}_i \ar[d] \\
\mathrm{Spec}(\mathbf{C}) \ar[r]& B_i}
\]
be the \'etale semiuniversal deformation of $\hat{C}_i$
(see \cite{DH88} for a precise account on this).
By their universal properties, there exists an \'etale neighborhood
$W \to \curves ^\xi _X$ of $[C]$ such that there is a restriction morphism
\begin{equation*}
\textstyle{r: W \to \prod_i B_i.}
\end{equation*}
The general philosophy we want to underline 
can be summed up as follows.

\begin{remark}
\label{R:nonsmooth}
{\em In general, the restriction map $r$ is not smooth.}
\end{remark}
Note that both domain and codomain of $r$ are smooth.
In particular, the smoothness of $r$ is equivalent to the surjectivity
of its differential.


\paragraph{}
The equigeneric and equisingular loci inside each one of
the deformation spaces $B_i$ are known to be well-behaved 
(we refer to \cite{DH88} for details).
Among others, let us mention that the equisingular locus is
smooth, and that the general point in the equigeneric locus
corresponds to a deformation of $p_i$ in a union of nodes.
Now, the smoothness of $r$ would transport these good properties to
$\V ^\xi _{g}$ and $\ES(C)$.
In particular, it would imply the two following facts:
\begin{compactenum}
\renewcommand{\theenumi}{\normalfont (\alph{enumi})}
\item
\label{csq:ACZ}
the general point of every irreducible component $V$ of
$\V ^\xi _{g}$ corresponds to a nodal curve;
\item
\label{csq:ES}
$\ES(C)$ is smooth, and of the expected codimension in
$\curves ^\xi _X$.
\end{compactenum}

Now Remark \ref{R:nonsmooth} follows from the fact that neither
\ref{csq:ACZ} nor \ref{csq:ES} is true in general.
For \ref{csq:ACZ}, this was discussed previously in \S
\ref{s:counterex}.
On the other hand, property \ref{csq:ES} can
be contradicted in several ways:
we refer to \cite{GLS} for a discussion of these problems and
for a survey of what is known. Here we solely mention a few
examples which are relevant to our point of view.

\begin{examples}
If $C$ has $n$ nodes, $\kappa$ ordinary cusps, and no
further singularity, then $\ES (C)$ is the locus of curves 
with $n$ nodes and $\kappa$ cusps, and has expected codimension
$n+2\kappa$ in $\curves ^\xi _X$.
Here, we let $X=\P^2$, and adopt
the usual notation $V_{d,n,\kappa}$ for the scheme of irreducible
plane curves of degree $d$, with $n$ nodes, $\kappa$ cusps, and no
further singularity.

\medskip 
\subparagraph{}
(B. Segre \cite{bS29I}, see also \cite[p. 220]{oZ71})
{\em For $m \ge3$, there exists an irreducible component of
 $\V_{6m,0,6m^2}$,
 which is nonsingular and has dimension strictly larger than
the expected one.}

\medskip 
\subparagraph{}
(Wahl \cite{jW74b})
{\em The scheme $\V_{104,3636,900}$
has a non-reduced component of dimension $174 > 128= \frac{104\cdot
  107}{2}-3636-2 \cdot 900$.}

\medskip 
\subparagraph{}
{\em There also exists an equisingular stratum $V_{d,n,\kappa}$ having a
reducible connected component.}

\smallskip
The construction of the latter, which we shall now outline,
follows the same lines as that of Wahl \cite{jW74b},
and is based on the example of \cite{eS81}
(for a thorough description of which we refer to
\cite[\S 13 Exercises]{hart}).

Start from a nonsingular curve $A$ of type  $(2,3)$ on a nonsingular
quadric $Q \subset \P^3$, and let $F,G \subset \P^3$ be respectively a
general quartic and a general sextic containing $A$.
Then $F\cap G = A \cup \gamma$ where $\gamma$ is   a nonsingular curve
of degree 18 and genus 39.
As shown in \cite{eS81}, the curve $\gamma$ is obstructed. Precisely,
$[\gamma]$ is in the closure of two components of
$\mathrm{Hilb}^{\P^3}$, each consisting generically of projectively
normal, hence unobstructed, curves.

Now consider an irreducible surface $S \subset \P^3$ of degree $N
\gg 0$, having ordinary singularities along $\gamma$, and let $C
\subset \P^2$ be the branch curve of a generic projection of $S$
on $\P^2$, $d:= \deg(C)$. By \cite{CF08}, $C$ is irreducible,
and has $n$ nodes and $\kappa$ cusps as its only singularities. It
then follows from the results of \cite{jW74b}, that
$\mathrm{Hilb}^{\P^3}$ at $[\gamma]$ is smoothly related with
$\V_{d,n,\kappa}$ at $[C]$.  Therefore  $\V_{d,n,\kappa}$ is
analytically reducible at $[C]$.

In fact, one can show more precisely that $\V_{d,n,\kappa}$ is
reducible at $[C]$, by taking generic projections of irreducible
surfaces $S'$ of degree $N$ having ordinary singularities along curves
$\gamma'$ which are in a neighbourhood of $[\gamma]\in
\mathrm{Hilb}^{\P^3}$.
\end{examples}

\begin{closing}


\bigskip
\noindent\textsc{%
Institut de Math\'ematiques de Toulouse (CNRS UMR 5219),
Universit\'e Paul Sabatier,
31062 Toulouse Cedex 9, France} \\
\texttt{thomas.dedieu@m4x.org}

\bigskip
\noindent\textsc{%
Dipartimento di Matematica e Fisica,
Universit\`a Roma Tre,
L.go S.L.\ Murialdo 1,
00146 Roma, Italy} \\
\texttt{sernesi@mat.uniroma3.it}
\end{closing}

\end{document}